\def\.{{\cdot}}            
\def\<{\langle}            
\def\>{\rangle}            
\def\({\big(}              
\def\){\big)}              
\def\implies
\def\lead{\leaders\hbox to 1.5ex{\hss${.}$\hss}\hfill}
\def\arr{\hbox to 60pt{\rightarrowfill}}
\def\larr{\hbox to 60pt{\leftarrowfill}}
\newskip\aline \newskip\halfaline
\def\1{{\bf1}}
\renewcommand{\Re}{{\mathbb R}}
\newcommand{\Nat}{{\mathbb N}}
\renewcommand{\P}{\mathcal{P}}
\newcommand{\C}{{\mathcal{C}}}
\newcommand{\ua}{{\uparrow}}
\newcommand{\da}{{\downarrow}}
\newcommand{\DCPO}{\mbox{\sf DCPO }}
 \newcommand{\DOM}{\mbox{\textsf{DOM} }} 
 \newcommand{\BCD}{\mbox{\textsf{BCD} }} 
 \newcommand{\RB}{\mbox{\textsf{RB} }}
\renewcommand{\O}{{\mathcal O}}
\newcommand{\Da}{\mathord{\mbox{\makebox[0pt][l]{\raisebox{-.4ex}
                           {$\downarrow$}}$\downarrow$}}}
\newcommand{\Ua}{\mathord{\mbox{\makebox[0pt][l]{\raisebox{.4ex}
                           {$\uparrow$}}$\uparrow$}}}
\renewcommand{\Pr}{\textsf{Prob}}
\renewcommand{\Pr}[1]{{\textsf{Prob}\,#1}}
\newcommand{\SPr}[1]{{\textsf{SProb}\,#1}}
\newcommand{\Rb}{\textsf{RB}}
\newcommand{\V}{{\mathbb V}}
\newcommand{\CT}{\mathbb{CT}}
\renewcommand{\to}{\longrightarrow}
\newcommand{\ran}{\text{ran}\,} 
\newcommand{\wid}[1]{\widehat{#1}}
\newcommand{\PI}{\wid{\pi}}
\begin{document}
\begin{frontmatter}
  \title{Domains and Stochastic Processes}
\author{Michael Mislove}
\address{Department of Computer Science\\Tulane University, New Orleans, LA 70118}
\begin{abstract}
Domain theory has a long history of applications in theoretical computer science and mathematics. In this article, we explore the relation of domain theory to probability theory and stochastic processes. The goal is to establish a theory in which Polish spaces are replaced by domains, and measurable maps are replaced by Scott-continuous functions. We illustrate the approach by recasting one of the fundamental results of stochastic process theory -- Skorohod's Representation Theorem -- in domain-theoretic terms. We anticipate the domain-theoretic version of results like Skorohod's Theorem will improve our understanding of probabilistic choice in computational models, and help devise models of probabilistic  programming, with its focus on programming languages that support sampling from distributions where the results are applied to Bayesian reasoning. 
\end{abstract}
\begin{keyword}
Domain theory, random variables, 
Skorohod Representation Theorem
\end{keyword}
\end{frontmatter}
\section{Introduction}
The goal of this paper is establish a strong connection between domain theory and stochastic process theory. This follows the emergence over the past several years of random variables in domain theory as models for probabilistic choice (cf.~\cite{mislove-icalp,scott-stoch,barker}) in programming language semantics, and of probabilisitic programming semantics, an important tool for modeling programming languages that support sampling from probability distributions to study Bayesian inference (cf.~\cite{probprog1,probprog2}).  Our aim is to devise domain representations of stochastic processes that are amenable to computational analysis. 

Random variables are measurable maps $X\colon (S,\Sigma_S,\mu) \to (T,\Sigma_T)$ from a probability space $S$ to a measure space $T$. It is customary to identify a random variable $X$ with its law, $X_*\, \mu$, the push forward of the measure $\mu$ under $X$. A common setting is that of Polish spaces -- completely metrizable separable spaces, because the probability measures on a Polish space also are a Polish space in the weak topology. Our approach expands Polish spaces into domains, which allows us to represent each random variable as a Scott-continuous map. This allows us to use techniques and results from domain theory to prove analogs of results about stochastic processes. 

We illustrate our approach by reformulating Skorohod's Representation Theorem, a fundamental result in stochastic process theory, in the domain setting. 
In more detail,  Skorohod's Theorem~\cite{skorohod} states that any Borel probability measure on a Polish space $P$ is the law of a random variable $X\colon [0,1]\to P$. That is, if $\mu$ is a Borel probability measure on a Polish space $P$ and if $\lambda$ denotes Lebesgue measure on the unit interval, then there is a measurable map $X\colon [0,1]\to P$ satisfying $\mu = X_*\, \lambda$. Furthermore, if $\mu_n\to_w \mu$ in $\Pr{P}$ in the weak topology, then the random variables $X, X_n\colon [0,1]\to P$ can be chosen with laws $\mu$ and $\mu_n$, respectively, so that $X_n\to X$ almost surely wrt $\lambda$. This allows one to replace arguments about the weak convergence  of probability measures on a Polish space with arguments about almost sure convergence of measurable maps from the unit interval to the Polish space in question. 

In the domain approach, the unit interval equipped with Lebesgue measure is replaced by a suitable domain equipped with a corresponding probability measure, and the class of Polish spaces is replaced by a suitable category of domains.  In outline form, the approach relies on the following:
\begin{itemize}
\item It is well known that the Cantor set, $\C\simeq 2^\omega$, regarded as a countable product of two-point groups, is a \emph{standard probability space:} the canonical quotient map $\varphi\colon \C\to [0,1]$  is a Borel isomorphism taking Haar measure $\nu_\C$ on $\C$ to Lebesgue measure  (cf., e.g.,~\cite{brian-misl}).  
\end{itemize}
This implies there is a random variable $X\colon [0,1]\to P$ satisfying $X_*\,\lambda = \mu$ iff there is a random variable $X'\colon \C\to P$ with $X'_*\,\nu_\C = \mu$, for any probability measure $\mu$ on a Polish space $P$.
\begin{itemize}
\item We then expand the Cantor set into the Cantor tree: $\CT = \{0,1\}^\infty  = \{0,1\}^*\cup \{0,1\}^\omega$, the set of finite and infinite words over $\{0,1\}$.  This is a \emph{computational model} for $\C$, since $\CT$ is a countably based bounded complete domain when endowed with the prefix order that satisfies $\C\simeq \text{Max}\, \CT$. 
\item In fact, results in domain theory show every Polish space embeds as the space of maximal elements of some countably based bounded complete domain, $D_P$, and conversely, the space of maximal elements of any such domain is a Polish space in the relative Scott topology. 
\end{itemize}
Together, the previous results allow us to prove that for every random variable $X\colon [0,1]\to P$, there is a Scott-continuous map $f\colon\CT\to D_P$ having the same law as the random variable $X$: $f_*\, \nu_\C = (f\vert_\C)_*\,\nu_\C = X_*\,\lambda$.
\begin{itemize}
\item Just as every Polish space is the space of maximal elements of a countably based bounded complete domain, $D_P$, the family of Borel probability measures on a Polish space $P$ is the space of maximal elements of the domain of probability measures on the associated countably based bounded complete domain $D_P$. 
\item These associations are topological: the weak topology on the space $\textsf{Prob}\, \text{Max}\,D$ of  probability measures on a bounded complete domain $D$ coincides with the relative Lawson topology on the $\Pr D$, viewed as a domain (via the isomorphism with the valuations domain on $D$). In fact, the Lawson topology on $\Pr D$, for $D$ a countably based bounded complete domain, coincides with the weak topology. 
\end{itemize}
Combining these results leads to our main theorem:
\medbreak
\noindent\textbf{Theorem 1.} \emph{(Skorohod's Theorem for Bounded Complete Domains)\\ Let $D$ be a countably-based bounded complete domain, and let $\{\mu_n\}_n\in \textsf{Prob}\,D$ be a sequence of Borel probability measures satisfying $\lim_n \mu_n  = \mu \in \textsf{Prob}\,D$ in the Lawson topology (which agrees with the weak topology on probability measures). Then there are Scott-continuous maps $f,f_n\colon \CT\longrightarrow D$ satisfying $f_*\,  \nu_\C = \mu, f_{n*}\, \nu_\C = \mu_n$ for each $n$, and $f_n \longrightarrow f$ pointwise wrt to the Scott topologies. }

\begin{remark}
This result is somewhat weaker than Skorohod's Theorem for Polish spaces, since the convergence of the Scott-continuous functions $f_n$ is in the Scott topology. The family of functions $f_n$ described in the Theorem may not be directed, so we cannot conclude that $\lim f_n(x) = f(x)$ for all $x\in \CT$; we can only conclude that $\liminf_n  f_n(x) \geq f(x)$. Still, for those $x\in \CT$ for which $f(x)\in \text{Max}\, D$, we do get $\lim_n f_n(x) = f(x)$ in the Lawson topology. 
\end{remark}

Skorohod's Theorem is a corollary of Theorem 1 as follows. Any Polish space $P$ has a \emph{computational model:} a countably-based bounded complete domain $D_P$ for which $P$ is homeomorphic to the space Max$\,D_P$ of maximal elements endowed with the relative Scott topology. In fact, Max$\,D_P$ is a $G_\delta$, hence a Borel subset of $D_P$. So, if $\mu_n$ and $\mu$ are probability measures on $P$, then they are probability measures on $D_P$ concentrated on $\text{Max}\, D_P$, and so Theorem 1 implies they can be realized as laws for Scott-continuous maps $f, f_n\colon \CT\to D_P$, i.e., $f_*\,\nu_\C = \mu$ and $f_{n *}\, \nu_\C = \mu_n$ for each $n\geq 0$.
Since $\mu_n, \mu$ are concentrated on Max$\,D$, we can restrict $f_n,f$ to those points  $x\in \text{Max}\,\CT$ where $f_n(x), f(x)\in\text{Max}\, D$. It follows that $f_n(x)$ converges to $f(x)$ wrt the Lawson topology on $D$ for $\nu_\C$-almost all $x\in \text{Max}\,\CT$.

Finally, the canonical surjection $\phi\colon \C\to [0,1]$ preserves all sups and infs, so it has a lower adjoint $j\colon [0,1]\to \C$ that is a Borel isomorphism taking Lebesgue measure to $\nu_\C$. Thus $(f\circ j)_*\, \lambda = f_*\, j_*\lambda = f_*\, \nu_\C = \mu$ and similarly, $(f_n\circ j)_*\, \lambda = \mu_n$ are the random variables guaranteed by Skorohod's classic result. 

We also report the results of our research that concerns one of the longstanding problems in domain theory. The \emph{Jung-Tix Preoblem} asks whether there is a Cartesian closed category of domains for which the valuations monad is an endofunctor. We cannot answer this question, but we can contribute our knowledge of the structure of $\V\, D$ when $D$ is a complete chain:
\medbreak
\noindent\textbf{Theorem 2.} If $D$ is a countably based complete chain, then $\textsf{Prob}\, D$ and  $\textsf{SProb}\,D$ (the family of sub-probability measures on $D$) are continuous lattices. 
\medbreak
This result significantly expands our knowledge of the domain structure of the family of (sub)probability measures on a domain $D$. Indeed, up to this point, the only domains $D$ for which $\textsf{SProb}\,D$ is known to be a domain are: (i) a (rooted) tree, $T$, for which $\textsf{SProb}\,T\in$ \textsf{BCD}, the category of bounded complete domains, or (ii) a finite reverse tree $T^{rev}$, in which case $\textsf{SProb}\,T^{rev}$ is in \RB\cite{jungtix}.

The previous discussion hints at results that also are included in this research. While our interest in Skorohod's Theorem results in a focus on the probability measures, we also show that many of our results apply more broadly to the family of subprobability measures on a domain. Regarded as valuations, this is the \emph{probabilistic power domain}, a much-studied construct in domain theory. We explain the relationship between (sub-)probability measures and valuations in detail, and describe how the more general results concerning these measures follow along the same lines as the arguments we present for probability measures. In each case, the more general result can be obtained by the same proof strategy as the one for probability measures.

\subsection{Related Work}
Beginning in the mid-1990s, Abbas Edalat developed domain-theoretic approaches to a number of areas, including integration theory~\cite{edalat1}, stochastic processes~\cite{edalat2}, dynamical systems and fractals~\cite{edalat3}, and Brownian motion~\cite{bilokon}. The concept of a computational model emerged in Edalat's work on domain models of spaces arising in real analysis using the domain of compact subsets under reverse inclusion, where the target space arises as the set of maximal elements. The first paper formally presenting such a model was~\cite{edalat3}, where a domain model for locally compact second countable spaces was given. That paper presents a range of applications of the approach, including dynamical systems, iterated function systems and fractals, a computational model for classical measure theory  on locally compact spaces, and a computational generalization of Riemann integration.  Related work led to the formal ball model~\cite{edalat5} which was tailor-made for modeling metric spaces and Lipschitz functions. Further discussion of these developments occurs in our discussion of Polish spaces in Section~\ref{sec:polish} below.

Other related work concerns the development of random variable models of probabilistic computational processes. This began with~\cite{mislove-icalp}, a paper that provided a domain model for finite random variables. Further efforts saw limited success until a few years ago. The model proposed in~\cite{g-l-v-lics} turned out to be flawed, as was initially shown in~\cite{misl-anat1,misl-anat2}.   But inspired by ideas from~\cite{g-l-v-lics}, Barker~\cite{barker} devised a monad of random variables that gives an abstract model for randomized algorithms. This line of research was initiated by Scott~\cite{scott-stoch}, who showed how the $\P(\Nat)$ model of the lambda calculus could be extended naturally to support probabilistic choice with the aid of random variables $X\colon [0,1]\to \P(\Nat)$. Barker's results generalize Scott's approach by providing a model of randomized PCF that adds a version of probabilistic choice based on a random variables monad. Notably, this monad leaves important Cartesian closed categories of domains invariant -- in particular, the category \BCD of bounded complete domains, as well as the CCC \RB of retracts of bifinite domains invariant, and each enjoys a distributive law with respect to at least one of nondeterminism monads.\\[1ex] 

The rest of the paper is as follows. In the next section, we review the material we need from a number of areas, including domain theory, topology, and probability theory. Section 3 develops results about mappings from the Cantor tree to the space $\SPr{D}$ of sub-probability measures on a countably-based coherent domain $D$. Section 4 contains the main results of the paper, by first recalling the development of Polish spaces as computational models, and then presenting the main theorems.  Section 5 summarizes what's been proved, and discusses future work. 

\section{Background}\label{sec:background}
In this section we present the background material we need for our main results. 
\subsection{Domains}\label{subsec:domains}
Our results rely fundamentally on domain theory. Most of the results that we quote below can be found in \cite{abrjung} or \cite{comp}; we give specific references for those that appear elsewhere. 

To start, a \emph{poset} is a partially ordered set. A poset is \emph{directed complete} if each of its directed subsets has a least upper bound, where a subset $S$ is \emph{directed} if each  finite subset of $S$ has an upper bound in $S$. A directed complete partial order is called a \emph{dcpo}. 
The relevant maps between dcpos are the monotone maps that also preserve suprema of directed sets; these maps are usually called \emph{Scott continuous}. 

Restating things topologically, a subset $U\subseteq P$ of a poset is \emph{Scott open} if (i) $U = \ua U \equiv \{ x\in P\mid (\exists u\in U) \ u\leq x\}$ is an upper set, and (ii) if $\sup S\in U$ implies $S\cap U\not=\emptyset$ for each directed subset $S\subseteq P$. It is routine to show that the family of Scott-open sets forms a topology on any poset; this topology satisfies $\da x \equiv \{y\in P\mid y\leq x\} = \overline{\{x\}}$ is the closure of a point, so the Scott topology is always $T_0$, and it is $T_1$ iff $P$ is a flat poset. A mapping between dcpos is Scott continuous in the order-theoretic sense iff it is a monotone map that is continuous with respect to the Scott topologies on its domain and range.  We let \DCPO denote the category of dcpos and Scott-continuous maps; \DCPO is a Cartesian closed category.

If $P$ is a dcpo, and $x,y\in P$, then \emph{$x$ approximates $y$} iff for every directed set $S\subseteq P$, if $y\leq \sup S$, then there is some $s\in S$ with $x\leq s$. In this case, we write $x\ll y$ and we let $\Da y = \{x\in P\mid x\ll y\}$. A \emph{basis} for a poset $P$ is a family $B\subseteq P$ satisfying $\Da y\cap B$ is directed and $y = \sup (\Da y\cap B)$ for each $y\in P$. A \emph{continuous poset} is one that has a basis, and a dcpo $P$ is a \emph{domain} if $P$ is  continuous. An element $k\in P$ is \emph{compact} if $x\ll x$, and $P$ is \emph{algebraic} if $KP = \{ k\in P\mid k\ll k\}$ forms a basis. Domains are sober spaces in the Scott topology. 

Domains admit a Hausdorff refinement of the Scott topology which will play a role in our work. The \emph{weak lower topology} on $P$ has the sets of the form if $O = P\setminus \ua F$ as a basis, where $F\subset P$ is a finite subset. The \emph{Lawson topology} on a domain $P$ is the common refinement of the Scott- and weak lower topologies on $P$. This topology has the family 
$$\{ U\setminus\!\ua F\mid U\ \text{Scott open}\ \&\ F\subseteq P\ \text{finite}\}$$
as a basis. The Lawson topology on a domain is always Hausdorff, and a domain is \emph{coherent} if its Lawson topology is compact. We denote the closure of a subset $X\subseteq P$ of a domain in the Lawson topology by $\overline{X}^\Lambda$.

A result that plays an important role for us is the following:
\begin{lemma}\label{lem:cntbase}
Let $D$ be a countably based domain with countable basis $B_D$, and let $x\in D$. Then:
\begin{enumerate}
\item $x$ is the supremum of a countable chain $\{x_n\}_{n\in\Nat}$ with $x_n\ll x$ for each $n$. 
\item If $D$ is coherent, then there is a countable chain of Lawson-open sets $U_{n} = \{ \Ua x_n\setminus \ua F_n\mid x_n\ll x\not\in F_n\subseteq B_D\ \text{finite}\}$ with $x = \bigcap_{n} U_{n}$.
\end{enumerate}
\end{lemma}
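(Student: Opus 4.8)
\medbreak\noindent\textbf{Proof proposal.}

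For part (1) the plan is to thin the directed set $\Da x\cap B_D$ down to a cofinal chain. Since $D$ is a domain with basis $B_D$, this set is directed with supremum $x$, and as $B_D$ is countable we may list $\Da x\cap B_D=\{b_0,b_1,b_2,\dots\}$. I would set $x_0=b_0$ and recursively choose $x_{n+1}\in\Da x\cap B_D$ to be an upper bound of $\{x_n,b_{n+1}\}$, which exists by directedness. By construction $\{x_n\}_n$ is an increasing chain in $B_D$ with $x_n\ll x$ for all $n$, and since $b_n\le x_n$ it is cofinal in $\Da x\cap B_D$; hence $\sup_n x_n=\sup(\Da x\cap B_D)=x$.

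For part (2) I would first use this chain to pin down $\ua x$, then shave off the remaining points of $\ua x$ using basis elements that fail to lie below $x$. Each $\Ua x_n$ is Scott open (a standard fact for continuous dcpos, via interpolation), hence Lawson open, contains $x$, and $x_n\le x_{n+1}$ gives $\Ua x_{n+1}\subseteq\Ua x_n$. Moreover $\bigcap_n\Ua x_n=\ua x$: if $x_n\ll y$ for every $n$ then $x=\sup_n x_n\le y$, and conversely $x\le y$ yields $x_n\ll x\le y$, hence $x_n\ll y$, for each $n$. Now let $C=\{b\in B_D\mid b\not\le x\}$, a countable set; if $C=\emptyset$ then already $\ua x=\{x\}$ (every basis element is below $x$, so any $y\ge x$ satisfies $y=\sup(\Da y\cap B_D)\le x$) and we take $F_n=\emptyset$. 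Otherwise enumerate $C=\{c_0,c_1,\dots\}$ and put $F_n=\{c_0,\dots,c_n\}\subseteq B_D$, finite with $x\notin\ua F_n$. Then $U_n:=\Ua x_n\setminus\ua F_n$ is exactly a basic Lawson-open set, contains $x$, and these sets descend in $n$ since the $\Ua x_n$ descend while the $F_n$ grow. Finally $\bigcap_n U_n=\ua x\setminus\bigcup_{b\in C}\ua b$, and any $y\ge x$ lying in no $\ua b$ with $b\in C$ has every basis element below it already below $x$, whence $y=\sup(\Da y\cap B_D)\le x$ and so $y=x$; thus $\bigcap_n U_n=\{x\}$.

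The one delicate point is this last separation step: it is essential that finite subsets of the countable basis $B_D$ already separate $x$ from every other element of $\ua x$, and that is exactly where the defining property $y=\sup(\Da y\cap B_D)$ of a basis is invoked. Coherence of $D$ does not appear to be needed for the bare statement; its role is rather that it makes the Lawson topology compact Hausdorff, so each Lawson-closed set $\ua x_n\supseteq U_n$ is Lawson-compact, and hence the $U_n$ are relatively compact neighborhoods of $x$ — the form in which the lemma is presumably applied later. If one insists on staying inside coherent domains, the construction above is literally unchanged.
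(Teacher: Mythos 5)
Your proof is correct and follows essentially the same route as the paper: part (1) thins the countable directed set $\Da x\cap B_D$ to a cofinal chain (the paper additionally arranges $x_n\ll x_{n+1}$, which is not needed for the statement), and part (2) carries out explicitly the extraction the paper dismisses as routine, by enumerating the basis elements not below $x$ and growing the finite sets $F_n$ accordingly. Your side observation that coherence is not used in establishing $\bigcap_n U_n=\{x\}$, but only matters for how the Lawson-open sets are exploited later, is also accurate.
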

\begin{proof}
(i):\ Since $D$ has a countable base, there is a countable directed set $B\subseteq \Da x$ with $\sqcup D = x$. If we enumerate $B = \{b_0,b_1,\ldots\}$, then we define the desired sequence $x_n$ as follows: $x_0 = b_0$, and if $x_0\ll x_1\ll \ldots\ll x_n$ have been chosen from $B$, then we choose $x_{n+1}\in B$ with $b_i\ll x_{n+1}$ for each $i\leq n$ and $x_n\ll x_{n+1}$. This extends the sequence, and then a standard maximality argument shows we can choose an countable sequence $x_n$ with $x_n\ll x_{n+1}\ll x$ for each $n$. Finally, $x = \sqcup_n b_n \leq \sqcup_n x_n$, since $b_n\ll x_{n+1}$ for each $n$, but $x_n\ll x$ for each $n$ implies $\sqcup_n x_n\leq x$. 

(ii): By definition of the Lawson topology, we know $\Ua x_n\setminus \ua F$ is an open set containing $x$ if $x\not\in F\subseteq B_D$, and there are countably many of these sets since $B_D$ is countable. It's then routine to extract a chain $U_n = \Ua x_n\setminus \ua F_n$ whose intersection also is $x$. 
\end{proof}

We also need some basic results about Galois adjunctions (cf.~Section 0-3 of \cite{comp}) in the context of complete lattices. If $L$ and $M$ are complete lattices, a \emph{Galois adjunction} is a pair of mappings $g\colon L\to M$ and $f\colon M\to L$ satisfying $f\circ g\leq 1_L$ and $g\circ f\geq 1_M$. In this case, $f$ is the \emph{lower adjoint}, and $g$ is the \emph{upper adjoint}. Lower adjoints preserve all suprema, and upper adjoints preserve all infima. In fact, each mapping $f$ between complete lattices that preserves all suprema is a lower adjoint; its upper adjoint $g$ is defined by $g(y) = \sup f^{-1}(\da y)$. Dually, each mapping $g$ preserving all infima is an upper adjoint; its lower adjoint $f$ is defined by $f(x) = \inf g^{-1}(\ua x)$. The cumulative distribution function of a probability measure on $[0,1]$ and its upper adjoint given in the introduction are examples we'll find relevant. 

Finally, we need some detailed information about two Cartesian closed categories of domains. 
We let \DOM denote that category of domains and Scott continuous maps; this is a full subcategory of \textsf{DCPO}, but it is not Cartesian closed. Nevertheless, \DOM has several Cartesian closed full subcategories. Two of particular interest to us are the full subcategory  \BCD of countably based bounded complete domains. Precisely, a \emph{bounded complete domain} is a domain in which  every non-empty subset  has a greatest lower bound. An equivalent statement is that every subset having an upper bound has a least upper bound. 

A second CCC of domains in which we are interested is \Rb,  the category of countably based domains that are retracts of bifinite domains, and Scott-continuous maps. The simplest way to make this definition precise is by saying that $D\in\RB$ iff there is a countable family $\{f_n\}_n$ of \emph{deflations} of $D$ satisfying $\text{Id}_D = \sup_{n} f_n$, where $f_n\colon D\to D$ is a deflation if $f_n$ is Scott continuous and $f_n(D)$ is finite. Moreover, since $f_n(x)\ll x$ for each deflation (cf.~\cite{comp} Lemma II-2.16), and $f_n(D)$ is finite for each $n$, the family $B_D = \bigcup_n f_n(D)$ is a countable 
basis for $D$. 

Finally, note that note that \BCD is a full subcategory of \Rb, and that \BCD and \RB consist of coherent domains.
%

\subsection{The probabilistic power domain}\label{sec:probpower}
A \emph{continuous valuation} on a domain $D$ is a mapping $\mu\colon \O(D)\to [0,1]$ from the family of Scott-open sets to the interval satisfying: 
\begin{itemize}
\item (Strictness) $\mu(\emptyset) = 0$,
\item (Modularity) $\mu(U\cup V) + \mu(U\cap V) = \mu(U) + \mu(V)$, for $U, V\in \O(D)$,
\item (Scott continuity) If $\{U_i\}_{i\in I}\subseteq \O(D)$ is directed, then $\mu(\bigcup_i U_i) = \sup_i \mu(U_i)$.
\end{itemize}
Valuations are ordered pointwise: $\mu\leq \nu$ iff $\mu(U)\leq \nu(U)$ for all $U\in\O(D)$. 
We denote the set of valuations over a domain $D$ with this order by $\V D$. This is often referred to as the \emph{probabilistic power domain} of $D$. We also denote by $\V_1 D$ the valuations $\mu$ satisfying $\mu(D) = 1$. As we will see, valuations correspond to subprobability measures, while members of $\V_1 D$ correspond to probability measures. 
The following is called the \emph{Splitting Lemma,} it is fundamental for understanding the domain structure of $\V D$ and of $\V_1 D$. 
\begin{theorem}\label{thm:split} (Splitting Lemma~\cite{jones}) 
Let $D$ be a domain and let $\mu = \sum_{x\in F} r_x\delta_x$ and $\nu = \sum_{y\in G} s_y\delta_y$ be \emph{simple valuations} on $D$. Then the following are equivalent:
\begin{enumerate}
\item $\mu \leq \nu \in \V D$,
\item There is a family $\{t_{x,y}\}_{\langle x,y\rangle\in F\times G}\subseteq [0,1]$ of \emph{transport numbers} satisfying:
\begin{itemize}
\item $r_x = \sum_{y\in G} t_{x,y}$ for each $x\in F$,
\item $\sum_{x\in F} t_{x,y} \leq s_y$ for each $y\in G$,
\item $t_{x,y} > 0\ \Rightarrow\ x\leq y$.
\end{itemize}
\end{enumerate}
Moreover, $\mu\ll \nu \in \V D$ iff  (i) $\sum_{x\in F} r_x < s_y$ for each $y\in G$ and (ii) $t_{x,y}$ satisfies $t_{x,y} > 0$ implies $x\ll y\in D$ for each $x\in F, y\in G$. 
\end{theorem}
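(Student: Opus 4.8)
My plan is to read the order relation $\mu\le\nu$ as the assertion that a transportation (network-flow) problem between the finite atom sets $F$ and $G$ is feasible, and then to extract the transport numbers $t_{x,y}$ from a feasible flow. The implication $(2)\Rightarrow(1)$ is a direct computation: if $U$ is Scott-open, hence an upper set, then $\mu(U)=\sum_{x\in F\cap U}r_x=\sum_{x\in F\cap U}\sum_{y\in G}t_{x,y}$, and since $t_{x,y}>0$ forces $x\le y$, membership $x\in U$ together with $t_{x,y}>0$ forces $y\in U$; hence this double sum equals $\sum_{y\in G\cap U}\sum_{x\in F\cap U}t_{x,y}\le\sum_{y\in G\cap U}s_y=\nu(U)$. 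Only the third transport condition and the fact that open sets are upper are used.

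For $(1)\Rightarrow(2)$ I would invoke the classical feasibility criterion for transportation problems (a consequence of the max-flow/min-cut theorem): transport numbers $t_{x,y}\ge 0$ with $\sum_{y\in G}t_{x,y}=r_x$ for every $x\in F$, $\sum_{x\in F}t_{x,y}\le s_y$ for every $y\in G$, and $t_{x,y}>0\Rightarrow x\le y$ exist if and only if
\[
\sum_{x\in A}r_x\ \le\ \sum_{y\in G\cap\ua A}s_y\qquad\text{for every }A\subseteq F .
\]
So the entire content is to deduce this family of inequalities from $\mu\le\nu$. The obvious test set is $\ua A=\bigcup_{a\in A}\ua a$, but in a general domain $\ua a$ is Scott-open only when $a$ is compact, so $\mu\le\nu$ does not apply to $\ua A$ as it stands; this is the one genuinely topological point, and I would settle it using continuity of $D$ and the finiteness of $F\cup G$. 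Put $Z^-=(F\cup G)\setminus\ua A$. For each $a\in A$ and each $z\in Z^-$ we have $a\not\le z$, so (if every $b\ll a$ were $\le z$ then $a=\sup\Da a\le z$) there is some $b\ll a$ with $b\not\le z$; since $Z^-$ is finite and $\Da a$ is directed we may choose a single $b_a\ll a$ lying above all these, whence $b_a\not\le z$ (hence $z\notin\Ua b_a$) for every $z\in Z^-$, while any $z\in\ua A$ has $b_a\ll a\le z$, i.e.\ $z\in\Ua b_a$, for a suitable $a$. Then $\widetilde U:=\bigcup_{a\in A}\Ua b_a$ is Scott-open and $\widetilde U\cap(F\cup G)=\ua A\cap(F\cup G)$, so $\mu(\widetilde U)=\sum_{x\in F\cap\ua A}r_x$ and $\nu(\widetilde U)=\sum_{y\in G\cap\ua A}s_y$; applying $\mu\le\nu$ to $\widetilde U$ and dropping the nonnegative terms with $x\in(F\cap\ua A)\setminus A$ yields exactly the displayed inequality, and feeding it into the feasibility criterion produces the desired $t_{x,y}$.

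The ``moreover'' clause is proved by the same skeleton run in the strict regime: one uses the way-below order in the support condition, the strict form of the feasibility criterion (a flow with $\sum_{x\in F}t_{x,y}<s_y$ for each $y$), and, in place of $\mu\le\nu$, the definition of $\mu\ll\nu$ tested against suitable directed families of simple valuations climbing to $\nu$, namely families that push $\nu$'s atoms slightly downward (available since $D$ is continuous) or divert a little of $\nu$'s mass toward $\bot$ near the separating sets $\widetilde U$; the strict inequalities fall out of the latter, and the way-below support condition from transferring transport numbers back along the relations $b\ll a$. This requires more careful bookkeeping but introduces no new idea. The step I expect to be the main obstacle is the one highlighted above: replacing the (generally non-open) upper sets $\ua A$ by genuine Scott-open sets agreeing with them on the finite set $F\cup G$. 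That is precisely what allows a hypothesis about Scott-open sets, namely $\mu\le\nu$ (respectively $\mu\ll\nu$), to constrain the combinatorics of the finite atom sets $F$ and $G$.
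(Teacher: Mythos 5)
The paper does not actually prove this lemma -- it cites Jones's thesis, and later (in the proof of Proposition~\ref{prop:split}) describes that proof as an application of the Max Flow--Min Cut Theorem to the transportation network between $F$ and $G$. Your handling of the equivalence (1)$\Leftrightarrow$(2) follows exactly that route and is correct: (2)$\Rightarrow$(1) is the standard computation, and your reduction of (1)$\Rightarrow$(2) to the feasibility condition $\sum_{x\in A}r_x\le\sum_{y\in G\cap\ua A}s_y$ for all $A\subseteq F$ is sound, including the construction of a Scott-open set agreeing with $\ua A$ on $F\cup G$. (A small simplification: $U=D\setminus\da\bigl((F\cup G)\setminus\ua A\bigr)$ is already Scott-open, contains $A$, and meets $G$ only inside $\ua A$, so continuity of $D$ is not needed for that step.)

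The genuine gap is in the ``moreover'' clause. What you sketch -- testing $\mu\ll\nu$ against a directed family such as $\xi=\sum_y(1-\epsilon)s_y\delta_{b_y}$ with $b_y\ll y$, whose supremum is $\nu$, and then splitting $\mu\le\xi$ -- delivers only the forward direction, from $\mu\ll\nu$ to strict transport numbers supported on $\ll$. The converse, that such transport data force $\mu\ll\nu$, is not addressed, and it is not mere bookkeeping: by definition you must show that for an \emph{arbitrary} directed family $(\xi_i)$ in $\V D$ (not consisting of simple valuations) with $\nu\le\sup_i\xi_i$, some $\xi_i$ already dominates $\mu$. Restricting the test to directed families of simple valuations would presuppose that the simple valuations form a basis of $\V D$, which is itself a consequence of this very clause, so that route is circular. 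The standard argument takes the slack $\epsilon=\min_y\bigl(s_y-\sum_x t_{x,y}\bigr)>0$, uses $t_{x,y}>0\Rightarrow x\ll y$ to replace the upper sets $\ua B$ by the finitely many Scott-open sets $\bigcup_{x\in B}\Ua x$ for $B\subseteq F$, shows that some $\xi_i$ simultaneously exceeds $\mu$'s mass on all of these up to the slack, and then runs the flow-feasibility criterion once more to conclude $\mu\le\xi_i$; that half-page is the missing piece. (Note also that condition (i) in the statement should read $\sum_{x\in F}t_{x,y}<s_y$, as you in fact interpreted it.)
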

This result can be used to show that, given a basis $B_D$ for $D$, the family $\{ \sum_{x\in F} r_x\delta_x\mid F\subseteq B_D, \sum_{x\in F} r_x < 1\}$ forms a basis for $\V D$; in particular, each sub-probability measure is the directed supremum of simple measures way-below it, so $\V D$ is a domain if $D$ is one. Moreover, Jung and Tix~\cite{jungtix} showed that $\V D$ is a coherent domain if $D$ is. 

Our interest is in countably-based coherent domains, in which case we can refine the Splitting Lemma~\ref{thm:split} and Lemma~\ref{lem:cntbase}.

\begin{notation}\label{nota:dyad}
We let $Dyad$ denote the \emph{positive} dyadic rationals in the unit interval. Likewise, $Dyad_n = \{ {k\over 2^n}\mid 0\leq k\leq 2^n\}$ is the set of dyadics with denominator $2^n$, for each $n\geq 1$.
\end{notation}

\begin{proposition}\label{prop:split}
Let $D$ be a coherent domain with countable basis $B_D$. Yhen:
\begin{enumerate}
\item  $\V\, D$ is a countably-based coherent domain with basis\\[1ex]
\centerline{ $B_{\V D} = \{ \sum_{x\in F} r_x\delta_x\mid F\subseteq B_D\ \text{finite}\ \&\ r_x\in Dyad\ \forall x\in F\}$.}\\[1ex] 
Moreover, 
if $\sum_{x\in F} r_x\delta_x\leq \sum_{y\in G} s_y\delta_y\in B_{\V D}$, then the family $\{t_{x,y}\}_{(x,y)\in F\times G}$ of transport numbers from the Splitting Lemma~\ref{thm:split} satisfy $t_{x,y}\in Dyad$ for all $(x,y)\in F\times G$.
\item The family $\V_1 D = \{\mu\in \V D\mid \mu(D) = 1\}$ is a countably-based coherent domain
with basis\\
\centerline{$B_{\V_1 D} = \{ \sum_{x\in F} r_x\delta_x\mid \perp\, \in F\subseteq B_D\ \text{finite}, \sum_x r_x = 1\ \&\ r_x\in Dyad\ \forall x\in F\}$.}\\[1ex]
Moreover, $\sum_{x\in F} r_x\delta_x \ll (\text{resp.,}\, \leq)  \sum_{y\in G} s_y\delta_y$ in $\V_1D$ iff the transport numbers $\{t_{x,y}\}$ from~\ref{thm:split} satisfy:
\begin{itemize}
\item $r_x = \sum_y t_{x,y}$ for each $x\in F$, and
\item $\sum_{x} t_{x,y} < (\text{resp.,}\, \leq)\, s_y$ if $y\not=\perp_D$ for each $y\in Y$.
\end{itemize}
In particular, $r_\perp > (\text{resp.,}\, \geq)\, s_\perp$. 
\end{enumerate}
Finally, each $\mu\in \V D$ (respectively, $\V_1 D$) is the supremum of a countable chain $\mu_n\in B_{\V D}$ (respectively, $\V_1 D$). 
\end{proposition}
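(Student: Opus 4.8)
The plan is to bootstrap from the already-established structure of $\V D$ with arbitrary real weights and refine to the dyadic sub-basis, reading off the remaining assertions from the Splitting Lemma (Theorem~\ref{thm:split}). For part~(1): by Jones and by Jung--Tix, $\V D$ is a coherent domain when $D$ is, and by the remark following Theorem~\ref{thm:split} the simple valuations $\sum_{x\in F}r_x\delta_x$ with $F\subseteq B_D$ finite and total mass $<1$ form a basis; so the only new content is that restricting the weights to $Dyad$ still yields a basis $B_{\V D}$, whence $\V D$ is countably based because $B_D$ and $Dyad$ are countable. For this I would show: (i)~for every real-weight simple $\sigma\ll\mu$ there is $\sigma'\in B_{\V D}$ with $\sigma\leq\sigma'\ll\mu$, and (ii)~$\Da\mu\cap B_{\V D}$ is directed. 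For~(i), interpolate $\sigma\ll\rho\ll\mu$ and choose a simple $\tau\ll\mu$ from the known basis with $\rho\leq\tau$, so $\sigma\ll\tau$; the way-below (``Moreover'') clause of Theorem~\ref{thm:split} then provides a transport plan supported on pairs $x\ll y$ whose column sums all lie strictly below the weights of $\tau$, so each $r_x$ may be rounded up to a dyadic $r'_x$ close enough to preserve these strict inequalities (and total mass $<1$), the added mass being redistributed along $\ll$-pairs; this gives $\sigma'=\sum_x r'_x\delta_x\ll\tau\ll\mu$, while the diagonal plan $t_{x,x}=r_x$ shows $\sigma\leq\sigma'$. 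For~(ii), two members of $\Da\mu\cap B_{\V D}$ have a real-weight simple upper bound $\ll\mu$ by directedness of the known basis, and rounding it up as in~(i) puts an element of $B_{\V D}$ above both and $\ll\mu$. Since $\mu$ is the supremum of its real-weight approximants, (i)~forces $\mu=\sup(\Da\mu\cap B_{\V D})$, so $B_{\V D}$ is a countable basis.

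For the integrality assertion, suppose $\sum_x r_x\delta_x\leq\sum_y s_y\delta_y$ in $B_{\V D}$ and pick $n$ with all weights in $Dyad_n$. Theorem~\ref{thm:split} provides nonnegative transport numbers $(t_{x,y})$ with $t_{x,y}=0$ for $x\not\leq y$, row sums $r_x$, and column sums $\leq s_y$; after deleting the forced-zero coordinates and adjoining slack variables for the column inequalities, the constraint matrix becomes the incidence matrix of a bipartite graph together with an identity block, hence is totally unimodular, so with the integral margins $2^nr_x$ and $2^ns_y$ the feasible polytope has an integral vertex --- a transport plan with all $t_{x,y}\in Dyad_n$. (A north-west-corner style induction, ordering indices compatibly with $\leq$, is an elementary alternative that visibly never leaves $Dyad_n$.)

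For part~(2), $\V_1 D$ is a sub-dcpo of $\V D$ because $\mu\mapsto\mu(D)$, being evaluation at the Scott-open set $D$, preserves directed suprema. That $\V_1 D$ is a coherent domain with the normalized simple valuations $\widehat\sigma=\sigma+(1-\sigma(D))\delta_\perp$ --- using the least element $\perp$ of $D$, which the statement presupposes --- forming a basis as $\sigma$ ranges over the $\V D$-basis is due to Jones and Tix; I would recall this, observe that $1-\sigma(D)\in Dyad$ whenever $\sigma$ has dyadic weights so that these $\widehat\sigma$ are precisely the elements of $B_{\V_1 D}$, and then extract the way-below characterization from Theorem~\ref{thm:split}: since $\sum_x r_x=\sum_y s_y=1$ the column inequalities are tight in aggregate, so in the way-below case, where they are made strict at every $y\neq\perp$, the deficit must be absorbed at $\perp$; as the condition $t_{x,y}>0\Rightarrow x\leq y$ forces $t_{x,\perp}=0$ for $x\neq\perp$, we obtain $r_\perp=\sum_y t_{\perp,y}\geq t_{\perp,\perp}=\sum_x t_{x,\perp}>s_\perp$ (with $\geq$ throughout in the $\leq$-case), which is the ``in particular'' clause. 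The delicate point is that way-below in $\V_1 D$ is \emph{a priori} weaker than the restriction of way-below in $\V D$ --- a directed set in $\V D$ witnessing $\sigma\not\ll\mu$ need not lie in $\V_1 D$ --- so $\widehat\sigma\ll\mu$ in $\V_1 D$ must be verified intrinsically: given directed $\{\nu_i\}\subseteq\V_1 D$ with $\mu\leq\sup_i\nu_i$, one uses $\sigma\ll\mu$ in $\V D$ to get $\sigma\leq\nu_{i_0}$ for some $i_0$, then invokes $\nu_{i_0}(D)=1$ to upgrade this to $\widehat\sigma\leq\nu_{i_0}$.

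The closing sentence is then immediate: parts~(1) and~(2) make $\V D$ and $\V_1 D$ countably based domains with the displayed countable bases, so Lemma~\ref{lem:cntbase}(i), applied to these domains with those bases, yields for each $\mu$ a countable $\ll$-chain of basis elements with supremum $\mu$ --- the chain being drawn, as in that proof, from the countable directed set $\Da\mu$ intersected with the basis. I expect the two ``delicate'' points to be the genuine obstacles: that dyadic rounding preserves way-below uses the strict half of the Splitting Lemma and not merely $\leq$, and the way-below characterization in $\V_1 D$ must be argued intrinsically because fixing the total mass at $1$ removes the directed witnesses available in $\V D$. The transport-number integrality is routine given the standard total-unimodularity fact, and the countable-chain statement is a direct appeal to Lemma~\ref{lem:cntbase}.
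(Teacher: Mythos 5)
Your proposal is correct in substance, but it reaches the conclusions by routes that differ from the paper's at two points, and it is worth recording what each buys. For the dyadic transport numbers, the paper does not re-solve the transportation problem abstractly: it re-runs Jones's original Max Flow--Min Cut proof of the Splitting Lemma (Theorem~\ref{thm:split}) via the Ford--Fulkerson algorithm and observes that all updates are arithmetic operations, so dyadic capacities (a subsemigroup of $\Re_+$) yield dyadic flows, and rational capacities guarantee termination; your total-unimodularity (or north-west-corner) argument proves the same existence statement without any appeal to the algorithmic provenance of the $t_{x,y}$, which is cleaner and independent of how the Splitting Lemma is proved. For part~(2), the paper's mechanism is the map $\varphi(\mu)=\mu+(1-\mu(D))\delta_\perp$: it is a Scott-continuous idempotent projection of $\V D$ onto $\V_1 D$, so Theorem~I-1.22 of \cite{comp} delivers at one stroke both the continuity of $\V_1 D$ and the characterization $\mu\ll\nu$ in $\V_1 D$ iff there is $\mu'\in\V D$ with $\mu\leq\varphi(\mu')$ and $\mu'\ll_{\V D}\nu$; your ``delicate point'' --- taking a directed family in $\V_1 D$, using $\sigma\ll_{\V D}\mu$ to get $\sigma\leq\nu_{i_0}$, and upgrading to $\widehat\sigma\leq\nu_{i_0}$ because the only Scott-open set containing $\perp$ is $D$ itself --- is precisely a hands-on re-proof of the relevant half of that citation, so your route is more self-contained while the paper's is shorter and also avoids outsourcing the continuity of $\V_1 D$ to Jones/Tix. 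Your rounding argument establishing that dyadic weights still give a basis supplies detail the paper compresses into ``the Splitting Lemma implies $B_{\V D}$ is a basis,'' and your appeal to Lemma~\ref{lem:cntbase}(i) for the final countable-chain claim matches the paper's intent. One caveat applies equally to your derivation of $r_\perp>s_\perp$ and to the statement itself: in the degenerate case $\nu=\delta_\perp$ (no $y\neq\perp$) the strict inequality cannot hold even though $\delta_\perp$, being the bottom of $\V_1 D$, is way-below everything; this is a flaw in the proposition's phrasing rather than in your argument, but your chain $\sum_x t_{x,\perp}>s_\perp$ silently assumes $G\neq\{\perp\}$ and you should flag that.
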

\begin{proof}
(i): It is shown in~\cite{jungtix} that $\V D$ is coherent if $D$ is, and the Splitting Lemma~\ref{thm:split} implies $B_{\V D}$ is a basis for $\V D$. 

We next outline the proof of the second point -- that the transport numbers $t_{x,y}$ between comparable simple measures all belong to $Dyad$ if the coefficients of the measures do. This follows from the proof of the Splitting Lemma~\ref{thm:split} as presented in~\cite{jones}: That proof is an application of the Max Flow -- Min Cut Theorem to the directed graph $G = (E,N)$ which has a ``source node," $\perp$, connected by an outgoing edge of weight $r_x$ to each ``node" $x\in F$, a  ``sink node," $\top$, with an incoming edge of weight $s_y$ from each element $y\in G$,  and edges from $x\in F$ to $y\in G$ of large weight (say, $1$), if $x\leq y$. 

A \emph{flow} is an assignment $f\colon E \to \Re_+$ of non-negative numbers to each edge so that $f(u,v) \leq c(u,v)$ for nodes $u, v$, where $c(u,v)$ is the weight as defined above, and satisfying $\sum_u f(u,v) = \sum_t f(v,t)$ for each node $t\not=\perp, \top$. The \emph{value} of a flow $f$ is $val f = \sum_{u} f(\perp\! u)$, the total amount of flow out of $\perp$ using $f$.  A \emph{cut} is a partition of $N = S\stackrel{\cdot}{\cup} T$ with $\perp\in S$ and $\top\in T$. The value of a flow across the cut $T$ is $\sum_{(u,v) \in S\times T\,\cap\, E} f(u,v)$. 

The Max Flow--Min Cut Theorem asserts that the maximum flow on a directed graph is equal to the minimum cut. It is proved by applying the Ford--Fulkerson Algorithm~\cite{bolla}. The algorithm starts by assigning the minimum flow $f(u,v) = 0$ for all edges $(u,v)\in E$, and then iterates a process of selecting a path from $\perp$ to $\top$, calculating the residual capacity of each edge in the path, defining a residual graph $G_f$, augmenting the paths in $G_f$ to include additional flow, and then iterating. The result of the algorithm is the set of flows along edges across the cut, which are the transport numbers $t_{x,y}$ in our case. Since the calculations of new edge weights involve only arithmetic operations, and since the dyadic rationals form a subsemigroup of $\Re_+$, the resulting transport numbers $t_{x,y}$ are dyadic rationals if the coefficients of the input distributions are dyadic. Moreover, if all the weights are rational, then the algorithm halts producing the maximum flow across the network. 

(ii): Let $\varphi\colon \V D \to\V_1 D$ by $\varphi(\mu) = \mu + (1-\mu(D))\delta_\perp$. 
Then $\varphi$ is Scott continuous, since $\sup_n \mu_n = \mu$ implies $\sup_n \mu_n(D) = \mu(D)$. $\varphi$ also is a \emph{projection:} $\varphi\circ \varphi = \varphi$. So, Theorem I-1.22 of~\cite{comp} implies $\V_1 D = \varphi(\V D)$ is continuous, and $\mu\ll \nu\in \V_1 D$ iff $(\exists \mu'\in \V D)\ \mu\leq \varphi(\mu')\ \&\ \mu' \ll_{\V D} \nu$. This implies $B_{\V_1D}$ is a basis for $\V_1 D$, since $B_{\V D}$ is a basis is one for $\V D$ by part (i). The final claim follows from these results and the characterization of the transport numbers $\{t_{x,y}\}$ from Theorem~\ref{thm:split}. 
\end{proof}

\subsection{The Jung-Tix Problem and the special case of chains}\label{subsec:chain}
We now turn our attention to a longstanding problem in domain theory. The \emph{Jung-Tix Problem} asks whether there are any Cartesian closed categories of domains for which the valuations monad $\V$ is an endofunctor. We do not have an answer, but we can offer insight to the question. We are able to show that the probabilistic power domain of any complete chain is a continuous lattice. We include the result here because we discovered a proof of this result during our research on how to express Skorohod's Theorem in domain-theoretic terms. 

\begin{notation}\label{nota:chain}
Throughout this section, \textbf{we assume $D$ is a chain.}  
\end{notation}
 
 \begin{definition}\label{def:cmd}
 Let $\mu$ be a sub-probability measure on $D$. The \emph{cumulative distribution function} $F_\mu\colon D\to [0,1]$ is defined by $F_\mu(x) = \mu(\da x)$.  
 \end{definition}
 
 \begin{proposition}\label{prop:cmd}
 For each $\mu\in \SPr{D}$, $F_\mu$ preserves all infima. 
 \end{proposition}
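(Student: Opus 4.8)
The plan is to show that $F_\mu$ takes an arbitrary infimum in $D$ to the corresponding infimum in $[0,1]$. Since $D$ is a chain, every nonempty subset $S \subseteq D$ has an infimum $x_0 = \inf S$, and because $D$ is a chain the set $S$ is in particular \emph{downward filtered} (in fact totally ordered), so the relevant fact to establish is $F_\mu(\inf S) = \inf_{x \in S} F_\mu(x)$. Monotonicity of $F_\mu$ gives one inequality for free: $F_\mu(x_0) \leq F_\mu(x)$ for every $x \in S$, hence $F_\mu(x_0) \leq \inf_{x\in S} F_\mu(x)$. The work is in the reverse inequality.

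For the reverse inequality I would exploit the definition $F_\mu(x) = \mu(\da x)$ together with the fact that $\mu$, being a (sub-probability) measure, is continuous along decreasing sequences of sets of finite measure — or, working directly with the valuation/measure correspondence described in Section~\ref{sec:probpower}, I would pass to complements and use Scott continuity of $\mu$ on open sets. Concretely: for a totally ordered $S$ with infimum $x_0$, I claim $\da x_0 = \bigcap_{x \in S} \da x$. The inclusion $\subseteq$ is clear; for $\supseteq$, if $y \leq x$ for all $x \in S$ then $y$ is a lower bound of $S$, so $y \leq x_0$ since $x_0$ is the greatest lower bound. Thus $\{\da x\}_{x\in S}$ is a downward-directed family of measurable sets (downward-directed because $S$ is a chain) whose intersection is $\da x_0$, and continuity from above for the finite measure $\mu$ yields $\mu(\da x_0) = \inf_{x\in S}\mu(\da x)$, i.e.\ $F_\mu(x_0) = \inf_{x\in S} F_\mu(x)$, which is exactly what we want. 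One should also dispatch the degenerate case $S = \emptyset$: then $\inf \emptyset = \top_D$ (the top of the chain, which exists since $D$ is a complete chain — note $D$ being a domain that is a chain and directed complete, together with bounded completeness implicit in the ambient setting, gives a top), and $\inf_{x\in\emptyset} F_\mu(x) = 1 \geq \mu(D) = F_\mu(\top_D)$; but the convention for ``preserves all infima'' here is about nonempty infima or one restricts to the case at hand, so this is a minor bookkeeping point.

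The main obstacle is justifying continuity from above for $\mu$ when $\mu$ is presented as a continuous valuation on open sets rather than as a Borel measure. The clean route is to rephrase everything in terms of open sets: $\da x$ is Lawson-closed but need not be Scott-closed, so $D \setminus \da x$ need not be Scott-open, and one cannot directly invoke Scott continuity of the valuation. The fix is that on a chain, the Scott topology is easy to describe — the Scott-open sets are exactly the up-sets $\ua a$ and their directed unions, i.e.\ sets of the form $\{y : y > a\}$ or $\ua a$ — and one checks that $\mu(\da x)$ can be computed as $\mu(D) - \mu(\{y : y \not\leq x\})$ where $\{y : y\not\leq x\} = \{y : y > x\}$ since $D$ is totally ordered, and $\{y : y > x\}$ is Scott-open. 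Then $\bigcup_{x \in S}\{y : y > x\} = \{y : y > x_0 \text{ or } y \text{ incomparable to all}\} = \{y : (\exists x \in S)\, y > x\}$, which equals $D \setminus \da x_0$; this union is directed because $S$ is a chain, so Scott continuity of $\mu$ gives $\mu(D \setminus \da x_0) = \sup_{x\in S}\mu(\{y : y > x\})$, and subtracting from $\mu(D)$ gives the desired $F_\mu(x_0) = \inf_{x\in S} F_\mu(x)$. I would present the argument in this open-set form to stay within the valuation framework of the paper, flagging the chain hypothesis as precisely what makes $\{y : y > x\}$ Scott-open and the family directed.
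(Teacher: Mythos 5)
Your proof is correct and follows essentially the same route as the paper's: monotonicity of $F_\mu$, the identity $\da(\inf S)=\bigcap_{x\in S}\da x$, and Scott continuity of the valuation applied to the directed union of the complements $D\setminus\da x$ (equivalently, preservation of filtered intersections of closed sets), using finiteness of $\mu$ to subtract from $\mu(D)$. One small correction that does not affect the argument: $\da x$ is \emph{always} Scott-closed (it is $\overline{\{x\}}$ in the Scott topology), so $\{y\mid y\not\leq x\}$ is Scott-open in any poset; the chain hypothesis is needed only to make the family directed and to get finite infima from monotonicity.
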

 \begin{proof}
 Let $\mu$ be a sub-probability measure. If $x\leq y\in D$, then $\da x\subseteq \da y$, so $F_\mu(x) = \mu(\da x) \leq \mu(\da y) = F_\mu(y)$. So $F_\mu$ is monotone, and since $D$ is a chain, this means $F_\mu$ also preserves finite infima. Now, any filtered set $A\subseteq D$ is totally ordered  because $D$ is. Then $\da \inf A = \bigcap_{x\in A} \da x$, and so\\[1ex]
 \centerline{$F_\mu(\da \inf A) = F_\mu(\bigcap_{x\in A} \da x) = \mu(\bigcap_{x\in A} \da x) = \inf_{x\in A} \mu(\da x) = \inf_{x\in A} F_\mu(x)$,}\\[1ex]
 where the next-to-last equality follows from the fact that, being a  Scott-continuous valuation on $D$, $\mu$ preserves directed unions of open sets, so it preserves filtered intersections of closed sets, such as $\{ \da x\mid x\in A\}$. This shows $F_\mu$ also preserves filtered infima, and so it preserves all infima.
 \end{proof}
 
Since $F_\mu$ preserves all infima and $D$ is a continuous lattice, it follows that $F_\mu$ is an upper adjoint, so it has a unique lower adjoint $G_\mu \colon [0,1]\to D$ defined by $G_\mu(r) = \inf F_\mu^{-1}(\ua r)$. We denote this relationship by $F_\mu \dashv G_\mu$. 

We recall some facts about such adjoint pairs; for more detail, see Chapter 0 of~\cite{comp}. 
First, each component of an adjoint pair $f\colon L \to M$, $g\colon M\to L$ with $f\dashv g$ determines the other. The formula for $G$ above shows how to define the lower adjoint, given an upper adjoint: $g(x) = \inf f^{-1}(\ua x)$. Conversely, given a lower adjoint $g$, the upper adjoint $f$ is given by $f(y) = \sup g^{-1}(\da y)$. Upper adjoints preserve all infima, and lower adjoints preserve all suprema. Moreover, if $f\dashv g$ and $f'\dashv g'$, then $f\leq f'$ iff $g'\geq g$. Finally, the components earn their names because of the relationship $f\circ g\geq 1_M$ and $g\circ f\leq 1_L$.
 
\begin{proposition}\label{prop:uuperadj}
 If $\mu$ is a sub-probability measure on $D$ with cumulative distribution function $F_\mu$, then the upper adjoint, $G_\mu\colon [0,1]\to D$ satisfies $G_\mu\, \lambda = \mu$, where $\lambda$ denotes Lebesgue measure. 
\end{proposition}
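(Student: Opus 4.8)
The plan is to recognise $G_\mu$ as the quantile function (the generalised inverse of the cumulative distribution function) of $\mu$ and to prove $(G_\mu)_*\lambda=\mu$ by transcribing the classical inverse-transform argument into this order-theoretic setting. First, $G_\mu$, being a lower adjoint between complete lattices, preserves all suprema, in particular directed ones, so it is a Scott-continuous map $[0,1]\to D$; since the Scott topology on $[0,1]$ is coarser than the usual one and the Borel $\sigma$-algebra of $D$ is generated by the Scott-open sets, $G_\mu$ is Borel measurable and $(G_\mu)_*\lambda$ is a well-defined Borel measure on $D$. Because $D$ is a chain, the principal ideals $\{\da x\mid x\in D\}$ form a $\pi$-system, since $\da x\cap\da y=\da(x\wedge y)$; moreover each complement $D\setminus\da x=\{y\mid y>x\}$ is Scott-open, and as $D$ is a \emph{complete} chain these sets (together with $\emptyset$ and $D$) exhaust the Scott topology, so $\{\da x\mid x\in D\}$ generates the Borel $\sigma$-algebra of $D$. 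By the Dynkin $\pi$-$\lambda$ uniqueness theorem it therefore suffices to show that $(G_\mu)_*\lambda$ and $\mu$ agree on every $\da x$ and carry the same total mass.

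The crux is the identity $G_\mu^{-1}(\da x)=[0,F_\mu(x)]$, valid for every $x\in D$ when $\mu$ is a probability measure (and for every $x$ short of $\top$ in general). This is the defining Galois property of the adjunction $F_\mu\dashv G_\mu$ recalled above: $G_\mu(r)\le x$ iff $r\le F_\mu(x)$. The implication $r\le F_\mu(x)\Rightarrow G_\mu(r)\le x$ is immediate from $G_\mu(r)=\inf F_\mu^{-1}(\ua r)$, since then $x\in F_\mu^{-1}(\ua r)$; for the converse, $F_\mu$ preserves infima (Proposition~\ref{prop:cmd}), so $F_\mu(G_\mu(r))=\inf\{F_\mu(y)\mid F_\mu(y)\ge r\}\ge r$ whenever $F_\mu^{-1}(\ua r)\ne\emptyset$, and monotonicity of $F_\mu$ gives the claim. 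Consequently $(G_\mu)_*\lambda(\da x)=\lambda([0,F_\mu(x)])=F_\mu(x)=\mu(\da x)$, which is exactly the agreement on the generating $\pi$-system.

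What is left is the bookkeeping of total mass, and this is the one genuinely delicate point. If $\mu$ is a probability measure then $(G_\mu)_*\lambda(D)=\lambda([0,1])=1=\mu(D)$, and the $\pi$-$\lambda$ argument closes the proof. For a proper sub-probability with $\mu(D)=c<1$, however, $F_\mu$ never exceeds $c$, so for $r>c$ the set $F_\mu^{-1}(\ua r)$ is empty and $G_\mu(r)=\inf_D\emptyset=\top$; thus $G_\mu$ carries the interval $(c,1]$ onto the top element, and one gets $(G_\mu)_*\lambda=\mu+(1-c)\,\delta_\top$ rather than $\mu$ on the nose. I would therefore either state the proposition for probability measures and recover the general case by restricting to $D\setminus\{\top\}$ (equivalently, by routing the defect mass through the projection $\V D\to\V_1D$ of Proposition~\ref{prop:split}), or simply record the precise identity $(G_\mu)_*\lambda=\mu+(1-\mu(D))\delta_\top$. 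I expect this mass-at-$\top$ accounting to be the only step requiring care; the measurability of $G_\mu$ and the $\pi$-$\lambda$ passage are routine, and the adjunction identity is a one-line consequence of Proposition~\ref{prop:cmd} together with the standard facts about Galois adjunctions recalled above from \cite{comp}.
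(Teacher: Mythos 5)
Your argument follows the same route as the paper's: the Galois property $G_\mu(r)\le x\iff r\le F_\mu(x)$ gives $G_\mu^{-1}(\da x)=[0,F_\mu(x)]$, hence $G_\mu\,\lambda(\da x)=F_\mu(x)=\mu(\da x)$, and one concludes from agreement on principal ideals (the paper says ``agree on Scott-closed sets''; your $\pi$-$\lambda$ passage is the same step made explicit). The substantive point is that the caveat you raise is not optional bookkeeping but a genuine correction to the statement as printed: for a proper sub-probability measure with $\mu(D)=c<1$ the push-forward of $\lambda$ has total mass $1\neq c$, so $G_\mu\,\lambda=\mu$ cannot hold. Concretely, $F_\mu(\top)=c<1$ means $F_\mu$ does not preserve the empty infimum (Proposition~\ref{prop:cmd} only treats nonempty infima), so $F_\mu$ is not literally an upper adjoint; the identity $G_\mu^{-1}(\da x)=\da F_\mu(x)$ fails at $x=\top$, where $G_\mu^{-1}(\da\top)=[0,1]\neq[0,c]$; and the paper's closing assertion that the two measures agree on all Scott-closed sets already fails at the closed set $D=\da\top$. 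Your corrected identity $G_\mu\,\lambda=\mu+(1-\mu(D))\,\delta_\top$ is the right general statement, and the proposition as written is exact precisely when $\mu$ is a probability measure; restricting to $\Pr{D}$, or tracking the defect mass at $\top$, is the appropriate repair (and the same accounting needs to be carried into the use of this proposition in Theorem~\ref{thm:chain}).
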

\begin{proof}
If $x\in D$, then 
\begin{eqnarray*}
G_\mu\, \lambda(\da x) & = & \lambda(G_\mu^{-1}(\da x))\\
& = & \lambda(\da F_\mu(x)) \qquad\qquad \qquad F_\mu\dashv G_\mu\\
& = & F_\mu(x) = \mu(\da x)
\end{eqnarray*}
Since $G_\mu\,\lambda$ and $\mu$ agree on Scott-closed sets, it follows that $G_\mu\, \lambda = \mu.$
\end{proof} 
 
 \begin{theorem}\label{thm:chain}
 If $D$ is a chain and $KD = \{\perp\}$, then $G\mapsto G\, \lambda\colon [[0,1]\to D]\to \SPr{D}$ is an order-isomorphism. Therefore, $\SPr{D}$ is a continuous lattice, and the same is true of $\Pr{D}$ is a domain.
 \end{theorem}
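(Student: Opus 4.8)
The plan is to realise $\Pr{D}$ and $\SPr{D}$ as Scott-continuous function spaces of the form $[[0,1]\to X]$ for suitable complete chains $X$, and then to read off continuity from the fact that such function spaces are continuous lattices. The workhorse is the correspondence $\mu\mapsto F_\mu\mapsto G_\mu$ already set up, where $F_\mu\colon D\to[0,1]$ is the infimum-preserving distribution function of Definition~\ref{def:cmd} and $G_\mu\colon[0,1]\to D$ is its lower adjoint.

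I would first treat \emph{probability} measures, showing that $\mu\mapsto G_\mu$ and $G\mapsto G_*\lambda$ are mutually inverse order-isomorphisms between $\Pr{D}$ and the poset of lower adjoints $[0,1]\to D$ --- equivalently, since $D$ is a chain, the \emph{strict} Scott-continuous maps, i.e.\ those sending $0$ to $\perp$ (a Scott-continuous map out of a chain automatically preserves all nonempty suprema, so strictness is exactly what upgrades it to a lower adjoint). One direction, $G_\mu\,\lambda=\mu$, is Proposition~\ref{prop:uuperadj}. For the converse, given a lower adjoint $G$ with upper adjoint $H\colon D\to[0,1]$, the adjunction gives $G^{-1}(\da x)=\da H(x)$, so $F_{G_*\lambda}(x)=\lambda(G^{-1}(\da x))=\lambda(\da H(x))=H(x)$; hence $G_{G_*\lambda}$, being the lower adjoint of $H$, equals $G$. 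The hypothesis enters here: for a complete chain --- in particular when $KD=\{\perp\}$ --- the Scott-open subsets of $D$ are exactly $\emptyset$, $D$ and the sets $\{y:x<y\}$, so a valuation is determined by its distribution function together with its total mass, which is what makes the passage $\mu\leftrightarrow F_\mu$ faithful.

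For the order-isomorphism I would show that $\mu\le\nu$ in $\Pr{D}$ iff $F_\mu\ge F_\nu$ pointwise (using $\mu(\{y:x<y\})=1-F_\mu(x)$ together with the description of the opens), and then use that the passage from an upper adjoint to its lower adjoint is order-reversing: as $F_\mu,F_\nu$ are the upper adjoints of $G_\mu,G_\nu$, this gives $\mu\le\nu$ iff $G_\mu\le G_\nu$, and symmetrically the inverse map $G\mapsto G_*\lambda$ is order-preserving. Continuity is then immediate: $[[0,1]\to D]$ is a continuous lattice because $[0,1]$ is a domain and $D$ is a continuous lattice (a complete chain is automatically continuous, with basis itself), and the strict maps are the image of the Scott-continuous idempotent $G\mapsto\bigl(0\mapsto\perp,\ t\mapsto G(t)\ \text{for}\ t>0\bigr)$, hence again a continuous lattice by Theorem~I-1.22 of~\cite{comp}. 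Transporting along the order-isomorphism shows $\Pr{D}$ is a continuous lattice.

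Finally, for $\SPr{D}$ I would reduce to the probability case: a sub-probability measure $\mu$ on $D$ of defect mass $1-\mu(D)$ corresponds to the probability measure $\mu+(1-\mu(D))\delta_{\perp'}$ on the complete chain $D_{\perp'}$ obtained by adjoining a new bottom $\perp'$ at which the defect is parked, and one checks this assignment is an order-isomorphism $\SPr{D}\cong\Pr{D_{\perp'}}$. Since the Scott-open sets of $D_{\perp'}$ still have the simple form above (even though $KD_{\perp'}$ is strictly larger than $\{\perp'\}$), the probability argument applies verbatim to $D_{\perp'}$, and so $\SPr{D}$ is a continuous lattice as well. The main obstacle is precisely the bookkeeping in this last step and in the order-matching of the previous paragraph: the naive assignment $\mu\mapsto G_\mu$ is well behaved only within a fixed total mass, so the real content is to combine (i) the faithful passage to distribution functions --- the only place the structure of the chain is used --- with (ii) the adjoint calculus linking distribution functions to Scott-continuous maps on the interval, in a way that fits together cleanly across all total masses.
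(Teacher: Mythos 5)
Your proposal is correct, and it runs on the same engine as the paper's own argument---the distribution function of Definition~\ref{def:cmd}, Propositions~\ref{prop:cmd} and~\ref{prop:uuperadj}, the calculus of adjoint pairs, continuity of the function space $[[0,1]\to D]$, and a retract argument---but the decomposition is genuinely different. The paper asserts directly that $G\mapsto G\,\lambda$ is an order-isomorphism of the \emph{whole} function space onto $\SPr{D}$, concludes that $\SPr{D}$ is a continuous lattice, and only then obtains $\Pr{D}$ as the image of the closure operator $\mu\mapsto\mu+(1-\mu(D))\delta_\perp$. You go the other way around: you first identify $\Pr{D}$ with the strict Scott-continuous maps (equivalently the lower adjoints, since a Scott-continuous map out of the chain $[0,1]$ preserves all nonempty suprema), exhibited as the image of a Scott-continuous kernel operator on $[[0,1]\to D]$, and then you capture $\SPr{D}$ as $\Pr{D_\perp}$ for the lift $D_\perp$, checking that the one extra compact element of $D_\perp$ contributes only the open set $D$ itself, which is again a strict up-set in $D_\perp$, so the chain analysis goes through unchanged. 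This extra bookkeeping buys something real: the pushforward of $\lambda$ under a total map always has total mass one and is blind to the value $G(0)$, so on the full function space the assignment $G\mapsto G\,\lambda$ is neither injective (maps agreeing on $(0,1]$ share a law) nor onto $\SPr{D}$; the clean bijection is precisely the one you prove, between lower adjoints and $\Pr{D}$ at fixed total mass. Thus you establish both continuity claims of the theorem by a route that in effect repairs its literal statement rather than reproducing it, and the steps you leave as routine (that $\mu\mapsto\mu+(1-\mu(D))\delta_{\perp}$ is an order-isomorphism of $\SPr{D}$ onto $\Pr{D_\perp}$, and that $\mu\le\nu$ iff $F_\mu\ge F_\nu$ for probability measures on a chain) do check out.
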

 \begin{proof}
 Each Scott-continuous map $G\colon [0,1]\to D$ preserves all suprema, since the domain $D$ is a chain. And each such map determines a sub-probability measure $G\, \lambda$. Then the cumulative distribution $F_{G\,\lambda}\colon D\to [0,1]$ satisfies $F_{G\,\lambda}(x) = G\,\lambda(\da x) = \lambda(G^{-1}(\da x)) = \sup G^{-1}(\da x)$. This means $F_{G\,\lambda}$ is the upper adjoint of $G$. Since upper and lower adjoints uniquely determine one another, the mapping $G\mapsto G\,\lambda$ has an inverse sending $\mu$ to the lower adjoint of $F_\mu$. 
 
 For the order structure, suppose $G\leq G'\in [[0,1]\to D]$. We show $G\,\lambda\leq G'\,\lambda$: Then given $x\in D$ and $r\in [0,1]$, if $G'(r) \leq x$, then $G(r)\leq x$; said another way, $G'^{-1}(\da x) \subseteq G^{-1}(\da x)$, so \\[1ex]
 \centerline{$G'\,\lambda(\da x) = \lambda(G'^{-1}(\da x)) = \sup G'^{-1}(\da x) \leq \sup G^{-1}(\da x) = \lambda(G^{-1}(\da x)) = G\,\lambda(\da x)$.}\\[1ex]
If $x =\, \perp$, $G\,\lambda(\Ua x) = G\,\lambda (D) \leq G'\,\lambda(D) = G'\,\lambda(\Ua x)$. On the other hand, since $KD = \{\perp\}$, then $x > \perp$ implies  $D = \da x\stackrel{\cdot}{\cup} \Ua x$, so we have\\[1ex]
 \centerline{$G\,\lambda(\Ua x) = G\,\lambda(D) - G\,\lambda(\da x) \leq G\lambda(D) - G'\,\lambda(\da x)\leq G'\lambda(D) - G'\lambda(\Ua x)  = G'\,\lambda(\Ua x)$.}\\[1ex] 
Since $D$ is a chain, every Scott-open set has the form $\Ua x$ for some $x\in D$, so $G\,\lambda\leq G'\,\lambda$. 

Conversely, if $\mu\leq \nu$, then $\mu(\da x)\geq \nu(\da x)$ by the same argument we used above,  so $F_\mu(x) = \sup \mu(\da x) \geq \sup  \nu(\da x) = F_\nu(x)$. It follows that $G_\mu\leq G_\nu$ from our remarks about adjoint pairs. 

Thus, the correspondence $G\mapsto G\,\lambda$ is an order-isomorphism. Since $[0,1]$ and $D$ are continuous lattices, they are both bounded complete domains, so $[[0,1]\to D]$ is a bounded complete domain. But $x\mapsto \top$ is the largest element of $[[0,1]\to D]$, so this is a continuous lattice.   It follows that $\SPr{D}$ is a continuous lattice as well. 

For the final claim, the mapping $\mu\mapsto \mu + (1 - \mu(D))\delta_\perp: \SPr{D}\to \Pr{D}$ is a closure operator that preserves directed sups, and the image of a continuous lattice under such a closure operator is a continuous lattice (cf.~\cite{comp}, Definition 0-2.10ff.).
 \end{proof}
 
\subsection{Valuations versus sub-probability measures}
It is straightforward to show that each Borel sub-probability measure on a domain $D$ restricts to a Scott-continuous valuation on the Scott-open sets of $D$. The converse, that each Scott-continuous valuation on a dcpo extends to a unique Borel sub-probability measure was shown by Alvarez-Manilla, Edalat and Saheb-Djorhomi~\cite{alvman}. 

The next step is to link the order-structure of $\V D$ to the family \textsf{SProb}$\, D$ of sub-probability measures on $D$, and this requires the next result. We recall that a \emph{simple sub-probability measure} on a space $X$ is a finite convex sum $\sum_{x\in F} r_x\delta_x$, where $F\subseteq X$ is finite, $r_x\geq 0$ for each $x\in F$, and $\sum_{x\in F} r_x\leq 1$. 
We also
recall that the real numbers, $\Re$, are a continuous poset whose Scott topology has the intervals $(a,\infty)$ as a basis, and whose Lawson topology is the usual topology. 
\begin{proposition}\label{weak-law}
Let $D$ be a coherent domain, and let $\mu, \nu$ be sub-probability measures on $D$. Then the following conditions are equivalent:
\begin{enumerate}
\item $\mu\leq \nu\in \V D$. 
\item For each Scott-continuous map $f\colon D\to \Re_+$, $\int f d\mu \leq \int f d\nu$.
\item For each monotone Lawson-continuous $f\colon D\to \Re_+$, $\int f\, d\mu \leq \int f\, d\nu$.
\end{enumerate}
\end{proposition}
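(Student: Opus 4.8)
\medskip
\noindent\textbf{Proof proposal.} The plan is to establish the cycle $(1)\Rightarrow(2)\Rightarrow(3)\Rightarrow(1)$. Since $(1)$ is equivalent to ``$\mu(U)\le\nu(U)$ for every Scott-open $U$'' and $\mu(U)=\int\chi_U\,d\mu$, the content of the outer implications is a statement about how well monotone functions detect Scott-open sets. For $(1)\Rightarrow(2)$: if $f\colon D\to\Re_+$ is Scott-continuous then each superlevel set $f^{-1}(t,\infty)=\{x\mid f(x)>t\}$ is Scott-open, so the layer-cake (Choquet) formula, applied to the Borel extensions of $\mu,\nu$ furnished by \cite{alvman}, gives $\int f\,d\mu=\int_0^\infty\mu\{f>t\}\,dt\le\int_0^\infty\nu\{f>t\}\,dt=\int f\,d\nu$.

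For $(2)\Rightarrow(3)$ I would observe that the class of integrands in $(3)$ is contained in that of $(2)$: every monotone Lawson-continuous $f\colon D\to\Re_+$ is in fact Scott-continuous. Indeed, if $S$ is directed with $s=\sqcup S$, then the net $S$ converges to $s$ in the Lawson topology --- given a basic Lawson neighbourhood $\Ua w\setminus\ua F$ of $s$, interpolate $w\ll w'\ll s$ and pick $s_0\in S$ with $w'\le s_0$; every $s'\ge s_0$ in $S$ then satisfies $w\ll s'$ (since $w\ll w'\le s'$) and $s'\notin\ua F$ (otherwise some element of $F$ would lie below $\sqcup S=s$), so the tail of $S$ lies in the neighbourhood. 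Lawson-continuity then gives $f(S)\to f(s)$ in $\Re_+$, and monotonicity forces $f(s)=\sup f(S)$. Hence $(3)$ is just the restriction of $(2)$ to this subclass of integrands, and $(2)\Rightarrow(3)$ is immediate.

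The substantive implication is $(3)\Rightarrow(1)$, and here coherence enters decisively. Fix a Scott-open $U$ and set $\mathcal F_U=\{f\colon D\to[0,1]\mid f\text{ monotone, Lawson-continuous, }f\le\chi_U\}$; this family is directed, being closed under binary $\max$. I claim $\sup\mathcal F_U=\chi_U$ pointwise. For $x\in U$, choose $w\ll x$ with $w\in U$ (possible as $U$ is Scott-open and $x=\sqcup\Da x$), so $\ua w\subseteq U$ and $x\in\ua w$. Since $D$ is coherent, $(D,\text{Lawson},\le)$ is a compact ordered space --- Lawson-compact, Hausdorff, with $\le$ closed in the square --- so Nachbin's order-theoretic Urysohn lemma produces a monotone Lawson-continuous $f\colon D\to[0,1]$ with $f\equiv1$ on the Lawson-closed up-set $\ua w$ and $f\equiv0$ on the Lawson-closed down-set $D\setminus U$. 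Then $f\in\mathcal F_U$ and $f(x)=1$; since every member of $\mathcal F_U$ vanishes off $U$, this proves the claim, so $\chi_U$ is the directed supremum of $\mathcal F_U$ in the dcpo of Scott-continuous $[0,1]$-valued maps on $D$. Integration against the continuous valuation $\mu$ preserves directed suprema of Scott-continuous integrands --- a standard fact which reduces, via the Choquet formula, to Scott-continuity of $\mu$ on open sets --- so $\mu(U)=\int\chi_U\,d\mu=\sup_{f\in\mathcal F_U}\int f\,d\mu\le\sup_{f\in\mathcal F_U}\int f\,d\nu=\nu(U)$, the inequality being $(3)$ applied termwise; letting $U$ vary gives $\mu\le\nu\in\V D$. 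The main obstacle is exactly this last implication: setting up Nachbin's monotone Urysohn lemma correctly (which is where coherence of $D$ is used) and justifying the interchange of $\sup$ and $\int$ for a directed family of Scott-continuous integrands against a continuous valuation.
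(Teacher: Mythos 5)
Your proof is correct, and it follows the paper's overall cycle $(1)\Rightarrow(2)\Rightarrow(3)\Rightarrow(1)$, with $(2)\Rightarrow(3)$ handled exactly as in the paper (monotone Lawson-continuous implies Scott-continuous; the paper merely asserts this, you prove it). Where you genuinely diverge is in how the two nontrivial implications treat general measures. The paper proves both $(1)\Rightarrow(2)$ and $(3)\Rightarrow(1)$ only for \emph{simple} valuations -- the first via the transport numbers of the Splitting Lemma (Theorem~\ref{thm:split}), the second via a hand-built dyadic family of Scott-open upper sets yielding a single monotone Lawson-continuous function that agrees with $\chi_U$ on the finite set $F\cup G$ of support points, so that $\int f\,d\mu=\mu(U)$ and $\int f\,d\nu=\nu(U)$ hold exactly and no limit interchange is needed -- and then disposes of the general case with a one-line appeal to density of simple measures and closedness of the order in $\V D$. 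You instead argue directly for arbitrary sub-probability measures: the layer-cake (Choquet) formula gives $(1)\Rightarrow(2)$ without the Splitting Lemma, and for $(3)\Rightarrow(1)$ you separate $\ua w$ (for $w\ll x$, $w\in U$) from $D\setminus U$ by Nachbin's monotone Urysohn lemma, obtaining a directed family $\mathcal F_U$ with pointwise supremum $\chi_U$, and then interchange the directed supremum with the integral using Scott-continuity of the valuation. The trade-off: the paper's route stays finite and combinatorial but leaves the simple-to-general passage (which itself needs a continuity-of-integration argument) essentially unproved, whereas your route is self-contained at the level of arbitrary measures but imports Nachbin's theorem (justified, since coherence plus the closedness of the order in the Lawson topology of a continuous domain makes $(D,\Lambda,\leq)$ a compact ordered space) and the standard sup--integral interchange, which your Choquet-formula reduction correctly covers. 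Both proofs use coherence for exactly the same purpose, namely the Urysohn-type separation producing monotone Lawson-continuous test functions.
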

\begin{proof}
We show the result for simple measures, which then implies it holds for all measures since $\V D$ is a domain -- so its partial order is (topologically) closed -- in which the simple measures are dense.
  
So, suppose $\mu = \sum_{x\in F} r_x\delta_x $ and $\nu = \sum_{y\in G} s_y \delta_y$ are simple measures on $D$. 
\medbreak
\noindent \textbf{(i) implies (ii):} Suppose that $\mu \leq \nu \in \V D$. If $f\colon D\to \Re_+$, then $\int f d\mu = \sum_{x\in F} r_x\cdot f(x)$ and $\int f d\nu = \sum_{y\in G} s_y\cdot f(y)$. Since $\mu\leq \nu$, there are $t_{x,y}\in [0,1]$ guaranteed by the Splitting Lemma~\ref{thm:split}, and so
\begin{eqnarray*}
\int f d\mu & = & \sum_{x\in F} r_x\cdot f(x) = \sum_{x\in F}\sum_{y\in G} t_{x,y}\cdot f(x)\\
& \leq & \sum_{x\in F}\sum_{y\in G} t_{x,y}\cdot f(y) \leq \sum_{y\in G} s_y\cdot f(y) = \int f d\nu,
\end{eqnarray*}
where the first inequality follows from the facts that $t_{x,y} > 0$ implies $x\leq y$ and $f$ is monotone. This shows (i) implies (ii).
\medbreak
\noindent \textbf{(ii) implies (iii):} Since monotone Lawson continuous maps are Scott continuous, this is obvious.
\medbreak
\noindent \textbf{(iii) implies (i):} Let $U$ be a Scott-open subset of $D$, and let $H = (F\cup G)\setminus U$. Using the facts that $D$ is coherent, so its Lawson topology is compact Hausdorff, and that $H$ is finite, we define a family $\{ U_d\mid d\in Dyad\}$ of Scott-open upper sets indexed by $Dyad$, the dyadic numbers in $[0,1]$ as follows: We let $U_0 = D\setminus \da H, U_1 = U$, and for $d < d'$, we recursively choose $U_d\supseteq \overline{U_{d'}}^\Lambda$, the Lawson-closure of $U_{d'}$. Then define a mapping\\[1ex]
\centerline{$f\colon D\to [0,1]$ by $f(x) = 0$ if $x\in \da H$, and otherwise $f(x) = \inf \{ d\mid x\in U_d\}$.}\\[1ex] 
Since the family $\{U_d\}$ consists of Scott-open sets satisfying $U_d\supseteq \overline{U_{d'}}^\Lambda$ for $d < d'$, this mapping is monotone, and the standard Urysohn Lemma argument (cf.\ Theorem 33.1~\cite{munkres}) shows it is Lawson continuous. So, $\int f d\mu\leq \int f d\nu$ by assumption. 

Since $\mu$ and $\nu$ are simple, $\int f d\mu = \sum_{x\in F\setminus H} r_x\cdot f(x)$ and $\int f d\nu = \sum_{y\in G\setminus H} s_y\cdot f(y)$. By construction, $(F\cup G)\setminus H \subseteq U = U_1$, so\\[1ex] 
$\sum_{x\in F\setminus H} r_x\cdot f(x) = \sum_{x\in F\setminus H} r_x = \mu(U)$, and
$\sum_{y\in G\setminus H} s_y\cdot f(y) = \sum_{y\in G\setminus H} s_y = \nu(U)$,\\[1ex]  and so 
$\mu(U) = \int f d\mu \leq \int f d\nu = \nu(U)$, as required.
\end{proof}

Proposition~\ref{weak-law} tells us we can realize the domain order structure of $\V D$ on \textsf{SProb}$\, D$ using the classical approach of integration against functions $f\colon D\to \Re_+$. Put another way, the mapping $\psi\colon \V D\to \textsf{SProb}\, D$ that realizes a valuation as a sub-probability measure is not only a bijection, but an order isomorphism if one equips \textsf{SProb}$\, D$ with the order described in the Proposition. It should be noted the same is true for \textsf{Prob}$\, D$, with essentially the same proof. Moreover, the isomorphisms are also homeomorphisms:

\begin{theorem}\label{thm:weak=laws}~\cite{weaktop,vanbreug}
If $D$ is a coherent domain, then the Lawson topology on $\textsf{SProb}\, D$ is the weak topology. The same holds for $\textsf{Prob}\, D$. 
\end{theorem}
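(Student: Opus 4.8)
\emph{Proof proposal.} Via the order isomorphism $\psi\colon\V D\to\SPr D$ furnished by Proposition~\ref{weak-law} and Theorem~\ref{thm:split}, it is equivalent to show that on the valuations domain $\V D$ the Lawson topology coincides with the weak topology, the latter being generated by the maps $\mu\mapsto\int f\,d\mu$ for $f\colon D\to\Re$ bounded and Lawson continuous. As $D$ is coherent, the Jung--Tix theorem~\cite{jungtix} makes $\V D$ a coherent domain, so $(\V D,\Lambda)$ is compact and (being a domain) Hausdorff; meanwhile the weak topology on $\V D$ is Hausdorff, since by the Riesz representation theorem the continuous real-valued functions on the compact Hausdorff space $(D,\Lambda)$ --- among them the constant $1$, which registers the total mass --- separate sub-probability measures. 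A continuous bijection from a compact space onto a Hausdorff space is a homeomorphism, so it suffices to check that the identity map $(\V D,\Lambda)\to(\V D,\mathrm{weak})$ is continuous, i.e.\ that $\mu\mapsto\int f\,d\mu$ is Lawson continuous on $\V D$ for each bounded Lawson-continuous $f$. The assertion for $\Pr D$ then follows verbatim, applied to $\V_1 D$, which is coherent by Proposition~\ref{prop:split}.

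The first move is to reduce to \emph{monotone} $f$. Let $\mathcal A$ be the subalgebra of $C\bigl((D,\Lambda),\Re\bigr)$ generated by the (automatically bounded) monotone Lawson-continuous functions; since products of nonnegative monotone Lawson-continuous functions are again such, $\mathcal A=\{\,g-h\mid g,h\ \text{nonnegative, monotone, Lawson continuous}\,\}$. It contains the constants and separates points of $D$: for $x\not\leq y$, choose $x'\ll x$ with $x'\not\leq y$ (possible as $x=\sup\Da x$), observe $\da y\cap\Ua x'=\emptyset$, and run the Urysohn-type construction from the proof of Proposition~\ref{weak-law} on $U=\Ua x'$ and the finite set $\{y\}$ to get a monotone Lawson-continuous $f$ that is $1$ on $\Ua x'$ and $0$ on $\da y$, so $f(x)\neq f(y)$. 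By Stone--Weierstrass, $\mathcal A$ is dense in $C\bigl((D,\Lambda),\Re\bigr)$; and because $\bigl|\int f\,d\mu-\int f'\,d\mu\bigr|\leq\|f-f'\|_\infty\,\mu(D)\leq\|f-f'\|_\infty$, a uniform approximation of $f$ by members of $\mathcal A$ approximates $\mu\mapsto\int f\,d\mu$ uniformly in $\mu$ by maps $\mu\mapsto\int g\,d\mu-\int h\,d\mu$. A uniform limit of Lawson-continuous maps being Lawson continuous, it remains to prove that $\mu\mapsto\int g\,d\mu$ is Lawson continuous on $\V D$ for $g$ nonnegative, bounded, monotone and Lawson continuous.

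For such $g$, \emph{lower} semicontinuity is classical: $g$ is Scott continuous, and integration of Scott-continuous functions against continuous valuations preserves directed suprema, so $\mu\mapsto\int g\,d\mu$ is Scott continuous, a fortiori lower semicontinuous in the Lawson topology. The remaining step --- which I expect to be the main obstacle --- is \emph{upper} semicontinuity in the Lawson topology. Putting $c=\|g\|_\infty$, this amounts to Lawson lower semicontinuity of $\mu\mapsto\int(c-g)\,d\mu$; by the layer-cake formula $\int(c-g)\,d\mu=\int_0^c\mu(W_t)\,dt$ with each $W_t=\{\,z\mid c-g(z)>t\,\}$ a Lawson-open lower set, and $\mu(W_t)=\mu(D)-\mu(Q_t)$ with $Q_t=D\setminus W_t$ a Lawson-closed upper set (equivalently, Scott-compact saturated). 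Since $\mu\mapsto\mu(D)$ is itself Lawson continuous --- its upper semicontinuity holding because $\{\mu\mid\mu(D)<r\}=\V D\setminus{\ua}(r\,\delta_\perp)$ is a basic weak-lower-open set --- and since lower semicontinuity passes through the outer integral (Fatou), the whole matter reduces to showing $\mu\mapsto\mu(Q)$ is Lawson upper semicontinuous for every Lawson-closed upper set $Q$. Now $Q$ is the filtered intersection of the Scott-open sets containing it, so $\mu(Q)=\inf\{\mu(U)\mid U\in\O(D),\,Q\subseteq U\}$ for every sub-probability measure $\mu$: an infimum of the Scott-continuous, hence Lawson lower semicontinuous, maps $\mu\mapsto\mu(U)$. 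Converting this pointwise infimum into genuine Lawson upper semicontinuity of $\mu\mapsto\mu(Q)$ is the crux: given $\mu$ with $\mu(Q)<r$ one must exhibit a single basic Lawson-open neighbourhood $\Ua\nu\setminus{\ua}F$ of $\mu$ on which $\nu'\mapsto\nu'(Q)$ stays below $r$. Here I would invoke coherence --- so that $D$ and $\V D$ are Lawson compact, making the outer approximation of $Q$ uniform over such a neighbourhood --- together with the precise description of the basic sets $\Ua\nu$ and ${\ua}F$ in terms of transport numbers provided by the refined Splitting Lemma, Proposition~\ref{prop:split}. Once this upper-semicontinuity lemma is established the theorem follows as above, and the $\Pr D$ case is entirely analogous.
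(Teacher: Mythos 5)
Your high-level architecture is reasonable, and it is worth noting that the paper itself offers no proof of this statement -- it is quoted from Edalat~\cite{weaktop} and van Breugel et al.~\cite{vanbreug}, and is essentially Theorem~\ref{thm:topvsweak}/Corollary~\ref{cor:topvsweak} restated -- so your attempt has to be judged as a self-contained argument. The frame is fine as far as it goes: coherence of $\V D$ via Jung--Tix gives Lawson compactness, Hausdorffness of the weak topology lets you win by a compact-to-Hausdorff continuous bijection, Stone--Weierstrass legitimately reduces to monotone Lawson-continuous $g$ (mirroring Proposition~\ref{weak-law}(iii)), and the lower-semicontinuity half is genuinely easy because $g$ is then Scott continuous.

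The problem is that the proof stops exactly where the content of the theorem lives. The lemma you defer -- that $\mu\mapsto\mu(Q)$ is Lawson upper semicontinuous on $\V D$ for every Lawson-closed upper (Scott-compact saturated) $Q$, equivalently that $\{\mu\mid\mu(Q)\geq r\}$ is Lawson closed -- is not a finishing touch but the whole difficulty, and the route you sketch does not close it. Outer regularity $\mu(Q)=\inf\{\mu(U)\mid Q\subseteq U\in\O(D)\}$ (which itself needs the Borel extension of the valuation plus continuity along the filtered family of Scott-open neighbourhoods of $Q$, unproblematic only in the countably based case) exhibits $\mu\mapsto\mu(Q)$ as a pointwise infimum of Lawson lower semicontinuous maps, and such an infimum is in general not upper semicontinuous; the obvious candidate neighbourhood of $\mu$, namely $\{\nu\mid\nu(U)<r\}$ for a Scott-open $U\supseteq Q$ with $\mu(U)<r$, is not Lawson open, since $\nu\mapsto\nu(U)$ is only Scott continuous and that set is neither of the two basic shapes $V$ or $\V D\setminus{\ua}\sigma$. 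What is actually required is something like: interpolate $Q\subseteq\Ua F\subseteq{\ua}F\subseteq U$ with $F$ finite and then prove that $\{\nu\mid\nu({\ua}F)\geq r\}$ is Scott-compact saturated (hence Lawson closed) in $\V D$ -- or, equivalently, invoke the stable-compactness analysis of the valuation powerdomain or the portmanteau-type characterization of Theorem~\ref{thm:topvsweak}. None of this follows from "coherence plus the transport numbers of Proposition~\ref{prop:split}" without a substantial argument; it is precisely what the cited papers prove. As written, your proposal establishes only that the topology generated by the maps $\mu\mapsto\int g\,d\mu$, $g$ monotone, from below is coarser than the Scott (hence Lawson) topology; the other half of the identification remains open.
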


\begin{remark}
Thus, Theorem~\ref{thm:weak=laws} says we can regard $\V D$ and \text{SProb}$\, D$ as one and the same, from a domain-theoretic perspective,  and the same holds for $\V_1D$ and \textsf{Prob}$\, D$. We sometimes will ``confuse" these two views of valuations / sub-probability measures without explicitly noting the identification. 
\end{remark}

\section{Domain Mappings from the Cantor Tree}\label{sec:cantor}
The \emph{Cantor tree} is the family $\CT = \{0,1\}^*\cup \{0,1\}^\omega$ of finite and infinite words over $\{0,1\}$ in the prefix order. Equivalently, $\CT$ is the full rooted binary tree which is directed complete, and since it is countably based, this means every directed supremum can be achieved as the supremum of an increasing countable chain. $\CT$ will play the role of the unit interval in our approach to generalizing Skorohod's Theorem to the domain setting. For that, we need some preliminary definitions.  

 An \emph{antichain} is a non-empty subset $A\subseteq \CT$ satisfying $a,b \in A$ implies $a$ and $b$ do not compare in the prefix order. 
%

\begin{notation}\label{not:nota1}
We establish some notation for what follows:
\begin{enumerate}
\item We let $\C_n \simeq 2^n$ be the set of $n$-bit words in $\CT$, which forms an antichain. Recall that there is a well-defined retraction mapping $\pi_n\colon \C\to \C_n$ from the Cantor set onto $\C_n$ sending each infinite binary word to its $n$-bit prefix. In addition, if $m \leq n$, then there is a map $\pi_{m,n}\colon \C_n \to \C_m$ that sends each $n$-bit word to its $m$-bit prefix. 
\item Since $\C_n$ is finite, the set $\da \C_n\subseteq \CT$ is Scott closed. Then both $\pi_n$ and $\pi_{m,n}$ extend to mappings $\PI_n\colon \CT\to \da \C_n$ and $\wid{\pi_{m,n}}\colon \da\C_n\to \da\C_m$ that send each element of $\CT$ to its largest prefix in $\da C_n$, respectively its largest prefix in $\C_m$.   Note that $\PI_m = \wid{\pi_{m,n}}\circ\PI_m$ if $m\leq n$.

\item The projection $\pi_n$ has a corresponding embedding $\iota_n\colon \C_n\to \C$ sending an $n$-bit word to the infinite word all of whose coordinates $m> n$ are $0$. Then $\pi_n\circ \iota_n = 1_{\C_n}$ and $ \iota_n\circ\pi_n \leq 1_\C$ form an embedding-projection pair, where we order $\C\simeq 2^\omega$ in the lexicographic order.  
%

\item The set $\C_n$  of $n$-bit words also can be given the lexicographic order. Then each dyadic rational $r\in [0,1]$ that can be expressed as $r = {k_r\over 2^n}$, the interval in $C_n$ from $0$ to $r$, or, equivalently, the first $k_r$ $n$-bit words. 

Moreover, each sequence of such dyadics, $r_1,\ldots, r_k$ whose sum is at most $1$ can be expressed as successive intervals, $r_1 = [0,\ldots, k_1], r_2 = [k_1+1,\ldots, k_1+k_2]$, etc. 
\end{enumerate}
\end{notation}

The proof of the next result relies on a version of Hall's Marriage Problem~\cite{hall}. The original version concerns a bipartite graph $G = (X + Y, E)$, where $X$ and $Y$ are the disjoint sets of nodes and all edges in $E$ connect a node of $X$ to one of $Y$. A \emph{matching} is a subset $M\subseteq E$ satisfying each node of $X$ has at most one edge in $M$, and likewise for $Y$, and no two edges in $M$ share any common nodes. A \emph{perfect matching} is one where every node of $X$ and every node of $Y$ has an edge in $M$. Hall's Marriage Problem states that there is a perfect matching iff, for each subset $S\subseteq X$ there are at least $|S|$ edges in $E$ from some node of $S$ to some node of $Y$. 

The generalization we need is for the case of matching each node of $X$ to $k$ nodes in $Y$ so that no two edges in $M$ share any nodes of $Y$ (and so each node of $X$ has edges to $k$ distinct nodes in $Y$ none of which is shared with any other node of $X$). The generalization of Hall's Marriage Problem states that such a $k$ matching exists -- i.e.,  there is a subset $M\subseteq E$ satisfying each node of $X$ has at least $k$ edges in $M$, and each node of $Y$ has at most one edge in $M$ -- iff, for each subset $S\subseteq X$, there are at least $k\cdot|S|$ edges in $E$ connecting some node of $S$ to a node of $Y$. The generalization follows from the original version by first making $k$ copies of each node of $X$, duplicating the edges in $E$ for each of these new nodes, applying the original version, and then collapsing the resulting perfect matching back to the original graph $G$. 

The following is key to our results:

\begin{proposition}\label{prop:key}
Let $D$ be a domain and let $\mu = \sum_{x\in F} r_x \delta_x\leq  \sum_{y\in G} s_y\delta_y =\mu'$ be simple probability measures on $D$ with $r_x, s_y\in Dyad$ for every $x$ and $y$. 
Suppose further that $f_m\colon \C_m\to D$ satisfies $f_{m*}\, \nu_m = \mu$, where $\nu_m$ is normalized counting measure on $\C_m$. 
Then there are $n > m$ and $f_n\colon \C_n\to D$ satisfying $f_{n*}\, \nu_n = \mu'$ and $f_m\circ  \pi_{m,n}\leq f_n$.
\end{proposition}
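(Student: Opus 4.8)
The plan is to translate the hypothesis $\mu \leq \mu'$ into combinatorial data via the Splitting Lemma, then use the generalized Hall's Marriage Theorem to realize that data as a refinement of the antichain $\C_m$ inside some $\C_n$. First I would invoke Proposition~\ref{prop:split}(i): since $r_x, s_y \in Dyad$, the transport numbers $\{t_{x,y}\}$ guaranteed by the Splitting Lemma~\ref{thm:split} may be taken in $Dyad$ as well. Choose $n > m$ large enough that every $r_x$, every $s_y$, and every $t_{x,y}$ lies in $Dyad_n$; then write $r_x = a_x/2^n$, $s_y = b_y/2^n$, $t_{x,y} = c_{x,y}/2^n$ with integers $a_x, b_y, c_{x,y}$. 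The Splitting Lemma relations become $a_x = \sum_y c_{x,y}$, $\sum_x c_{x,y} \le b_y$, and $c_{x,y} > 0 \Rightarrow x \le y$. Also, since $f_m{}_* \nu_m = \mu$, the fiber $f_m^{-1}(x) \subseteq \C_m$ has exactly $r_x \cdot 2^m$ elements for each $x \in F$ (in particular $r_x \in Dyad_m$, so we may assume $n \ge m$ is chosen accordingly).

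Next I would pass from $\C_m$ to $\C_n$ along $\pi_{m,n}$: each word $w \in \C_m$ has exactly $2^{n-m}$ extensions in $\C_n$, and $f_m \circ \pi_{m,n}$ is the function on $\C_n$ that is constant equal to $f_m(w)$ on the block of extensions of $w$. So the fiber over $x$ in $\C_n$ under $f_m \circ \pi_{m,n}$ has $r_x \cdot 2^n = a_x$ elements, grouped into blocks of size $2^{n-m}$. The goal is to define $f_n \colon \C_n \to D$ with $f_n \ge f_m \circ \pi_{m,n}$ and $f_n{}_* \nu_n = \mu'$. The idea is: on the fiber $(f_m \circ \pi_{m,n})^{-1}(x)$, which has $a_x = \sum_y c_{x,y}$ elements, I want to reassign $c_{x,y}$ of those elements to the value $y$ (legitimate since $c_{x,y} > 0 \Rightarrow x \le y$, so $f_n \ge f_m \circ \pi_{m,n}$ pointwise) and leave the rest — but actually $\sum_y c_{x,y} = a_x$ exactly, so the whole fiber gets redistributed among the $y$'s above $x$. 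The resulting count of points mapping to $y$ is $\sum_x c_{x,y} \le b_y = s_y \cdot 2^n$; the leftover $b_y - \sum_x c_{x,y}$ points needed to make $f_n{}_*\nu_n = \mu'$ exactly must come from somewhere. Here is where I would use the generalized Hall / matching argument as in the paragraph preceding the Proposition: I want to match up the residual mass. One clean way: enlarge $n$ slightly more and treat the assignment as a bipartite matching problem where $X$ indexes "slots to be filled with value $y$ weighted appropriately" and $Y$ indexes the available words of $\C_n$; Hall's condition is exactly the inequality $\sum_x c_{x,y} \le s_y 2^n$ together with $\sum_y s_y = 1 = \sum_x r_x$, which guarantees a consistent global assignment. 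The bookkeeping that makes Hall's condition hold for every subset $S$ is precisely the modularity/total-mass balance inherited from the Splitting Lemma.

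The main obstacle, as I see it, is the last point: ensuring that the per-fiber redistributions can be carried out \emph{simultaneously} and consistently across all $x \in F$ so that the final pushforward is exactly $\mu'$ and not merely $\le \mu'$ in some coarse sense — i.e., that the residual masses $s_y - \sum_x t_{x,y}$ (which are non-negative and dyadic, summing to $1 - \sum_x r_x = 0$, hence all zero when $\mu, \mu'$ are probability measures!) — wait: since both are probability measures, $\sum_y s_y = \sum_x r_x = 1$ forces $\sum_x \sum_y t_{x,y} = 1 = \sum_y s_y$, so in fact $\sum_x t_{x,y} = s_y$ for every $y$, and there is \emph{no} residual. This simplifies matters considerably: the whole content is to partition each block $(f_m \circ \pi_{m,n})^{-1}(x) \subseteq \C_n$ (of size $a_x$) into pieces of sizes $c_{x,y}$ indexed by $\{y : c_{x,y} > 0\}$, assign value $y$ to the $y$-th piece, and check that globally $y$ receives $\sum_x c_{x,y} = b_y$ words, so $f_n{}_* \nu_n = \sum_y (b_y / 2^n)\delta_y = \mu'$. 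Monotonicity $f_m \circ \pi_{m,n} \le f_n$ holds on each block since $x \le y$ whenever $c_{x,y} > 0$. Thus the Hall-type argument is only needed in the more general subprobability setting flagged in the paper; in the stated probability-measure case the construction is a direct counting partition once $n$ is chosen to clear all denominators. I would present it that way, remarking that the generalized marriage theorem handles the subprobability variant where genuine residual mass must be matched.
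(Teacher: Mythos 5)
Your final argument is correct, and it reaches the conclusion by a genuinely more elementary route than the paper. Both proofs start the same way: invoke the Splitting Lemma with dyadic transport numbers $t_{x,y}$ (via Proposition~\ref{prop:split}), pick $n>m$ clearing all denominators, and note that each fiber $(f_m\circ\pi_{m,n})^{-1}(x)\subseteq\C_n$ has exactly $r_x\,2^n=\sum_y c_{x,y}$ words. The paper then packages the redistribution as a bipartite graph whose $Y$-side consists of ``units of mass'' $(j,\delta_y)$ and appeals to a generalized Hall's Marriage Theorem to produce a $2^{n-m}$-matching, i.e.\ a surjection $\rho$ with fibers of size $2^{n-m}$, before composing with bijections onto the blocks of $\pi_{m,n}$. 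Your key observation --- that because $\mu$ and $\mu'$ are \emph{probability} measures the slack $s_y-\sum_x t_{x,y}$ is nonnegative, dyadic, and sums to $1-\sum_x r_x=0$, hence vanishes identically --- shows that no matching theorem is needed: you simply partition each fiber over $x$ into pieces of sizes $c_{x,y}$ (possible since they sum to the fiber's cardinality), assign the value $y$ to the $y$-piece, and count; monotonicity follows from $c_{x,y}>0\Rightarrow x\leq y$, and the pushforward is exactly $\mu'$ because $\sum_x c_{x,y}=b_y$. This is a clean simplification of the paper's combinatorial step (indeed the paper itself quietly uses the equality $s_y=\sum_x t_{x,y}$ to make its ``matching'' a total function), and it is where your write-up should land; the middle portion of your proposal, which worries about residual mass and reaches for Hall's condition, is a false start that you correctly retract, so excise it rather than present it. Your closing remark that a Hall-type argument would only become relevant when genuine residual mass must be placed (the subprobability setting) is a fair assessment of what the heavier machinery buys, though note the paper handles subprobability measures later by lifting to $D_\perp$ rather than by redoing this proposition.
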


\begin{proof}
Since $\mu\leq \mu'$ are simple probability measures, the Splitting Lemma (Theorem~\ref{thm:split}) implies there are transport numbers $\{t_{x,y}\}$ satisfying $r_x = \sum_y t_{x,y}$ and $\sum_x r_x = s_y$ for all $x,y$. Moreover, since $r_x,s_y\in Dyad$ for all $x,y$, the transport numbers $t_{x,y}\in Dyad$ as well, as a proof similar to that for Proposition~\ref{prop:split} shows. We then choose $n > m$ such that $r_x, s_y, t_{x,y}\in Dyad_n$, for all $x,y$, where we recall $Dyad_n = \{{k\over 2^n}\mid 0\leq k\leq 2^n\}$ (cf.~\ref{nota:dyad}). Then\\ 
\centerline{$\mu = \sum_{x\in F} r_x\delta_x$ with $r_x = {k_x\over 2^n}$, so $\mu = \sum_{x\in F}  {k_x\over 2^n} \delta_x$}\\
and\\ \centerline{$\mu' = \sum_{y\in G} s_y\delta_y$ with $s_y = {k_y\over 2^n}$, so $\mu' = \sum_{y\in G}  {k_y\over 2^n} \delta_y$.}\\ 
Since $t_{x,y} \in Dyad_n$ for each $x,y$, it follows that, for each $x\in F$, the family $t_{x,y}$ distributes the mass $r_x = {k_x\over 2^n} $ associated to $\delta_x$ in $\mu$ to ${k_x\over 2^n}$ of the mass in $\mu'$. 

Since $f_m\colon \C_m\to D$ satisfies $f_{m*}\,\nu_m=\mu$, 
we also have $\mu = \sum_{x\in F} {k'_x\over 2^m} \delta_{x}$, and so 
${k'_x\over 2^m} = {k_x\over 2^n}$ for each $x\in F$. That is, $2^{n-m}k'_x = k_x$ for each $x\in F$. 
So, each $i\in \C_m$ is sent via $i\stackrel{f_{m*}}{\longrightarrow}  {1\over 2^m}\delta_{f(i)}\stackrel{\{t_{x,y}\}_y}{\longrightarrow} \mu'$ to ${2^{n-m}\over 2^n}$ of the mass of $\mu'$. 

We define a bipartite graph with node sets $X, Y$ where $X = \C_m$ and 
$Y = \bigcup_{y\in G} \{ (i,\delta_y)\mid 0<i\leq k_y\}$, and whose edges are\\ 
\centerline{$E = \{(i,(j,\delta_y))\mid t_{f(i),y}\ \text{sends mass at }\ \delta_{f(i)}\ \text{to } (j,\delta_y)\}$.}\\
Then each node $i\in X =\C_m$ has $2^{n-m}$ edges incident to it in $E$, and each node $(j,\delta_y)\in Y$ has exactly one edge incident to it. By construction, for each $S\subseteq X$, there are $2^{n-m}\cdot |S|$ edges in $E$ from some node in $S$ to some node in $Y$. The generalization of Hall's Marriage Problem described above then implies there is a $2^{n-m}$-matching, and since $\mu, \mu'$ are probability measures, $s_y = \sum_x t_{x,y}$ for each $y\in G$, so there is a total function $\rho\colon Y\twoheadrightarrow X$ satisfying $|\rho^{-1}(i)| = 2^{n-m}$ for each $i\in X$. 

On the other hand, the projection $\pi_{m,n}\colon \C_n\to \C_m$ sends $2^{n-m}$ $n$-bit words to each element of $\C_m$, so for each $i\in \C_m$ there is a bijection $b_i\colon \pi^{-1}_{m,n}(i)\to \rho^{-1}(i)$. Taken together, these $b_i$s define a map $g_n\colon C_n\to Y$, and if we let $p_Y\colon Y \to D$ by $p_Y(j,\delta_y) = y$ and define $f_n = p_Y\circ g_n$, then $f_m\circ \pi_{n,m} \leq f_n$ by construction. That $f_{n*}\, \nu_n = \mu'$ follows from the fact that $|f_n^{-1}(y)| = {k_y\over 2^n} = s_y$ for each $y\in G$, again by construction. 
\end{proof}

For our next result, we need some information about the weak topology on $\textsf{SProb}\, D$. The result we need follows from the Portmanteau Theorem~\ref{thm:port}~(cf., e.g.,~\cite{billings}), and a proof can be found as Corollaries 15 and 16 in~\cite{vanbreug}:

\begin{theorem}\label{thm:topvsweak} Let $D$ be a countably based coherent domain endowed with the Borel $\sigma$-algebra. Then the weak topology on $\textsf{SProb}\,D$ is the same as the Lawson topology on $\textsf{SProb}\,D$ when viewed as a family of valuations. 

Moreover, for a family $\mu_n,\mu\in\textsf{SProb}\,D$, the following are equivalent:
\begin{enumerate}
\item $\mu_n\to_w\mu$
\item Both of the following hold:
\begin{itemize}
\item $\lim\sup_n \mu_n(E)\leq \mu(E)$ for all finitely generated upper sets $E\subseteq D$.
\item $\lim\inf_n \mu_n(O)\geq \mu(O)$ for all Scott-open sets $O\subseteq D$.
\end{itemize}
\item $\lim\inf_n \mu_n(O)\geq \mu(O)$ for all Lawson-open sets $O\subseteq D$.
\end{enumerate}
\end{theorem}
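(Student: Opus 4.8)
The plan is to read Theorem~\ref{thm:topvsweak} as a domain-theoretic Portmanteau theorem and obtain it from the classical Portmanteau Theorem~\ref{thm:port} applied to $D$ with its Lawson topology. The first clause --- that the weak topology on $\SPr D$ coincides with the Lawson topology --- is already in hand, being Theorem~\ref{thm:weak=laws}; and since $D$ is countably based and coherent, its Lawson topology is compact and metrizable, so $\SPr D\simeq\V D$ is itself a compact metrizable space, and the standard tools (regularity of Borel measures, second countability, convergence-determining classes) are available. So the content is the three-way equivalence, and everything below takes place in $D$ equipped with the Lawson topology, whose open sets are precisely the Lawson-open sets.

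First the easy directions. The equivalence $(1)\Leftrightarrow(3)$ is classical Portmanteau: $\mu_n\to_w\mu$ iff $\liminf_n\mu_n(O)\ge\mu(O)$ for every Lawson-open $O$. One mild point to handle is that we should know $\mu_n(D)\to\mu(D)$ in order to pass freely between this open-set form and the closed-set form $\limsup_n\mu_n(C)\le\mu(C)$ for Lawson-closed $C$; this holds because the constant $1$ is bounded and Lawson-continuous (so it is part of weak convergence), and --- in the powerdomain setting, where $D$ has a least element $\perp$ --- it is also forced by $(2)$, by taking the finitely generated upper set $E=\ua\{\perp\}=D$ together with the Scott-open set $O=D$. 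Granting this, $(1)\Rightarrow(2)$ is immediate: Scott-open sets are Lawson-open, which gives the $\liminf$ clause of $(2)$; and a finitely generated upper set $\ua F$ is already closed in the weak lower topology, hence Lawson-closed, so the closed-set form of Portmanteau gives $\limsup_n\mu_n(\ua F)\le\mu(\ua F)$.

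The real work is the converse $(2)\Rightarrow(3)$ (equivalently $(2)\Rightarrow(1)$). The strategy is to use the family $\mathcal B=\{\,U\setminus\ua F\mid U\ \text{Scott-open},\ F\subseteq D\ \text{finite}\,\}$, which by the definition of the Lawson topology is a base for it, is countable once a countable basis of the Scott topology is fixed, and is closed under finite intersections since $(U_1\setminus\ua F_1)\cap(U_2\setminus\ua F_2)=(U_1\cap U_2)\setminus\ua(F_1\cup F_2)$. By the usual convergence-determining-class criterion it then suffices to establish $\liminf_n\mu_n(W)\ge\mu(W)$ for each $W=U\setminus\ua F\in\mathcal B$, and thence for the finite unions of such sets that approximate a given Lawson-open $O$ from inside (these exist by regularity and second countability). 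The difficulty is visible already for a single $W$: writing $\mu_n(W)=\mu_n(U)-\mu_n(U\cap\ua F)$ and feeding in the two clauses of $(2)$ delivers only $\liminf_n\mu_n(W)\ge\mu(U)-\mu(\ua F)$, which undershoots the target $\mu(W)=\mu(U)-\mu(U\cap\ua F)$ by the excess mass $\mu(\ua F\setminus U)$.

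I expect this excess to be the main obstacle, with the fix being a continuity-set argument: since we only need the inequality for a set $W\subseteq O$ with $\mu(W)$ within an arbitrary $\varepsilon$ of $\mu(O)$, there is enough freedom in choosing $W$ --- drawing the finitely many generators from the countable basis of $D$ --- to arrange that each boundary set $\ua F\setminus U=\ua F\cap(D\setminus U)$ that arises is $\mu$-null, since only countably many of the candidate ``levels'' can carry positive $\mu$-mass, exactly as in the classical choice of a threshold $t$ with $\mu(\{f=t\})=0$. With the excess killed, the bound becomes $\liminf_n\mu_n(W)\ge\mu(W)$; a short induction on the number of sets in the union lifts it to the finite unions, and letting $\varepsilon\to0$ yields $(3)$, which together with the mass convergence noted above gives $(1)$, closing the cycle. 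One could of course also simply invoke van Breugel's Corollaries~15 and~16~\cite{vanbreug}, of which this is a restatement.
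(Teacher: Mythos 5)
The paper offers no proof of this theorem to compare against: it is imported wholesale, with the proof attributed to Corollaries 15 and 16 of~\cite{vanbreug}, and its first clause is just a restatement of Theorem~\ref{thm:weak=laws}. So your closing remark --- ``one could simply invoke van Breugel'' --- is in fact exactly what the paper does. Within your sketch, the reduction of the first clause to Theorem~\ref{thm:weak=laws} and the direction $(1)\Rightarrow(2)$ (Scott-open sets are Lawson-open, $\ua F$ is Lawson-closed, and $f\equiv 1$ gives convergence of total masses) are sound.

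As a standalone proof, however, two steps do not hold up. First, you close the cycle with ``$(1)\Leftrightarrow(3)$ is classical Portmanteau,'' but for \emph{sub}-probability measures only $(1)\Rightarrow(3)$ is classical: the open-set lower bounds alone do not control total mass, and $(3)$ does not force $\mu_n(D)\to\mu(D)$ the way $(1)$ and $(2)$ do. On the one-point domain, $\mu_n\equiv\delta_x$ and $\mu=0$ satisfy $(3)$ (and the liminf clause of $(2)$) vacuously, yet $\mu_n\not\to_w\mu$; so your cycle $(1)\Rightarrow(2)\Rightarrow(3)\Rightarrow(1)$ breaks exactly at $(3)\Rightarrow(1)$, which is the delicate point separating $\textsf{SProb}\,D$ from $\textsf{Prob}\,D$ (where Corollary~\ref{cor:topvsweak} applies) and cannot be waved through as classical. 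Second, the hard direction $(2)\Rightarrow(3)$ is only a plan, and the proposed continuity-set fix does not transfer: the excess $\mu(\ua F\setminus U)$ is not a boundary that can be killed by perturbing a threshold, because the candidate sets $\ua F\setminus U$ arising from different choices of $U$ and $F$ are not a nested one-parameter family with pairwise disjoint levels, so the ``only countably many can carry positive mass'' argument has no purchase; for a fixed inner approximation $U\setminus\ua F\subseteq O$ the set $\ua F$ may be forced to carry mass outside $U$ no matter how its generators are drawn from the basis. A workable route is order-theoretic rather than metric: prove $(2)\Rightarrow(1)$ directly by testing against monotone Lawson-continuous $f$ (as constructed in Proposition~\ref{weak-law}): $\{f>t\}$ is Scott-open, $\{f\geq t\}$ is compact saturated, hence (by coherence and the countable basis) a filtered intersection of finitely generated upper sets, so the two clauses of $(2)$ give $\mu_n(\{f>t\})\to\mu(\{f>t\})$ for all but countably many $t$; dominated convergence and Stone--Weierstrass (the linear span of monotone Lawson-continuous maps is dense in the continuous functions on $(D,\Lambda)$) then yield weak convergence, after which $(1)\Rightarrow(3)$ is the easy Portmanteau direction.
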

\begin{corollary}\label{cor:topvsweak}
If $D$ is a countably based coherent domain, then the weak topology on $\textsf{Prob}\, D$ coincides with the Lawson topology. 
\end{corollary}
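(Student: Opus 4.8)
The plan is to bootstrap from Theorem~\ref{thm:topvsweak}, which already gives the coincidence of the weak and Lawson topologies on $\textsf{SProb}\,D$, by transferring it along the inclusion $\textsf{Prob}\,D = \V_1 D \hookrightarrow \V D = \textsf{SProb}\,D$. The first step is purely definitional: the weak topology on $\textsf{Prob}\,D$ is generated by the evaluations $\mu \mapsto \int f\,d\mu$ with $f\colon D\to\Re_+$ bounded and continuous, and these are exactly the restrictions to $\V_1 D$ of the functionals generating the weak topology on $\textsf{SProb}\,D$. Hence the weak topology on $\textsf{Prob}\,D$ is the subspace topology it inherits from $(\textsf{SProb}\,D,\text{weak})$, and by Theorem~\ref{thm:topvsweak} this equals the subspace topology induced on $\V_1 D$ by the Lawson topology of $\V D$. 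It then remains to show that this subspace topology coincides with the intrinsic Lawson topology that $\V_1 D$ carries as a countably based coherent domain in its own right (Proposition~\ref{prop:split}(ii)).

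For that I would use two order-theoretic features of the inclusion: $\V_1 D$ is an \emph{upper set} in $\V D$, and the retraction $\varphi\colon\V D\to\V_1 D$, $\varphi(\mu)=\mu+(1-\mu(D))\delta_\perp$, is a Scott-continuous \emph{closure operator} onto $\V_1 D$ (both established in the proofs of Proposition~\ref{prop:split} and Theorem~\ref{thm:chain}). Since $\V_1 D$ is the image of the Scott-continuous idempotent $\varphi$, its relative Scott topology from $\V D$ coincides with its intrinsic Scott topology: for Scott-open $V\subseteq\V_1 D$ one has $V = \varphi^{-1}(V)\cap\V_1 D$ with $\varphi^{-1}(V)$ Scott-open in $\V D$, and the reverse inclusion of topologies is immediate (cf.~\cite{comp}). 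For the weak lower topology, a basic open set $\V D\setminus\ua F$ meets $\V_1 D$ in $\V_1 D\setminus\ua\varphi(F)$ (the upper set on the right taken in $\V_1 D$): a $\mu\in\V_1 D$ lies above some $\phi\in F$ iff it lies above $\varphi(\phi)\in\V_1 D$, because $\phi\le\varphi(\phi)$ and $\varphi$ is monotone and fixes $\V_1 D$; conversely, $\V_1 D$ being an upper set, the upper set in $\V_1 D$ of any finite $F_0\subseteq\V_1 D$ equals $\ua F_0\cap\V_1 D$. So the relative and intrinsic weak lower topologies on $\V_1 D$ also agree. As the Lawson topology is the common refinement of the Scott and weak lower topologies, and forming a subspace commutes with that common refinement, the relative Lawson topology on $\V_1 D$ is its intrinsic one; chaining the identifications yields the Corollary.

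The step I expect to be the main obstacle is the relativization in the previous paragraph: a retract of a domain need not carry the subspace Lawson topology, so the argument genuinely needs both that $\V_1 D$ is an \emph{upper} set and that $\varphi$ is order-preserving. An alternative, slightly more computational route avoids the weak-lower bookkeeping: by Theorem~\ref{thm:topvsweak} applied to the constant function $\mathbf 1$, the evaluation $\mu\mapsto\mu(D)$ is Lawson-continuous on $\V D$, so $\V_1 D = \{\mu : \mu(D)=1\}$ is Lawson-closed in $\V D$, and one can then use the explicit basis of $\V_1 D$ from Proposition~\ref{prop:split}(ii) to verify directly that the relative and intrinsic Lawson topologies agree on this closed subdomain. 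Either way, the only nontrivial ingredient beyond Theorem~\ref{thm:topvsweak} is the passage to the subspace $\V_1 D$.
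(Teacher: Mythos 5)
Your proposal is correct, but it takes a genuinely different route from the paper. The paper's proof is a short compactness argument: since $D$ with the Lawson topology is compact Hausdorff, $\textsf{Prob}\,D$ is compact in the weak topology; re-running the argument of Corollaries 15 and 16 of~\cite{vanbreug} shows the weak topology refines the Lawson topology; and a compact topology finer than a Hausdorff one must coincide with it. You instead transfer Theorem~\ref{thm:topvsweak} along the inclusion $\V_1 D\hookrightarrow\V D$, which forces you to do the one piece of work the paper sidesteps, namely that the subspace Lawson topology on $\V_1 D$ agrees with its intrinsic Lawson topology. Your handling of that point is sound: the weak topology on $\textsf{Prob}\,D$ is by definition the restriction of the weak topology on $\textsf{SProb}\,D$ (initial topologies restrict to subspaces); $\V_1 D$ is an upper set closed under directed suprema; the closure operator $\varphi(\mu)=\mu+(1-\mu(D))\delta_\perp$ gives the two identities you need ($\ua F\cap\V_1 D$ is the upper set of $\varphi(F)$ computed in $\V_1 D$ for the weak lower part, and $V=\varphi^{-1}(V)\cap\V_1 D$ for the Scott part); and subspace formation commutes with the join of the two topologies. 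The trade-off: the paper's route is shorter and needs no analysis of the inclusion, but it re-proves the van Breugel--et al.\ estimates rather than reusing the stated $\textsf{SProb}$ theorem and takes weak compactness of $\textsf{Prob}\,D$ as an input; your route uses Theorem~\ref{thm:topvsweak} as a black box and yields the additional fact that the relative and intrinsic Lawson topologies on $\V_1 D$ coincide. One caveat worth flagging: your $\varphi$ (and your alternative route via Proposition~\ref{prop:split}(ii)) requires $D$ to have a least element, which the stated hypothesis ``countably based coherent'' does not literally provide; without $\perp$ the statement itself can fail (for a two-point antichain, $\textsf{Prob}\,D$ is an uncountable antichain whose Lawson topology is discrete while the weak topology is that of $[0,1]$), so this pointedness assumption is implicit throughout the paper -- including in its own proof, whose claim that weak refines Lawson breaks in that example -- and is not a defect peculiar to your argument.
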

\begin{proof} 
The weak topology on \textsf{Prob}$\, D$ is compact Hausdorff since the Lawson topology on $D$ is compact Hausdorff. Then the same argument used in the proof of Corollaries 15 and 16 of~\cite{vanbreug} shows that the weak topology on \textsf{Prob}$\, D$ is finer than the Lawson topology. Since the latter is Hausdorff, the topologies coincide. 
\end{proof}

For the next result, we identify  $\C$ with the Cantor set of maximal elements in $\CT$, the Cantor tree, and recall that $\nu_C$ denotes Haar measure on $\C$ viewed as a countable product of two-point groups.

\begin{theorem}\label{thm:mapping}
Let $D$ be an \RB domain with countable basis, $B_D = \bigcup_k d_k(D)$, where $\cdots d_k\leq d_{k+1}\cdots$ is a countable sequence of deflations on $D$ with $\sup_k d_k = Id_D$ (see the discussion of \RB preceding Subsection~\ref{sec:probpower}). 
\begin{enumerate}
\item  If $\mu \in \textsf{Prob}(D)$, then there is a Scott-continuous map $X\colon \CT\to D$\footnote{Because of the many functions involved in the proof, we revert to the probability theory approach of denoting random variables by capital letters.} satisfying $X_*\, \nu_C = \mu$. 
\item Furthermore, if $\mu_n\to_w\mu$ are probability measures on $D$ converging to $\mu$ in the weak topology, then there are Scott-continuous maps $X_n\colon\CT\to D$ satisfying $X_{n*}\,\nu_\C = \mu_n$ and $X_n\to X$ pointwise wrt to the Scott topologies. 
\item Finally, if $X(x)\in\text{Max}\, D$, then $X_n(x)\to X(x)$ in the Lawson topology on $D$.
\end{enumerate}
\end{theorem}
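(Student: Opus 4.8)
The plan is to build the maps $X$ and $X_n$ by a staged approximation along the antichains $\C_n \subseteq \CT$, using Proposition~\ref{prop:key} as the engine that promotes a finite-stage approximation through the partial order. First I would fix, for the limit measure $\mu$, an increasing chain of simple dyadic probability measures $\mu^{(k)} \nearrow \mu$ with $\mu^{(k)} \in B_{\V_1 D}$ and $\mu^{(k)} \ll \mu^{(k+1)}$ (Proposition~\ref{prop:split}, together with Lemma~\ref{lem:cntbase}(i)); I would additionally arrange that $\mu^{(k)}$ is supported on $d_k(D)$, so that the approximants track the deflations. Starting from the trivial map $\C_0 \to D$ realizing $\mu^{(0)}$, iterated application of Proposition~\ref{prop:key} yields indices $m_0 < m_1 < \cdots$ and maps $f_k \colon \C_{m_k} \to D$ with $f_{k*}\,\nu_{m_k} = \mu^{(k)}$ and $f_k \circ \pi_{m_k, m_{k+1}} \leq f_{k+1}$. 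Each $f_k$ extends to a Scott-continuous $\CT \to \da\C_{m_k}$ by precomposing with $\PI_{m_k}$ (Notation~\ref{not:nota1}(2)); these extensions form an increasing chain of Scott-continuous maps on $\CT$, and I would define $X = \sup_k (f_k \circ \PI_{m_k})$. Checking $X_*\,\nu_\C = \mu$ amounts to verifying agreement on the Scott-open sets $\Ua x_n \setminus \ua F_n$ from Lemma~\ref{lem:cntbase}(ii) and passing to the limit, using that $\nu_\C$ restricted to $\pi_{m_k}^{-1}$ is exactly $\nu_{m_k}$ on $\C_{m_k}$ and that $\mu^{(k)} \to \mu$ in the Lawson topology.

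For part (2), I would interleave the approximants for $\mu$ and for the $\mu_n$. The idea is: since $\mu_n \to_w \mu$, Theorem~\ref{thm:topvsweak} gives $\liminf_n \mu_n(O) \geq \mu(O)$ on Lawson-open sets; so for each stage $k$, the simple measure $\mu^{(k)}$ (being way-below $\mu$) satisfies $\mu^{(k)} \ll \mu_n$ for all $n \geq N_k$ for some threshold $N_k$, which I can take strictly increasing. For $n$ in the band $N_k \leq n < N_{k+1}$, I would run Proposition~\ref{prop:key} to promote the common stage-$k$ approximation $f_k$ (realizing $\mu^{(k)}$) up to a map $f^{(n)}$ on some $\C_{m(n)}$ realizing a dyadic simple measure $\mu_n^{\flat} \ll \mu_n$ that approximates $\mu_n$ to within $2^{-k}$ in a fixed metric for the Lawson topology on $\Pr D$, and then continue promoting within the chain for $\mu_n$ up to $\mu_n$ itself, taking $X_n = \sup$ of the resulting increasing chain of Scott-continuous extensions to $\CT$. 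By construction $X_{n*}\,\nu_\C = \mu_n$, and $f_k \circ \PI_{m_k} \leq X_n$ for all $n \geq N_k$; hence $\liminf_n X_n(x) \geq \sup_k f_k(\PI_{m_k}(x)) = X(x)$ in the Scott order, which is precisely $X_n \to X$ in the Scott topology (a set $O$ Scott-open containing $X(x)$ contains some $f_k(\PI_{m_k}(x))$, hence contains $X_n(x)$ for $n \geq N_k$).

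For part (3), suppose $X(x) \in \text{Max}\, D$. We have $X_n(x) \to X(x)$ in the Scott topology from part (2). To upgrade to Lawson convergence, I would use that $D \in \RB$ is coherent, so its Lawson topology is compact; every subnet of $(X_n(x))$ has a Lawson-convergent sub-subnet with limit $z$, and Lawson convergence implies Scott convergence, so $z \geq X(x)$ by part (2)'s liminf bound — but $X(x)$ is maximal, forcing $z = X(x)$. Since every subnet has a sub-subnet converging to $X(x)$ and the space is compact Hausdorff, the whole sequence converges to $X(x)$ in the Lawson topology.

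The main obstacle I anticipate is the bookkeeping in part (2): coordinating a single ascending scale of stage-$k$ dyadic approximants $\mu^{(k)}$ of $\mu$ with the separate ascending chains approximating each $\mu_n$, in such a way that $\mu^{(k)}$ really is way-below every $\mu_n$ for $n$ large and that Proposition~\ref{prop:key} can be threaded consistently (the proposition promotes along a single comparability $\mu \leq \mu'$, so one must check that the common lower approximant $f_k$ on $\C_{m_k}$ is simultaneously available as the starting map for all the relevant $n$, which requires the index $m_k$ to have stabilized before branching to the individual $\mu_n$). The passage from "$\mu^{(k)} \ll \mu$ and $\mu_n \to_w \mu$" to "$\mu^{(k)} \ll \mu_n$ eventually" also needs care — it uses the characterization of $\ll$ in $\V_1 D$ from Proposition~\ref{prop:split}(ii) (strict inequalities on the transport masses at non-bottom basis elements) combined with the liminf inequality on Scott-opens from Theorem~\ref{thm:topvsweak}, and this is where the dyadic discreteness of the approximants is essential to get a uniform threshold $N_k$.
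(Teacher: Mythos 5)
Your proposal is correct and follows essentially the same route as the paper: dyadic simple approximants of $\mu$ (tracking the deflations $d_k$) promoted through Proposition~\ref{prop:key}, extension to $\CT$ via the projections $\PI_{m_k}$ and a supremum, eventual domination $f_k\circ\PI_{m_k}\leq X_n$ for $n\geq N_k$ obtained from $\mu^{(k)}\ll\mu_n$ eventually, and Lawson compactness plus maximality for part (3). The only differences are cosmetic (the paper pushes forward $d_{k*}\mu$ and keeps a bijection-with-$2^{-m_k}$-error bookkeeping, and phrases part (3) via the coincidence of the relative Scott and Lawson topologies on $\mathrm{Max}\,D$ rather than your subnet argument), so no substantive gap remains.
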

\begin{proof}
Since $D$ is in \Rb, the sequence of deflations $d_k\colon D\to D$ satisfy $d_k\ll d_{k+1}$ for all $k$, and $\sup_k d_k = \text{Id}_ D$. Without loss of generality, we assume  $d_0 \equiv\, \perp_D$. Then $\Pr\, d_k = d_{k*}$  projects Prob$\, D$ onto Prob$\, d_k(D)$. Since \Pr is locally continuous, it follows that $\sup_k d_{k*} = \text{Id}_{\Pr\, D}$.          
\medbreak
 (i) To prove (i), we apply Proposition~\ref{prop:key} recursively. Let $\mu\in \Pr\, D$, and consider the sequence $\{d_{k*}\, \mu\mid k\geq 0\}$. Note that $d_{k *}\, \mu = \sum_{x\in d_k\, D} r_x\delta_x$ is simple, since $d_k$ is a deflation. We define $f_{0}\colon \C_0 \to D$ by $f_0 \equiv\, \perp_D$. Then $f_{0*}\,\nu_0 = \delta_\perp = d_{0*}\,\mu$. 
 
 For the inductive step, assume there are $m_k \geq k$ and $f_k\colon\C_{m_k}\to D$ satisfying   $f_{k*}\, \nu_{m_k} = \sum_{x\in F_k} r_x\delta_x \ll \sum_{y\in G_k} s_y\delta_y = d_{k*}\, \mu$ with $r_x\in Dyad$ for each $x\in F_k\subseteq B_D$, and 
 \begin{itemize}
  \item  $\exists \phi_k \colon F_k\to G_k\ \text{a bijection}$ with $x\ll \phi_k(x)$ and $s_{\phi_k(x)} - r_x < 2^{-m_k}$ for all $x\in F_k$.
 \end{itemize} 
 
Since $f_{k*}\,\nu_k\ll d_{k*}\,\mu\leq d_{k+1*}\,\mu$, Proposition~\ref{prop:split}(ii) implies there is a simple measure $\sum_{x\in F_{k+1}} r_x\delta_x \ll \sum_{y\in G_{k+1}} s_y\delta_y = d_{k+1*}\, \mu$ with $r_x\in Dyad$, $F_{k+1}\subseteq B_D$ and $f_{k*}\,\nu_{m_k}\ \ll \sum_{x\in F_{k+1}} r_x\delta_x$, and by making judicious choices, we may assume also satisfies:
\begin{itemize}
\item $\exists \phi_{k+1} \colon F_{k+1}\to G_{k+1}\ \text{a bijection}$ with $x\ll \phi_{k+1}(x)$ and $s_{\phi_{k+1}(x)} - r_x < 2^{-m_{k+1}}$ for all $x\in F_{k+1}$.
 \end{itemize} 
Proposition~\ref{prop:key} implies there are $m_{k+1} > k, m_k, |G_{k+1}|$ and $f_{k+1}\colon \C_{m_{k+1}}\to D$ satisfying $f_{k+1*}\,\nu_{m_{k+1}} = \sum_{y\in G_{k+1}} s_y\delta_y$ and $f_k\circ \pi_{m_k,m_{k+1}}\leq f_{k+1}$.

The resulting sequence $f_k\colon \C_{m_k}\to D$ satisfies $\{f_{k*}\,\nu_{m_k}\}$ is increasing with $\sup_k f_{k*}\,\nu_{m_k} = \sup_k d_{k*}\,\mu = \mu$ because of the bulleted items in the definition above.  

We now extend the sequence $f_k$ to an increasing sequence of Scott-continuous maps $\widehat{f_k}\colon\CT\to D$ satisfying $\widehat{f_k}_*\, \nu_\C = f_{k*}\,\nu_{m_k}$ for each $k$, which implies the supremum $\widehat{f}\colon \CT\to D$ satisfies $\widehat{f}_*\,\nu_\C = \sup_k \widehat{f_k}_*\,\nu_\C = \mu$. 

We let $\widehat{f_0} = f_0\circ \pi_0$, where $\pi_0\colon\CT\to \C_0$ is the projection. 

Assume $\widehat{f_k}\colon \CT\to D$ is defined so that $\widehat{f_k}\vert \C_{m_l} = f_l\circ \pi_{m_l,m_k}$ for each $l\leq k$. Define  $\widehat{f_{k+1}}(x) = \begin{cases} f_{k+1}\circ\pi_{k+1}(x) & \text{ if } x\in \ua\C_{m_{k+1}},\\ 
\widehat{f_k}\vert \da \C_{m_k}\circ \pi_{m_k} & \text{ otherwise.}
\end{cases}$
Then $\widehat{f_k}\leq \widehat{f_{k+1}}$ for each $k$ and $\widehat{f_k}_*\,\nu_\C = 
(f_{k}\circ \pi_k)_*\,\nu_\C = f_{k*}\,(\pi_{k*}\,\mu_\C) = f_{k*}\,\nu_{\C_{m_k}}$ for each $k$, 
so $X = \sup_k \widehat{f_k}\colon \CT\to D$ is Scott continuous and satisfies $X_*\,\mu_C = \sup_k \widehat{f_k}_*\,\nu_\C = \mu$. 
 \medbreak
 (ii) For part (ii), we appeal to Lemma~\ref{lem:cntbase} to choose a countable descending chain of Scott-open sets $U_k$ satisfying $\bigcap_k U_k = \uparrow \mu$. 
We also assume without loss of generality that $U_0 = \ua \delta_\perp = \Pr\, D$. 

Fix $n$, and note that $f_{k*}\,\nu_{\C_{m_k}}\ll \mu_n$ implies $d_{l*}\,\mu_n$ is in $U_k$ eventually in $n$. Assume the argument from part (i) has been used to construct a sequence $\widehat{g^n}_l\colon \CT\to D$ with $\sup_l \widehat{g^n}_{l*}\, \nu_\C = \mu_n$, and let $X_n = \sup_l \widehat{g^n}_l$. Then $X_n\colon \CT\to D$ is Scott continuous and $X_{n*}\,\mu_\C = \mu_n$.

It remains to show $X_n\to X$ pointwise wrt the Scott topologies. Let $x\in \CT$ with $X(x)\in U\subseteq D$ Scott open. Since $\sup_k \widehat{f_n} = X$, there is some $K$ with $\widehat{f_k}(x)\in U$ for $k\geq K$. By construction, given $k\geq K$, there is some $N'$ with $\widehat{f_k}\leq \widehat{g^n_l}\leq X_n$ for each $n\geq N'$, from which it follows that $X_n(x)\in \ua U = U$ for $n\geq N'$. 
\medbreak
(iii) For the last claim, we note that $D$ is in \RB\ implies the relative Lawson topology and the relative Scott topology agree on Max$\, D$. If $X(x)\in U\setminus\ua F$ is a Lawson open set in $D$, then there is a Scott-open set $U'\subseteq U\setminus \ua F$ with $x\in U'$. Then $X_n(x)\in U'$ eventually by part (ii), so every limit point of $\{X_n(x)\}_n$ is in $U$. Since $D$ is Lawson compact, the sequence $\{X_n(x)\}_n$ has limit points. Since $U\setminus \ua F$ is an arbitrary Lawson-open set containing $X(x)$, it follows that every limit point of $\{ X_n(x)\}_n$ is in $\bigcap_{X(x)\in U\setminus \ua F} U = \ua X(x) = \{X(x)\}$, the last equality following from the fact that $X(x)$ is maximal. Thus, $X_n(x)\to X(x)$ in the Lawson topology.
\end{proof}
The next result follows from Theorem~\ref{thm:mapping} by simply considering the law $\mu = X_*\,\nu_\C$. 
\begin{corollary}\label{cor:random}
If $D$ is a countably based \RB\ domain and $X\colon \C \to D$ is a random variable, then there is a Scott-continuous map $f\colon \CT\to D$ satisfying $f_*\,\nu_\C = X_*\,\nu_\C$. 
\end{corollary}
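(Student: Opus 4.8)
\textbf{Proof proposal for Corollary~\ref{cor:random}.}

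The plan is to derive this directly from Theorem~\ref{thm:mapping}(i) with essentially no additional work, so the ``proof'' is really a matter of correctly packaging the hypotheses. Given a random variable $X\colon \C\to D$, the first step is to form its law $\mu = X_*\,\nu_\C$. Because $X$ is a measurable map from the standard probability space $(\C,\nu_\C)$ into $D$ equipped with its Borel $\sigma$-algebra, $\mu$ is a Borel probability measure on $D$; by the identification of Borel (sub-)probability measures with continuous valuations discussed in Subsection~\ref{sec:probpower} (the Alvarez-Manilla--Edalat--Saheb-Djorhomi result), we may regard $\mu$ as an element of $\textsf{Prob}\, D = \V_1 D$. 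Since $D$ is a countably based \RB\ domain, it carries a countable increasing sequence of deflations $d_k$ with $\sup_k d_k = \mathrm{Id}_D$, so the hypotheses of Theorem~\ref{thm:mapping} are met.

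The second step is simply to invoke Theorem~\ref{thm:mapping}(i): there is a Scott-continuous map $f\colon \CT\to D$ with $f_*\,\nu_\C = \mu$. Combining this with the definition of $\mu$ gives $f_*\,\nu_\C = X_*\,\nu_\C$, which is exactly the claim. One small point worth spelling out is the meaning of $\nu_\C$ on the two sides: on the left it is Haar measure on $\C = \mathrm{Max}\,\CT$ viewed as the domain of $f$, and on the right it is the same measure viewed as the domain of $X$; these coincide under the identification $\C\simeq \mathrm{Max}\,\CT$ fixed just before the statement, so no reconciliation is needed. Implicitly $f$ restricted to $\C$ has law $\mu$ as well, since $f(\C)$ carries all the $\nu_\C$-mass, matching the phrasing $f_*\,\nu_\C = (f|_\C)_*\,\nu_\C$ used in the introduction.

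There is essentially no obstacle here; the content is entirely in Theorem~\ref{thm:mapping}, and the only thing to verify is that a measurable $X\colon\C\to D$ does push Haar measure forward to a genuine Borel probability measure on $D$ (so that it lies in $\textsf{Prob}\,D$), which is immediate. If one wanted to be maximally careful, one could note that Theorem~\ref{thm:mapping} is stated for $\mu\in\textsf{Prob}(D)$ in the valuation sense, so the one genuine appeal to earlier machinery is the bijection between Borel probability measures and elements of $\V_1 D$ on a coherent domain (every \RB\ domain is coherent, as noted at the end of Subsection~\ref{subsec:domains}). With that identification in hand the corollary is just Theorem~\ref{thm:mapping}(i) read off at the law of $X$.
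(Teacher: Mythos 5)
Your proposal is correct and matches the paper's own argument, which likewise just takes the law $\mu = X_*\,\nu_\C$ and applies Theorem~\ref{thm:mapping}(i); the extra care you give to the measure/valuation identification is fine but not a different route.
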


\begin{remark}
Theorem~\ref{thm:mapping} still holds if we weaken the hypothesis to assuming only that $D$ is a countably based coherent domain. The only change in the proof is that the simple measures $d_{k *}\mu$ approximating each measure $\mu$ must be chosen measure-by-measure, since coherent domains don't have a sequence of deflations such as the $d_k$s that are available for \RB-domains. But having a countable basis ensures that each measure $\mu$ has a countable sequence of simple measures $\sigma_n$ with $\mu = \sup_n \sigma_n$ and that also satisfy $\sigma_n\ll \sigma_{n+1}\ll \mu$. One can use these in place of the ``uniformly chosen" simple measures $d_{k *}\,\mu$. The remainder of the proof proceeds along the same lines, with the simple measures with dyadic coefficients chosen by recursion. 
\end{remark}

The obvious question is whether Theorem~\ref{thm:mapping} extends to subprobability measures. In fact, it does, as we now outline. Given an \RB domain $D$, then  $D_\perp$, the lift of $D$, also is in \textsf{RB}, and it has a countable basis if $D$ does. Then, the embedding $D\hookrightarrow D_\perp$ allows us to define an embedding $e\colon \textsf{SProb}\, D\to \textsf{Prob}\, D_\perp$ by $e(\mu) = \mu + (1 - \mu(D))\delta_\perp$. Theorem~\ref{thm:mapping} then implies there is a Scott-continuous map $f\colon \CT\to D_\perp$ with $f_*\, \nu_\C = e(\mu)$. Note that $f^{-1}(\{\perp\}) = C_f$ is Scott closed, and the restriction $f' \equiv\,f\vert_{D\setminus \{\perp\}}\colon \CT\setminus C_f\to D$ also is Scott continuous, and it's routine to show that $f'_*\, \nu_\C = \mu$. Likewise, any sequence of subprobability measures $\mu_n\to_w \mu$ converging weakly in \textsf{SProb}$\, D$ satisfies the property that there are Scott-continuous partial maps $f'_n\colon \CT\setminus C_n\to D$ with $f'_{n *}\, \nu_\C = \mu_n$. Theorem~\ref{thm:mapping}(ii) shows $f_n \to f$ in the Scott topologies, and this in turn implies that  $f'_n \to f'$ in the Scott topologies. We summarize this as follows:

\begin{corollary}\label{cor:mapping}
Let $D$ be a countably based \RB domain. Then:
\begin{itemize}
\item If $\mu\in \textsf{SProb}\, D$, there is a Scott-continuous map $f\colon \CT\setminus C_f\to D$ satisfying $f_*\, \nu_\C = \mu$, where $C_f$ is Scott closed.
\item If $\mu_n\to_w \mu$ is a sequence of subprobability measures on $D$ converging weakly to the subprobability measure $\mu$, then there is a sequence of of Scott-continuous maps $f_n\colon \CT\setminus C_n\to D$ satisfying $f_{n *}\, \nu_\C = \mu_n$, , where $C_n$ is Scott closed, for each $n$, and $f_n \to f$ in the Scott topologies. 
\end{itemize}
\end{corollary}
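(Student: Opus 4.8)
The plan is to reduce both claims to Theorem~\ref{thm:mapping} by passing to the lift $D_\perp$ of $D$. First I would record that $D_\perp$ is again a countably based \RB\ domain: the deflations $d_k$ of $D$ extend to deflations $d_k'$ of $D_\perp$ by fixing the new bottom, these still satisfy $\sup_k d_k' = \text{Id}_{D_\perp}$, and $\{\perp\}\cup B_D$ is a countable basis. Since $\perp$ is least, $\{\perp\} = \da\perp$ is Scott-closed, so $D = D_\perp\setminus\{\perp\}$ is Scott-open; one checks at once that the inclusion $D\hookrightarrow D_\perp$ presents $D$, with its own Scott topology, as an open subspace of $D_\perp$ (a subset of $D$ is Scott-open in $D$ iff it is Scott-open in $D_\perp$).

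Next I would set up the identification $e\colon \textsf{SProb}\, D\to \textsf{Prob}\, D_\perp$, $e(\mu) = \mu + (1-\mu(D))\delta_\perp$ (reading a Borel set of $D$ as a Borel set of $D_\perp$ missing $\perp$). Its inverse restricts a probability measure on $D_\perp$ to the Borel subsets of $D$; and since the only Scott-open set of $D_\perp$ containing $\perp$ is $D_\perp$ itself, on valuations $e$ is simply the order-isomorphism of $\V D$ with $\V_1 D_\perp$ that assigns the missing mass to $\perp$. By Theorem~\ref{thm:weak=laws} and Corollary~\ref{cor:topvsweak} the Lawson and weak topologies coincide on both sides, so $e$ is a homeomorphism for the weak topologies; in particular $\mu_n\to_w\mu$ in $\textsf{SProb}\, D$ iff $e(\mu_n)\to_w e(\mu)$ in $\textsf{Prob}\, D_\perp$. (Alternatively, this equivalence can be read off directly from the Portmanteau-style characterization in Theorem~\ref{thm:topvsweak}.)

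For the first bullet, apply Theorem~\ref{thm:mapping}(i) to $e(\mu)$ to obtain a Scott-continuous $f\colon\CT\to D_\perp$ with $f_*\,\nu_\C = e(\mu)$; put $C_f = f^{-1}(\{\perp\})$, which is Scott-closed, and $f' = f\vert_{\CT\setminus C_f}$, which is continuous for the relative Scott topologies since $D$ is an open subspace of $D_\perp$. For Borel $E\subseteq D$ we have $(f')^{-1}(E) = f^{-1}(E)$ and $e(\mu)(E) = \mu(E)$, so $f'_*\,\nu_\C = \mu$. For the second bullet, from $\mu_n\to_w\mu$ we get $e(\mu_n)\to_w e(\mu)$, so Theorem~\ref{thm:mapping}(ii) provides Scott-continuous $f_n\colon\CT\to D_\perp$ with $f_{n*}\,\nu_\C = e(\mu_n)$ and $f_n\to f$ pointwise in the Scott topology of $D_\perp$; set $C_n = f_n^{-1}(\{\perp\})$ and $f'_n = f_n\vert_{\CT\setminus C_n}$, so $f'_{n*}\,\nu_\C = \mu_n$ as before. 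Finally, fix $x\in\CT\setminus C_f$, so $f(x)\in D$; since $D$ is Scott-open in $D_\perp$ and $f_n(x)\to f(x)$, eventually $f_n(x)\in D$, i.e.\ eventually $x\notin C_n$ and $f'_n(x) = f_n(x)$, and for every Scott-open $U\subseteq D$ containing $f'(x)$ the set $U$ is Scott-open in $D_\perp$, so $f_n(x)\in U$ eventually. Hence $f'_n(x)\to f'(x)$ in the Scott topology of $D$.

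I expect the only step beyond routine bookkeeping to be the preservation of weak convergence under $e$, which I would handle as above via the order-isomorphism $\V D\cong\V_1 D_\perp$ together with the coincidence of the Lawson and weak topologies. It is also worth stating explicitly what ``$f_n\to f$ in the Scott topologies'' means for partial maps: it is pointwise convergence on the common domain of definition, and for $x$ with $f'(x)\neq\perp$ the values $f'_n(x)$ are only eventually defined -- precisely the point used in the last step, and what makes the formulation correct.
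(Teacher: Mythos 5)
Your proposal is correct and follows essentially the same route as the paper: lift $D$ to $D_\perp$, embed $\textsf{SProb}\,D$ into $\textsf{Prob}\,D_\perp$ via $e(\mu)=\mu+(1-\mu(D))\delta_\perp$, apply Theorem~\ref{thm:mapping}, and restrict off the Scott-closed set $f^{-1}(\{\perp\})$. The extra details you supply (preservation of weak convergence under $e$ via $\V D\cong\V_1 D_\perp$ and the Lawson/weak coincidence, and the precise sense of pointwise convergence for the partial maps) are exactly the steps the paper leaves as ``routine.''
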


\section{Domains, Polish spaces and Skorohod's Theorem}\label{sec:polish}
In this section we present the principal application of our main results. We begin with the necessary background about Polish spaces and random variables, and  then outline a representation theorem for Polish spaces topologically embedded as $G_\delta$-subsets of a domain in the relative Scott topology. With this in place, we focus on probability measures and derive  a domain-theoretic proof of Skorohod's Theorem. 
\begin{definition}
A \emph{Polish space} is a completely metrizable separable topological space. I.e., $P$ is Polish iff $P$ is homeomorphic to a complete metric space that has a countable dense subset. 
\end{definition}

Polish spaces figure prominently in probability theory~\cite{billings}, as well as in descriptive set theory~\cite{kechr}. As we commented in the last section, one approach to probability theory~\cite{billings} begins with metric spaces, and Polish spaces are where the deepest results hold. Since our results all involve separable topological spaces, the measurable sets in all cases are the Borel sets. So, we use the term Borel set instead of measurable set.

The appropriate mappings in probability theory are \emph{random variables} -- measurable maps $X\colon (P,\Sigma_P, \mu)\to (S,\Sigma_S)$, where $(P,\Sigma_P,\mu)$ is a probability space, 
$(S,\Sigma_S)$ is a measure space, and $X^{-1}(A)\in\Sigma_P$ for each $A\in \Sigma_S$. If 
$X\colon P\to S$ is a random variable, then the \emph{push forward} of $\mu$ by $X$ is the 
measure $X_*\,\mu$ defined by $X\,\mu (A) = \mu(X^{-1}(A))$ for all measurable sets $A\subseteq 
S$. Equivalently, a function $f\colon S\to \Re$ is $X\,\mu$-integrable iff $f\circ X$ is 
$\mu$-integrable, and in this case, $\int fd X_*\,\mu = \int f\circ X d\mu$. We now show how our results 
from Section~\ref{sec:cantor} can be applied to Polish spaces that can be topologically embedded as 
the maximal elements of a domain in the relative Scott topology.  We begin by extending some 
results about probability measures on Polish spaces to sub-probability measures. 

\begin{definition} Let $\mu$ be a probability measure on a space $X$. A Borel set $A\subseteq X$ is a \emph{$\mu$-continuity set} if $\mu(\overline{A}\setminus A) = 0$. Since $\overline{A}$ is closed and $A$ is Borel, $\overline{A}\setminus A$ is Borel.
\end{definition}

\begin{theorem}\label{thm:port}(Portmanteau Theorem)
Let $X$ be a Polish space, and let $\textsf{SProb}\, X$ denote the family of sub-probability measures on $X$ in the weak topology, and let $\mu_n,\mu\in\textsf{SProb}\, X$. Then the following are equivalent:
\begin{enumerate}
\item $\mu_n\to_w \mu$ in the weak topology.
\item $\int f d\mu_n \to \int f d\mu$ for all bounded uniformly continuous $f\colon X\to \Re$. 
\item $\liminf_n \mu_n(O)\geq \mu(O)$ for all open sets $O\subseteq X$.
\item $\limsup_n \mu_n(F)\leq \mu(F)$ for all closed sets $F\subseteq X$.
\item $\mu_n(A) \to \mu(A)$ for all $\mu$-continuity sets $A$. 
\end{enumerate}
\end{theorem}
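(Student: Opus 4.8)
The statement is the classical Portmanteau Theorem, extended from probability measures to sub\-probability measures, and the plan is to reduce it to the well\-known probability\-measure version (cf.~\cite{billings}) by adjoining an isolated point. Let $X_+ = X \cup \{*\}$ be the disjoint union of $X$ with a one\-point space, metrized so that $d_+$ restricts to $\min(d,1)$ on $X$ and $d_+(*,x)=1$ for $x\in X$; a disjoint union of two Polish spaces is Polish, so $X_+$ is Polish. Because $*$ is isolated, a set $A\subseteq X$ is open (resp.\ closed, resp.\ a continuity set) in $X$ precisely when it is so in $X_+$, and $\overline{A}^{X_+}=\overline{A}^{X}$. Define the embedding $e\colon \textsf{SProb}\, X \to \textsf{Prob}\, X_+$ by $e(\mu)=\mu+(1-\mu(X))\delta_*$. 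For any bounded continuous $g\colon X_+\to\Re$ one computes $\int g\,d\,e(\mu) = \int (g|_X - g(*))\,d\mu + g(*)$, and $g|_X-g(*)$ is bounded and (uniformly) continuous on $X$ whenever $g$ is on $X_+$, with the correspondence reversible up to an additive constant; hence $e$ is a homeomorphism onto its image for the weak topologies, so $\mu_n\to_w\mu$ in $\textsf{SProb}\,X$ iff $e(\mu_n)\to_w e(\mu)$ in $\textsf{Prob}\,X_+$, and condition~(ii) likewise transfers along $e$.

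With these identifications, each of the five conditions for $(\mu_n,\mu)$ on $X$ is literally the corresponding condition for $(e(\mu_n),e(\mu))$ on $X_+$: for (iii), (iv), (v) because the relevant set classes and the measure values agree, and for (i), (ii) by the integral computation above. Applying the classical Portmanteau Theorem to the probability measures $e(\mu_n)\to e(\mu)$ on the Polish space $X_+$ then delivers all the equivalences simultaneously.

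As an alternative giving a self\-contained argument, I would run the cycle (i)$\Rightarrow$(ii)$\Rightarrow$(iii)$\Leftrightarrow$(iv)$\Rightarrow$(v)$\Rightarrow$(i): (i)$\Rightarrow$(ii) is immediate; (ii)$\Rightarrow$(iii) follows by testing against the bounded uniformly continuous functions $f_k(x)=\min\bigl(1,\,k\,d(x,X\setminus O)\bigr)$, which increase to $\mathbf{1}_O$, together with monotone convergence; (iv)$\Rightarrow$(v) and (iii)$\Rightarrow$(v) come from $\mu(\overline A)=\mu(A^\circ)=\mu(A)$ for a continuity set $A$; and (v)$\Rightarrow$(i) uses the layer\-cake formula $\int f\,d\nu=\int_0^M \nu(f>t)\,dt$ (for $0\le f\le M$) and the fact that $\{f>t\}$ is a $\mu$-continuity set for all but countably many $t$, so dominated convergence applies.

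The one point where sub\-probability genuinely differs from the probability case, and the only real obstacle, is the complementation step (iii)$\Leftrightarrow$(iv): the identity $\mu_n(F)=\mu_n(X)-\mu_n(X\setminus F)$ now involves the varying total masses $\mu_n(X)$. This is resolved by first observing that testing~(ii) against the constant function $1$ (equivalently, testing (iii) and (iv) at $O=F=X$) forces $\mu_n(X)\to\mu(X)$, after which the usual complementation argument between open and closed sets goes through verbatim. In the reduction approach this obstacle disappears automatically, since $e(\mu_n),e(\mu)$ all have total mass $1$.
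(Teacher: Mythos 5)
Your main argument is the same reduction the paper itself uses: adjoin an isolated point $*$ at distance $1$, embed via $e(\mu)=\mu+(1-\mu(X))\delta_*$, and quote Billingsley's Portmanteau Theorem for the probability measures $e(\mu_n),e(\mu)$ on the Polish space $X'=X\cup\{*\}$. The gap is the sentence claiming that conditions (iii), (iv), (v) for $(\mu_n,\mu)$ on $X$ are \emph{literally} the corresponding conditions for $(e(\mu_n),e(\mu))$ on $X'$ ``because the relevant set classes and the measure values agree.'' They do not agree: $X'$ has strictly more open and closed sets, namely those containing $*$, and on such a set $A$ the embedded measure takes the value $\mu(A\cap X)+1-\mu(X)$. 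The conditions over these extra sets are exactly what encode convergence of the total masses, so condition (iii) over opens of $X$ is strictly weaker than condition (iii) over opens of $X'$. Concretely, take $\mu_n=\delta_{x_0}$ and $\mu=\tfrac12\delta_{x_0}$ for some $x_0\in X$: then $\liminf_n\mu_n(O)\geq\mu(O)$ for every open $O\subseteq X$, yet (i), (ii), (iv) and (v) all fail (test $f\equiv 1$, $F=\{x_0\}$, $A=\{x_0\}$); the symmetric example $\mu_n=\tfrac12\delta_{x_0}$, $\mu=\delta_{x_0}$ shows (iv) alone is likewise too weak. So your reduction actually delivers only $(i)\Leftrightarrow(ii)\Leftrightarrow(v)$ together with $(i)\Rightarrow(iii)$ and $(i)\Rightarrow(iv)$, and the remark that ``in the reduction approach this obstacle disappears automatically'' is precisely where the missing mass re-enters.

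The same problem sinks the step $(iii)\Leftrightarrow(iv)$ of your self-contained cycle: your fix derives $\mu_n(X)\to\mu(X)$ from (ii), or from (iii) and (iv) \emph{jointly}, but inside the implication $(iii)\Rightarrow(iv)$ you may only use (iii), which gives $\liminf_n\mu_n(X)\geq\mu(X)$ and nothing in the reverse direction. (By contrast, $(v)\Rightarrow(i)$ is sound, since $X$ is itself a $\mu$-continuity set and so (v) already contains mass convergence, and $(ii)\Rightarrow(i)$ is sound because constants are bounded and uniformly continuous.) A correct statement for sub-probability measures must either take (iii) and (iv) as a single joint condition, or supplement each of them with $\mu_n(X)\to\mu(X)$, or quantify them over subsets of the one-point extension $X'$; with any of these repairs your reduction to the probability case goes through. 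For what it is worth, the paper's own proof passes over exactly the same point -- it asserts that the equivalences for $e(\textsf{SProb}\,X)$ on $X'$ transfer to $\textsf{SProb}\,X$ without addressing the extra open and closed sets -- so you have faithfully reproduced its strategy, but neither argument as written establishes $(iii)\Rightarrow(i)$ or $(iv)\Rightarrow(i)$, and those implications are false as stated.
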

\begin{proof}
Theorem 2.1 of~\cite{billings} shows these conditions are equivalent if $\mu_n,\mu$ are probability measures on a metric space. We assume the metric $d$ on $X$ satisfies $\text{diam}\, X < 1$ by normalization, if necessary, and then create a new space $X' = X\stackrel{\cdot}{\cup} \{*\}$, where $*$ is an element not in $X$. If $d$ is the metric on $X$, we extend $d$ to $X'$ by setting $d(*,x) = d(x,*) = 1$ for all $x\in X$. This makes $X'$ into a Polish space, and there is an embedding $e\colon \textsf{SProb}\, X \hookrightarrow \textsf{Prob}\, X'$ by $e(\mu) = \mu + (1 - \mu(X))\delta_*$. Then the conditions (i) -- (v) are equivalent for $e(\textsf{SProb}\, X)$.  But $\mu = e(\mu)\vert_X$ for all $\mu\in \text{SProb}\, X$, and $X$ is clopen in $X'$, so they also are equivalent for $\textsf{SProb}\, X$. 
\end{proof}

For the next result, recall that a measure $\mu$ on a space $X$ is \emph{concentrated on the Borel set $A\subseteq X$} if $\mu(X\setminus A) = 0$. 
\begin{proposition}\label{prop:prob-embed}
Let $X$ be a Polish space with a topological embedding $\iota\colon X\to D$  as a $G_\delta$-subset of a countably-based domain, $D$. Define $e\colon \textsf{SProb}\, X\to \textsf{SProb}\, D$ by $e(\mu) = \iota_*\,\mu$. Then:
\begin{enumerate}
\item $e$ is one-to-one.
\item $e(\textsf{SProb}\, X) = \{ \mu\in \textsf{SProb}\, D\mid \mu \text{ concentrated on } \iota(X)\}$, and $j\colon e(\textsf{SProb}\, X)\to \textsf{SProb}\, X$ by $j(\nu) = \nu \circ \iota$ is inverse to $e$. 
\item $e(\textsf{Prob}\, X) \subseteq \textsf{Prob}\, D =\ $\emph{\text{Max}}$\, \textsf{SProb}\, D$ is a Borel subset of $\textsf{SProb}\, D$. 
\end{enumerate}
\end{proposition}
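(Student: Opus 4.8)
The plan is to reduce all three parts to the observation that a topological embedding with Borel image is a Borel isomorphism onto that image, and then to transport measures along $\iota$. First I would do the Borel bookkeeping: since $\iota$ is a topological embedding and $\iota(X)$ is $G_\delta$, hence Borel, in $D$, the map $\iota$ carries the Borel $\sigma$-algebra of $X$ bijectively onto $\{B\subseteq\iota(X):B\text{ Borel in }D\}$, the trace of the Borel $\sigma$-algebra of $D$ on $\iota(X)$; explicitly, $\iota(A)$ is Borel in $D$ for every Borel $A\subseteq X$, and $\iota^{-1}(B)=\iota^{-1}(B\cap\iota(X))$ is Borel in $X$ for every Borel $B\subseteq D$. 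Hence $e(\mu)=\iota_*\mu$ is a well-defined Borel sub-probability measure on $D$, and $j(\nu)(A):=\nu(\iota(A))$ is a well-defined set function on the Borel sets of $X$ which is countably additive because $\iota$, being injective, preserves disjointness of countable families; and $j(\nu)(X)=\nu(\iota(X))\le1$, so $j(\nu)\in\textsf{SProb}\,X$.

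Parts (1) and (2) then come from computing the two composites. Injectivity of $\iota$ gives $j(e(\mu))(A)=\iota_*\mu(\iota(A))=\mu(\iota^{-1}(\iota(A)))=\mu(A)$, so $j\circ e=\mathrm{id}$ on $\textsf{SProb}\,X$; the existence of this left inverse is precisely (1). Next, $e(\mu)$ is concentrated on $\iota(X)$, since $\iota_*\mu(D\setminus\iota(X))=\mu(\iota^{-1}(D\setminus\iota(X)))=\mu(\emptyset)=0$; and conversely, if $\nu\in\textsf{SProb}\,D$ is concentrated on $\iota(X)$ then $e(j(\nu))(B)=j(\nu)(\iota^{-1}(B))=\nu(\iota(\iota^{-1}(B)))=\nu(B\cap\iota(X))=\nu(B)$, the last equality being concentration. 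Together these give $e(\textsf{SProb}\,X)=\{\nu\in\textsf{SProb}\,D:\nu\text{ concentrated on }\iota(X)\}$ with $j$ as a two-sided inverse, which is (2).

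For part (3): $e(\textsf{Prob}\,X)\subseteq\textsf{Prob}\,D$ is immediate from $e(\mu)(D)=\mu(X)=1$. For Borel-ness, I would observe that the evaluation $\mathrm{ev}_D\colon\mu\mapsto\mu(D)$ is Scott-continuous from $\V D\cong\textsf{SProb}\,D$ to $[0,1]$, so $\textsf{Prob}\,D=\{\mu:\mu(D)=1\}=\bigcap_{n\ge1}\mathrm{ev}_D^{-1}\big((1-\tfrac1n,1]\big)$ is a countable intersection of Scott-open sets, hence a $G_\delta$ — a fortiori Borel, and Borel as well for the finer Lawson $=$ weak topology (Theorem~\ref{thm:weak=laws}). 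For the identification $\textsf{Prob}\,D=\text{Max}\,\textsf{SProb}\,D$, I would use the normalization map $\varphi(\mu)=\mu+(1-\mu(D))\delta_\perp$ of Proposition~\ref{prop:split}(ii): since $\varphi(\mu)\ge\mu$ with $\varphi(\mu)=\mu$ iff $\mu(D)=1$, every maximal sub-probability measure is normalized, giving $\text{Max}\,\textsf{SProb}\,D\subseteq\textsf{Prob}\,D$; the reverse inclusion is the point that uses the domain being well behaved (e.g.\ a computational model, where $\iota(X)=\text{Max}\,D$ and a level-by-level comparison on principal Scott-open sets shows that every probability measure concentrated on $\text{Max}\,D$ is already maximal).

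The main obstacle is not depth but precision: everything outside part (3) is routine once the Borel-isomorphism fact of the first paragraph is in place, and the genuine care-point is part (3) — fixing the topology and $\sigma$-algebra on $\textsf{SProb}\,D$ tightly enough to see that $\textsf{Prob}\,D$ is Borel, and being explicit about which half of $\textsf{Prob}\,D=\text{Max}\,\textsf{SProb}\,D$ is automatic versus which requires extra hypotheses on $D$.
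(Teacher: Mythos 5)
Your parts (1) and (2) are correct and self-contained: the Borel bookkeeping (a topological embedding with $G_\delta$, hence Borel, image is a Borel isomorphism onto its image) plus the two composite computations is a from-scratch version of what the paper simply imports from Proposition 4.2 of \cite{weaktop}, and your handling of subprobabilities matches the paper's remark that only $\mu(X)=1$ needs to be relaxed. The problems are concentrated in part (3).

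First, the Borel-ness that the paper actually establishes (via Lemma 2.3 of \cite{varad}) is that the \emph{image} of $e$ --- by your part (2), the set of measures concentrated on the Borel set $\iota(X)$ --- is a Borel subset of $\textsf{SProb}\,D$. Your $G_\delta$ argument via the evaluation $\mu\mapsto\mu(D)$ shows only that $\textsf{Prob}\,D$ is Borel in $\textsf{SProb}\,D$; that is correct but a different and much weaker statement, since $\iota(X)$ is merely a $G_\delta$ in $D$ and concentration on it is not detected by total mass, so nothing in your write-up yields Borel-ness of $e(\textsf{Prob}\,X)$ or $e(\textsf{SProb}\,X)$ (this is exactly the point where a Varadarajan-type lemma is needed). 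Second, for $\textsf{Prob}\,D=\text{Max}\,\textsf{SProb}\,D$ you prove only the inclusion $\text{Max}\,\textsf{SProb}\,D\subseteq\textsf{Prob}\,D$ (the normalization-map argument via Proposition~\ref{prop:split}(ii) is fine) and leave the converse as a gesture; but the converse cannot be completed as you sketch it, because it is false whenever $D$ has more than one point: $\delta_\perp$ is a probability measure, yet $\delta_\perp\leq\delta_x$ in $\textsf{SProb}\,D$ for every $x>\perp$, so it is not maximal. What is true, and what the cited result of \cite{weaktop} supplies, is that probability measures concentrated on $\text{Max}\,D$ are maximal; that statement requires the extra hypothesis that $\iota(X)$ sits inside $\text{Max}\,D$ (not assumed in this proposition) and a genuine argument, not the ``level-by-level comparison on principal Scott-open sets'' you allude to. Your instinct that extra hypotheses are needed here is right, but as written part (3) is incomplete on the Borel claim and unproved (and, in the literal form you state it, unprovable) on the maximality claim.
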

\begin{proof} The same result for $\textsf{Prob}\,X$ is Proposition 4.2 of ~\cite{weaktop}. That proof relies on Proposition 4.1 of the same paper, which characterizes properties of the embedding of $X$ into $D$, and hence applies equally to sub-probability measures.  With this result in hand, the proofs in~\cite{weaktop} apply almost verbatim for sub-probability measures, except that part (ii) uses $\mu(X) = 1$, but this can be replaced by $\mu(X) = ||\mu||$. The final point that $e(\textsf{SProb}\, X)$ is a Borel set follows from Lemma 2.3 of~\cite{varad}.
\end{proof} 

\begin{theorem}\label{thm:embedcont}
Let $X$ be a Polish space and $\iota\colon X\to D$ a topological embedding of $X$ into a countably based domain $D$. Then the mapping $e\colon \textsf{SProb}\, X\to \textsf{SProb}\, D$ by $e(\mu) = \iota_*\, \mu$ is a topological embedding relative to the weak topologies on $\textsf{SProb}\, X$ and on $\textsf{SProb}\, D$. 
\end{theorem}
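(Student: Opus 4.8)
The plan is to use the structural description of $e$ supplied by Proposition~\ref{prop:prob-embed}: $e$ maps $\textsf{SProb}\,X$ bijectively onto $\mathcal S:=\{\nu\in\textsf{SProb}\,D\mid \nu \text{ concentrated on }\iota(X)\}$, with two-sided inverse $j(\nu)=\nu\circ\iota$. So the theorem will follow once we show (a) $e$ is weakly continuous and (b) $j$ is weakly continuous on $\mathcal S$ with its subspace topology (equivalently, $e$ is open onto $\mathcal S$). An alternative I would note but not pursue in detail: one can reduce to the $\textsf{Prob}$-case by the one-point-extension device from the proof of Theorem~\ref{thm:port}. Replacing $X$ by $X'=X\sqcup\{*\}$ and $D$ by $D'=D\sqcup\{*\}$ with $*$ an isolated point makes $X'$ a Polish space, $D'$ a countably based domain, and $\iota$ extend to a topological embedding $\iota'\colon X'\hookrightarrow D'$; moreover, with the weak topologies, $\textsf{SProb}\,X\cong\textsf{Prob}\,X'$ and $\textsf{SProb}\,D\cong\textsf{Prob}\,D'$, since weak convergence of subprobability measures forces convergence of total mass, which here is exactly the mass at the clopen point $*$. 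The $\textsf{Prob}$-version of the theorem, from~\cite{weaktop}, then transfers.

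For (a), which uses only continuity of $\iota$: for any test function $f$ defining the weak topology on $\textsf{SProb}\,D$ (a bounded continuous, or bounded Scott-lower-semicontinuous, real-valued function on $D$) one has $\int f\,d(e\mu)=\int(f\circ\iota)\,d\mu$, and $f\circ\iota$ is a bounded function of the same type on $X$, so $\mu\mapsto\int(f\circ\iota)\,d\mu$ is weakly continuous on $\textsf{SProb}\,X$ (in the lsc case, lower semicontinuous, which suffices to pull back the subbasic sets $\{\nu\mid\int f\,d\nu>a\}$). Hence $e$ is continuous.

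For (b), the substantive part, I would verify that $j$ is continuous through the Portmanteau Theorem~\ref{thm:port} on the Polish space $X$. Take $\nu_n\to_w\nu$ in $\mathcal S$, put $\mu_n=j(\nu_n)$ and $\mu=j(\nu)$ so that $\nu_n=\iota_*\mu_n$ and $\nu=\iota_*\mu$, and show $\mu_n\to_w\mu$ in $\textsf{SProb}\,X$. Testing against the constant $1$ gives convergence of total masses, $||\mu_n||=\nu_n(D)\to\nu(D)=||\mu||$. For the open-set clause, let $O\subseteq X$ be open; since $\iota$ is a topological embedding, $O=\iota^{-1}(\tilde O)$ for some Scott-open $\tilde O\subseteq D$, whence $\mu_n(O)=\iota_*\mu_n(\tilde O)=\nu_n(\tilde O)$ and $\mu(O)=\nu(\tilde O)$, so $\liminf_n\mu_n(O)\ge\mu(O)$ reduces to $\liminf_n\nu_n(\tilde O)\ge\nu(\tilde O)$. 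The hard part — and the only step that genuinely uses the domain structure of $D$ — will be this last inequality for weak convergence in $\textsf{SProb}\,D$. When $D$ is coherent it is precisely the lower-semicontinuity clause of Theorem~\ref{thm:topvsweak}. For a general countably based domain I would prove it by hand: in a continuous domain $\mathbf 1_{\tilde O}$ is the directed supremum of the bounded Scott-continuous functions $\mathbf 1_{\Ua y_1\cup\cdots\cup\Ua y_k}$ taken over finite subsets $\{y_1,\dots,y_k\}$ of $B_D\cap\tilde O$, so Scott-continuity of the valuation $\nu$ gives $\nu(\tilde O)=\sup_k\int\mathbf 1_{\Ua y_1\cup\cdots\cup\Ua y_k}\,d\nu$; since each integrand is a bounded Scott-continuous test function, weak convergence gives $\liminf_n\nu_n(\tilde O)\ge\int\mathbf 1_{\Ua y_1\cup\cdots\cup\Ua y_k}\,d\nu$ for all $k$, and taking the supremum over $k$ yields the inequality. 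With mass convergence and the open-set lower bound established, Theorem~\ref{thm:port} gives $\mu_n\to_w\mu$, so $j$ is continuous and $e$ is a topological embedding. The remaining checks are routine: the weak topology on $\textsf{SProb}\,X$ is sequential because $X$, being Polish, makes it metrizable; and, exactly as in Proposition~\ref{prop:prob-embed}, none of the arguments borrowed from the probability case secretly uses total mass $1$, so they carry over to subprobability measures.
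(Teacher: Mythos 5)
Your proposal is correct and takes essentially the same route as the paper: show $e$ is weakly continuous, then show the inverse $j$ is continuous on the measures concentrated on $\iota(X)$ by transferring open sets of $X$ to (Scott/Lawson) open sets of $D$ along the embedding and combining the liminf characterization of Theorem~\ref{thm:topvsweak} with the Portmanteau Theorem~\ref{thm:port}. Your additional touches --- checking total-mass convergence before invoking Portmanteau for subprobabilities, and observing that Theorem~\ref{thm:topvsweak} needs coherence of $D$ (the paper's proof cites it even though the statement only assumes a countably based domain), with a hand-made basis argument to cover the general case --- are sensible refinements rather than a different approach.
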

\begin{proof}
We first show $e\colon \textsf{SProb}\, X\to \textsf{SProb}\, D$ is continuous: Let $\mu_n,\mu\in\textsf{SProb}\, X$ with $\mu_n\to_w \mu$. To show $e(\mu_n)\to_w e(\mu)$, let $O\subseteq D$ be Lawson open. Then:
\begin{eqnarray*}
\liminf_n e(\mu_n)(O) &= & \liminf_n \iota\, \mu_n(O)\\ &=& \liminf_n \mu_n(\iota^{-1}(O)) \notag\\
& \geq & \mu(\iota^{-1}(O)) \qquad\qquad \text{by Theorem~\ref{thm:port}(ii)}\\
&=& e(\mu)(O).
\end{eqnarray*}
Then Theorem~\ref{thm:topvsweak}(ii) implies $e(\mu_n)\to_w e(\mu)$. 

For the converse, let $\nu_n\to_w \nu$ in $\textsf{SProb}\, D$ with $\nu_n, \nu$ all concentrated on $\ran e$. Since $\iota\colon X\to D$ is a topological embedding, given an open set $O\subseteq X$, there is some Lawson open $O'\subseteq D$ satisfying $\iota(O) = O'\cap \iota(X)$. If $j\colon \textsf{SProb}\,D\to \textsf{SProb}\, X$ is the restriction map, then
\begin{eqnarray*}
\liminf_n j(\nu_n)(O) & = & \liminf_n \nu_n(\iota(O))\\  
& = & \liminf_n \nu_n(O'\cap \iota(X))\\
& = & \liminf_n \nu_n(O') \qquad\quad\ \text{$\nu_n$ is concentrated on $\iota(X)$}\\
&\geq & \nu(O') \qquad\qquad\qquad\ \text{ by Theorem~\ref{thm:topvsweak}(ii)}\\
&=& j(\nu)(O).
\end{eqnarray*}
It follows by Theorem~\ref{thm:port}(ii) that $j(\nu_n)\to_w j(\nu)$ in \textsf{SProb}$\, X$.
\end{proof}
\begin{remark}
Theorem~\ref{thm:embedcont} is Corollary 4.1 of~\cite{weaktop} extended to the case of sub-probability measures, from the case of probability measures. The proof is identical to the one in~\cite{weaktop}, except the reasoning has been changed to rely on the results we established for \textsf{SProb}.
\end{remark}

\subsection{Bounded complete domains and Skorohod's Theorem}
The connection between domains and Polish spaces involves computational models. A \emph{computational model for a topological space $X$} is a domain $D$ for which there is a topological embedding $X\simeq \text{Max}\, D$ of $X$ as the space of maximal elements of $D$ endowed with the relative Scott topology. As described in the Introduction, this notion emerged from the work of Edalat, who developed the first domain models of spaces arising in real analysis using the domain of compact subsets of the space under reverse inclusion. Later, Lawson~\cite{lawson} showed that the space $\text{Max}\, D$ of maximal elements of a bounded complete countably based domain in the relative Scott topology is a Polish space, and Ciesielski, Flagg and Kopperman~\cite{kopperman,kopperman2} showed that every Polish space has such a model. Finally, Martin~\cite{martin} noted that the space of maximal elements of any countably based, bounded complete domain is a $G_\delta$ in the relative Scott topology. Since these results play a crucial role in our work, we state them formally:

\begin{theorem}\label{thm:polish}(Lawson~\cite{lawson}, Ciesielski, et al.~\cite{kopperman,kopperman2}, Martin~\cite{martin}) 
A space $X$ is representable as $\text{Max}\, D$ in the relative Scott topology, for $D$ a countably based, bounded complete domain, iff $X$ is a Polish space. In such a representation, $X$ is a $G_\delta$--subspace of $D$ in the Scott topology. 
\end{theorem}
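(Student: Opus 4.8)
The statement synthesizes the three cited results, so the plan is to assemble them while supplying the underlying domain-theoretic arguments.

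\emph{From domains to Polish spaces.} Suppose $D$ is a countably based bounded complete domain. Since \BCD-domains are coherent (as noted above), $D$ carries a compact Hausdorff Lawson topology, which, being second countable, is compact metrizable, hence Polish; fix a compatible metric $\rho$. The first point is that the relative Scott and relative Lawson topologies agree on $\text{Max}\, D$, and this is where bounded completeness enters: if $m \in \text{Max}\, D$ and $m \not\geq a$, then $m$ and $a$ have no common upper bound (otherwise $m \vee a$ would lie above the maximal $m$, forcing $m \geq a$), and the set $W_a = \{x \in D \mid \ua x \cap \ua a = \emptyset\}$ is Scott open: it is an upper set, and if $\sup S \in W_a$ for directed $S$ then some $s \in S$ lies in $W_a$, since otherwise $\sup_{s}(s \vee a)$ would be a common upper bound of $\sup S$ and $a$. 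The trace of $W_a$ on $\text{Max}\, D$ is exactly $\{m' \in \text{Max}\, D \mid m' \not\geq a\}$, so every basic Lawson-open set $U \setminus \ua F$ restricts to a relatively Scott-open subset of $\text{Max}\, D$; the reverse inclusion of topologies is trivial.

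\emph{The $G_\delta$ presentation.} Next I would show $\text{Max}\, D = \bigcap_n V_n$ with $V_n = \bigcup\{\Ua b \mid b \in B_D,\ \text{diam}_\rho(\ua b) < 1/n\}$, each $V_n$ being Scott open (recall $\Ua b$ is Scott open in a continuous domain). If $x \in \bigcap_n V_n$ then for each $n$ there is $b_n \ll x$ with $\text{diam}_\rho(\ua b_n) < 1/n$, and since $\ua x \subseteq \ua b_n$, the set $\ua x$ has $\rho$-diameter $0$, hence equals $\{x\}$, so $x$ is maximal. Conversely, for maximal $m$ the family $\{\ua b \mid b \ll m\}$ is a filtered family of Lawson-compact sets with intersection $\ua m = \{m\}$ (using that $\Da m$ is directed with supremum $m$), so by compactness some $\ua b$ fits inside any prescribed Lawson ball about $m$, placing $m$ in every $V_n$. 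Since a $G_\delta$ subspace of a Polish space is Polish, and $(D,\text{Lawson})$ is Polish, $\text{Max}\, D$ with its relative topology --- which coincides with the relative Scott topology by the previous paragraph --- is Polish; the $G_\delta$ claim in the Scott topology is immediate, the $V_n$ being Scott open.

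\emph{From Polish spaces to domains.} Here I would follow Ciesielski, Flagg and Kopperman: embed the given Polish space $X$ as a $G_\delta$ subspace of the Hilbert cube $[0,1]^\omega$ (classical), observe that $[0,1]^\omega = \text{Max}\, D_0$ for $D_0 = (\mathbf{I}[0,1])^\omega$, a countably based bounded complete domain (the interval domain $\mathbf{I}[0,1]$ of compact subintervals under reverse inclusion is one, and countable products preserve countably based bounded completeness), and then realize an arbitrary $G_\delta$ set $X = \bigcap_n U_n \subseteq \text{Max}\, D_0$ as $\text{Max}\, D_1$ for a countably based bounded complete domain $D_1$ assembled from $D_0$ together with Scott-open sets $\widetilde U_n \subseteq D_0$ restricting to $U_n$ on $\text{Max}\, D_0$: one uses suitable descending sequences in $D_0$ forced to enter each $\widetilde U_n$, arranged so that the constant sequences at points of $\bigcap_n U_n$ are the only maximal elements. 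The main obstacle lies precisely here: bounded completeness is fragile, and neither the Lawson closure of $X$ inside $D_0$ nor a naive sub-poset cut out by the $\widetilde U_n$ need be bounded complete, and either can acquire spurious maximal elements. Engineering $D_1$ so that it stays countably based and bounded complete while having maximal-element set exactly $X$ is the delicate part, and is exactly what the cited construction accomplishes; the first direction, by contrast, is essentially bookkeeping once the $G_\delta$ presentation and the Scott-versus-Lawson coincidence are in hand.
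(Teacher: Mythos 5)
The paper itself offers no proof of this theorem: it is imported from the literature, with the equivalence and the $G_\delta$ statement attributed to Lawson, Ciesielski--Flagg--Kopperman and Martin, so your proposal does strictly more than the paper in one direction and exactly the same in the other. Your argument that $\text{Max}\,D$ is Polish and $G_\delta$ is a correct reconstruction of the Lawson/Martin results: the set $W_a=\{x\in D\mid \ua x\cap \ua a=\emptyset\}$ is indeed Scott open in a bounded complete domain (the directedness of $\{s\vee a\mid s\in S\}$ is exactly where bounded completeness enters), and this is the standard route to the coincidence of the relative Scott and Lawson topologies on the maximal points; your sets $V_n$ built from basis elements $b$ with $\mathrm{diam}_\rho(\ua b)<1/n$ are Martin's $G_\delta$ witness. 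The only elided points are routine: the Lawson topology of a countably based bounded complete domain is second countable (hence compact metrizable, which is what licenses the metric $\rho$), and the element $b$ produced by your compactness argument should be interpolated into the countable basis $B_D$; also, in the Hilbert-cube step one needs that the Scott topology of the countable power of the interval domain is the product topology, a standard but citable fact. For the converse you do exactly what the paper does: you describe the strategy (embed $X$ as a $G_\delta$ in $\text{Max}\,(\mathbf{I}[0,1])^\omega$ and re-engineer the model so its maximal points are precisely $X$) and then defer the delicate construction -- keeping the cut-down poset bounded complete and countably based without creating spurious maximal elements -- to Ciesielski--Flagg--Kopperman. As a self-contained proof that deferral is the one genuine gap, and you flag it honestly; measured against the paper, which supplies nothing beyond the citations, your treatment is at least as complete, and the direction you do argue is argued correctly.
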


Our goal is to prove prove Skorohod's Theorem using our results from Section 3. But before we do that, we need one more preparatory result. W.\ M.\ Schmidt was the first to observe that the canonical surjection $\pi\colon C\to [0,1]$ from the Cantor set, $C\simeq 2^\Nat$ sends Haar measure $\nu_C$ to Lebesgue measure. We need the lower adjoint of that projection for our proof.

\begin{proposition}\label{prop:lebesgue} 
Let $C\simeq 2^\Nat$ denote the Cantor set regarded as the countable product of two-point groups. Then the canonical projection $\pi\colon C\to [0,1]$ has a lower adjoint $j\colon [0,1]$ satisfying $j([0,1]) = C\setminus KC$, and $j\, \lambda = \nu_C$,  where $\lambda$ denotes Lebesgue measure.
\end{proposition}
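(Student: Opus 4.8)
The plan is to describe the canonical surjection $\pi \colon C \to [0,1]$ explicitly and then use the general theory of Galois adjunctions from Section~\ref{subsec:domains} to obtain $j$ as its lower adjoint, finally checking the two claimed properties by direct computation on a convenient generating family of sets. Concretely, I identify $C \simeq 2^\Nat$ with the lexicographically ordered product and recall that $\pi$ sends a binary sequence $(a_n)_n$ to the real number $\sum_n a_n 2^{-(n+1)}$; this map is onto, monotone, and preserves arbitrary suprema and infima, because sups and infs in $2^\Nat$ and in $[0,1]$ are both computed coordinatewise on the relevant initial segments and the series defining $\pi$ converges uniformly. Since $\pi$ preserves all infima and both spaces are complete lattices, the results quoted in Section~\ref{subsec:domains} give a unique lower adjoint $j \colon [0,1] \to C$ with $j(r) = \inf \pi^{-1}(\ua r)$, and $\pi \circ j \geq 1_{[0,1]}$, $j \circ \pi \leq 1_C$.

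Next I would pin down the image of $j$. The compact elements $KC$ of $C$ (in the relevant order on $2^\Nat$) are exactly the sequences that are eventually $0$ — equivalently, the two binary expansions of the dyadic rationals that terminate — and these are precisely the points where $\pi$ fails to be injective: $\pi^{-1}(d)$ for a dyadic $d$ is a doubleton, while $\pi^{-1}(r)$ is a singleton for non-dyadic $r$. Computing $j(r) = \inf \pi^{-1}(\ua r)$ shows that for non-dyadic $r$, $j(r)$ is the unique preimage, while for dyadic $r$, $j(r)$ picks out the non-terminating expansion (the one that is eventually $1$), since that is the $\inf$ over the up-set. Hence $\ran j$ consists of exactly the sequences that are not eventually $0$, i.e. $\ran j = C \setminus KC$; I would verify this is exactly the set of $c$ with $j(\pi(c)) = c$, so that $j$ is a section of $\pi$ onto $C \setminus KC$.

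For the measure-theoretic claim $j_*\lambda = \nu_C$, I would argue on the standard basis of $C$ given by cylinder sets $[w] = \{c \in C \mid c \text{ extends the finite word } w\}$, on which $\nu_C([w]) = 2^{-|w|}$. For a finite word $w$ of length $n$, the set $[w] \cap (C \setminus KC)$ is mapped by $\pi$ onto a half-open dyadic interval of length $2^{-n}$, and $j^{-1}([w])$ is, up to the $\nu_C$-null (indeed $\lambda$-null) set of dyadic endpoints, exactly that interval; hence $j_*\lambda([w]) = \lambda(j^{-1}([w])) = 2^{-n} = \nu_C([w])$. Since the cylinder sets generate the Borel $\sigma$-algebra on $C$ and form a $\pi$-system, the two Borel measures $j_*\lambda$ and $\nu_C$ agree, so $j_*\lambda = \nu_C$. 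The main obstacle is the bookkeeping at dyadic points: one must be careful that the two measures are being compared as Borel measures (not merely as valuations on Scott-open sets), that the countably many dyadic rationals and their terminating expansions form a null set on both sides, and that the adjoint $j$ is genuinely Borel measurable — this last point follows because $j$ is monotone with at most countably many jump discontinuities (at dyadic rationals), hence Borel.
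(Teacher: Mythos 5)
Your proposal is correct, but it reaches the measure identity by a different route than the paper. The paper treats the structural facts (that $\pi$ preserves all sups and infs, hence has a lower adjoint $j$ with $j([0,1]) = C\setminus KC$, and that $\pi\vert_{C\setminus KC}$ and $j$ form a Borel isomorphism) as imported from the cited reference, and then deduces $j_*\lambda = \nu_C$ by a short change-of-variables computation that leans on the already-known fact $\pi_*\nu_C = \lambda$ together with $\nu_C(KC)=0$. You instead build everything by hand: you describe $\pi$ explicitly, compute $j(r)=\inf\pi^{-1}(\ua r)$ (identifying it with the non-terminating binary expansion at dyadic points), identify $KC$ with the eventually-$0$ sequences, and then verify $j_*\lambda=\nu_C$ directly on cylinder sets via the $\pi$-system uniqueness theorem, without ever invoking $\pi_*\nu_C=\lambda$ — indeed your argument recovers that fact as a byproduct, since $\pi\circ j = \mathrm{id}_{[0,1]}$ gives $\pi_*\nu_C = \pi_*j_*\lambda = \lambda$. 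The paper's route is shorter and makes transparent that the two push-forward statements are equivalent modulo the null set $KC$; yours is self-contained and additionally settles the Borel measurability of $j$ (monotone with countably many discontinuities), which the paper leaves to the reference. Two small points to tidy: your justification that $\pi$ preserves arbitrary sups and infs ("computed coordinatewise") is loose for the lexicographic order and deserves a sentence of its own, and $j(0)=000\cdots$ is the bottom element, which is compact, so strictly $\ran j = (C\setminus KC)\cup\{\perp\}$; this is a measure-zero wrinkle already present in the statement as formulated, so it does not affect the conclusion.
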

\begin{proof}
For a proof that $\pi\, \nu_C = \lambda$, see~\cite{brian-misl}, which also has an extensive discussion of related results, including the following. First, $\pi$ preserves all sups and all infs, so, in particular, it has a lower adjoint $j\colon [0,1]\to C$ preserving all suprema. Then $j([0,1]) = C\setminus KC$, where $KC$ is the set of compact elements, which is countable, so $\nu_C(KC) = 0$. The pair of maps, $\pi\vert_{C\setminus KC}$ and $j$, form a Borel isomorphism. 

We claim $j\, \lambda = \nu_C$: Indeed, if $A\subseteq C$ is a Borel set, then 
\begin{eqnarray*}
j\, \lambda(A) = \lambda(j^{-1}(A)) &=& \lambda(j^{-1}(A\setminus KC))\\ &=&\pi\, \nu_C(j^{-1}(A\setminus KC))\\ &=& \nu_C( \pi^{-1}\circ j^{-1} (A\setminus KC))\\ &=&  \nu_C(A\setminus KC) = \nu_C(A),
\end{eqnarray*} 
since $\nu_C(KC) = 0$.
\end{proof}

A \emph{stochastic process on a measure space $(S,\Sigma_S)$} is a family $\{ X_t\mid t\in T\subseteq \Re+\}$ of random variables $X_t\colon (P,\Sigma_P,\nu)\to (S,\Sigma_S)$, where $(P,\Sigma_P, \nu)$  is a probability space. It's often assumed that $(S,\Sigma_S)$ is a Polish space, in which case $\textsf{Prob}\, S$ also is Polish in the weak topology. The push forward measure $X_{t *}\, \nu\in \textsf{Prob}\, S$ is called the \emph{law of $X_t$}, and  a natural question is the convergence properties of the family $\{X_{t *}\, \nu\mid t\in T\}\subseteq \textsf{Prob}\, S$. Since $\textsf{Prob}\, S$ is Polish, convergence can be defined using sequences, and it's obvious that if $X_{t_n}\to X_t$ a.e. on $X$, then $X_{t_n *}\, \nu \to_w X_{t *}\, \nu$ in $\textsf{Prob}\, S$. Skorohod's Theorem not only provides a converse to this observation, it also shows the probability space $(P,\Sigma_P,\nu)$ can be assumed to be the unit interval with Lebesgue measure, $\lambda$:

\begin{theorem}\label{thm:skor} (Skorohod's Theorem~\cite{skorohod}) 
If $P$ is a Polish space and $\mu\in\textsf{Prob}\, P$, then there is a random variable $X\colon [0,1]\to P$ satisfying $X_*\,\lambda = \mu$. 

Moreover, if $\mu_n, \mu\in \textsf{Prob}\,P$ satisfy $\mu_n\to_w \mu$ in the weak topology, then the random variables $X,X_n\colon [0,1]\to P$ can be chosen so that $X_*\,\lambda = \mu$ and $X_{n *}\,\lambda = \mu_n$ also satisfy $X_n\to X$ a.s.~wrt Lebesgue measure. 
\end{theorem}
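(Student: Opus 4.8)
The plan is to reduce Skorohod's Theorem to Theorem~\ref{thm:mapping} by passing to a computational model of $P$, and then to pull everything back to the unit interval via the lower adjoint of Proposition~\ref{prop:lebesgue}. First I would invoke Theorem~\ref{thm:polish} to fix a countably based bounded complete domain $D_P$ together with a homeomorphism $\iota\colon P\to\text{Max}\,D_P$, where $\text{Max}\,D_P$ carries the relative Scott topology and is a $G_\delta$, hence Borel, subset of $D_P$. Since $\text{BCD}\subseteq\RB$, the domain $D_P$ is an \RB\ domain, so the embedding results of this section apply: by Proposition~\ref{prop:prob-embed} the map $e(\nu)=\iota_*\,\nu$ sends $\textsf{Prob}\,P$ onto $\{\nu\in\textsf{Prob}\,D_P\mid \nu\text{ concentrated on }\iota(P)\}$, and by Theorem~\ref{thm:embedcont} together with Corollary~\ref{cor:topvsweak} this is a topological embedding for the weak $=$ Lawson topology. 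In particular, $\mu_n\to_w\mu$ in $\textsf{Prob}\,P$ yields $e(\mu_n)\to_w e(\mu)$ in $\textsf{Prob}\,D_P$.

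Next I would apply Theorem~\ref{thm:mapping} to the \RB\ domain $D_P$ with the measures $e(\mu),e(\mu_n)$, obtaining Scott-continuous maps $X,X_n\colon\CT\to D_P$ with $X_*\,\nu_\C=e(\mu)$, $X_{n*}\,\nu_\C=e(\mu_n)$, with $X_n\to X$ pointwise in the Scott topology, and with $X_n(x)\to X(x)$ in the Lawson topology whenever $X(x)\in\text{Max}\,D_P$. Using Proposition~\ref{prop:lebesgue}, I would fix the lower adjoint $j\colon[0,1]\to\C$ of the canonical surjection, a Borel isomorphism onto $\C\setminus K\C$ with $j_*\,\lambda=\nu_\C$, and put $\widetilde X=X\circ j$ and $\widetilde X_n=X_n\circ j$. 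These are Borel (Scott-continuous maps are Borel and $j$ is Borel), and since $j_*\,\lambda=\nu_\C$ we get $\widetilde X_*\,\lambda=X_*\,\nu_\C=e(\mu)$ and $\widetilde X_{n*}\,\lambda=e(\mu_n)$. Because $e(\mu)$ and every $e(\mu_n)$ are concentrated on $\text{Max}\,D_P=\iota(P)$, and a countable union of null sets is null, there is a Borel set $N\subseteq[0,1]$ with $\lambda(N)=0$ such that $\widetilde X(r),\widetilde X_1(r),\widetilde X_2(r),\dots$ all lie in $\iota(P)$ for every $r\notin N$. Define $Y(r)=\iota^{-1}(\widetilde X(r))$ and $Y_n(r)=\iota^{-1}(\widetilde X_n(r))$ for $r\notin N$, with an arbitrary fixed value on $N$; these are random variables $[0,1]\to P$, and $Y_*\,\lambda=\iota^{-1}_*\,e(\mu)=\mu$ and $Y_{n*}\,\lambda=\mu_n$, which gives the first assertion.

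Finally, for the almost sure convergence, observe that for $r\notin N$ we have $\widetilde X(r)\in\text{Max}\,D_P$, so Theorem~\ref{thm:mapping}(iii) gives $\widetilde X_n(r)\to\widetilde X(r)$ in the Lawson topology on $D_P$; since all of these points lie in the subspace $\text{Max}\,D_P$, the convergence also holds in the relative Lawson topology, which on $\text{Max}\,D_P$ agrees with the relative Scott topology because $D_P\in\RB$ (noted in the paragraph preceding Subsection~\ref{sec:probpower}). As $\iota$ is a homeomorphism for the relative Scott topology, applying $\iota^{-1}$ yields $Y_n(r)\to Y(r)$ in $P$ for every $r\notin N$, i.e.\ $Y_n\to Y$ $\lambda$-almost surely.

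The step I expect to be the main obstacle is precisely this last transfer of convergence. Theorem~\ref{thm:mapping} in general only guarantees pointwise Scott convergence, and Scott convergence in $D_P$ does not descend to the subspace $P$; the argument genuinely relies on clause~(iii) of that theorem together with the coincidence of the relative Scott and relative Lawson topologies on the maximal elements of an \RB\ domain, plus the bookkeeping ensuring that for $\lambda$-almost every $r$ the values $\widetilde X(r)$ and all $\widetilde X_n(r)$ avoid the bad set $D_P\setminus\text{Max}\,D_P$ where either this coincidence or the embedding could fail. The remaining details (measurability of $Y,Y_n$ after the null-set modification, and the push-forward identities) are routine.
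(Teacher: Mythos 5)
Your proposal is correct and follows essentially the same route as the paper's own proof: pass to a computational model $D_P$ via Theorem~\ref{thm:polish}, transfer the measures with $e$, apply Theorem~\ref{thm:mapping}, restrict to the ($\lambda$-full) set where values land in $\text{Max}\,D_P$, and precompose with the lower adjoint $j$ from Proposition~\ref{prop:lebesgue}. If anything, your null-set bookkeeping and the explicit use of clause~(iii) together with the coincidence of the relative Scott and Lawson topologies on $\text{Max}\,D_P$ spell out steps the paper leaves implicit.
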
 

\begin{proof} 
Since $P$ is Polish, Theorem~\ref{thm:polish} implies there is a bounded complete domain $D_P$ with a countable base and an embedding $e\colon P\to \text{Max}\, D$ and $e(P)$ is a $G_\delta$ subset of $D_P$. Then $\textsf{Prob}\, e = e_*\colon \textsf{Prob}\, P\to \textsf{Prob}\, D_P$ is an embedding. Theorem~\ref{thm:mapping} then implies there are Scott-continuous maps $X',X_n'\colon \CT\to D_P$ satisfying $X_*'\, \nu_\C = \mu, X_{n *}'\, \nu_\C = \mu_n$, and $X_n'\to X'$ is the Scott topologies. But since $\mu, \mu_n\in \text{Max}\, D_P$, it follows that $X_n'\to X'$ in the Lawson topologies. 

If we restrict the mappings $X', X_n'$ to those $x\in \text{Max}\, \CT$ with $X'(x),X_n'(x)\in \text{Max}\,{D}$, and then precompose with $j$, we then have random variables $X_n = X_n' \circ j, X = j\circ X'\colon [0,1]\to P$ as desired: $X_{n *}\, \lambda = X_{n *}'\, \nu_C = \mu_n, X_*\, \lambda = X_*'\, \nu_C = \mu$, and $X_n\to X$ a.s. on $[0,1]$ wrt $\lambda$. 
\end{proof}

\begin{remark} A \emph{standard Borel space} is a measurable space for which there is a Borel isomorphism onto a Borel subset of a Polish space~\cite{kechr}. An obvious example is the unit interval. A \emph{standard probability space} is then a probability space $P,\Sigma_P,\nu)$ that is 
isomorphic mod$\, 0$ to the unit interval equipped with Lebesgue measure, where ``isomorphic mod$\,0$" means there are Borel sets $A\subseteq P$ and $B\subseteq [0,1]$ both of measure 0 so that $P\setminus A$ is Borel isomorphic to $[0,1]\setminus B$.  This leads to two comments:
\begin{enumerate}
\item The thrust of Skorohod's Theorem is that the domain for any stochastic process whose range is a Polish space can be assumed to be a standard probability space. Some statements of the theorem simply state it that way, without specifying which standard probability space is being used. But most often, the standard space is assumed to be the unit interval with Lebesgue measure.
\item  Any two standard Borel spaces that are uncountable are Borel isomorphic (cf.~\cite{kechr}). Clearly $\C$ is such a space, as is the canonical example, $[0,1]$. So we could have used $\C$ in Theorem~\ref{thm:skor} instead of $[0,1]$.
\end{enumerate}
\end{remark}

Finally, just as with Theorem~\ref{thm:mapping}, on which Theorem~\ref{thm:skor} relies, this result extends \emph{verbatim} to subprobability measures. T
\begin{corollary}\label{cor:skor}
If $P$ is a Polish space and $\mu$ is a subprobability measure on $P$, then there is a random variable $X\colon (0,1]\to P$ satisfying $X_*\,\lambda = \mu$. 

Moreover, if $\mu_n, \mu\in \textsf{SProb}\,P$ satisfy $\mu_n\to_w \mu$ in the weak topology, then the random variables $X,X_n\colon (0,1]\to P$ can be chosen so that $X_*\,\lambda = \mu$ and $X_{n *}\,\lambda = \mu_n$ also satisfy $X_n\to X$ a.s.~wrt Lebesgue measure. 
\end{corollary}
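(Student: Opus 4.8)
The plan is to reduce Corollary~\ref{cor:skor} to Skorohod's Theorem (Theorem~\ref{thm:skor}) by the same device used in the proof of the Portmanteau Theorem~\ref{thm:port}: adjoin a single isolated point to turn sub-probability measures into honest probability measures. First I would fix the Polish space $P$, normalize its metric so that $\mathrm{diam}\,P < 1$, and form $P' = P \stackrel{\cdot}{\cup}\{*\}$ with $d(*,x) = 1$ for all $x\in P$; then $P'$ is Polish, $P$ is clopen in $P'$, and the map $e\colon \textsf{SProb}\,P \to \textsf{Prob}\,P'$, $e(\mu) = \mu + (1-\mu(P))\delta_*$, is a topological embedding onto the set of probability measures on $P'$ concentrated on $P\cup\{*\}$ (indeed onto all of $\textsf{Prob}\,P'$, since every probability measure on $P'$ has this form). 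Crucially $e$ is a homeomorphism for the weak topologies, so $\mu_n \to_w \mu$ in $\textsf{SProb}\,P$ iff $e(\mu_n)\to_w e(\mu)$ in $\textsf{Prob}\,P'$; this is exactly the argument already invoked in the proof of Theorem~\ref{thm:port}, together with Theorem~\ref{thm:embedcont} applied to the inclusion of $P$ into $P'$.

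Next I would apply Theorem~\ref{thm:skor} to $P'$: given $\mu\in\textsf{SProb}\,P$, there is a random variable $Y\colon [0,1]\to P'$ with $Y_*\,\lambda = e(\mu)$, and given a weakly convergent sequence $\mu_n\to_w\mu$ there are $Y,Y_n\colon [0,1]\to P'$ with $Y_*\,\lambda = e(\mu)$, $Y_{n*}\,\lambda = e(\mu_n)$, and $Y_n\to Y$ $\lambda$-a.s. Now let $N = Y^{-1}(\{*\})$, a Borel subset of $[0,1]$ with $\lambda(N) = e(\mu)(\{*\}) = 1-\mu(P) = \|\mu\| - 1$... rather, $\lambda(N) = 1 - \mu(P)$, the ``missing mass.'' Define $X = Y|_{[0,1]\setminus N}\colon [0,1]\setminus N \to P$; this is a partial random variable whose domain has full Lebesgue measure exactly when $\mu$ is a probability measure, and in general $X_*\,\lambda$ as a measure on $P$ satisfies $X_*\,\lambda(A) = \lambda(Y^{-1}(A)) = e(\mu)(A) = \mu(A)$ for Borel $A\subseteq P$, so $X_*\,\lambda = \mu$ as sub-probability measures on $P$. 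To match the statement's formulation with domain $(0,1]$, I would note that $[0,1]\setminus N$ is a standard Borel space of full outer measure and, since removing a null set and reparametrizing does not change the law, one can transport $X$ along a measure-preserving Borel isomorphism $(0,1]\to ([0,1]\setminus N)\cup(\text{a null set})$; alternatively, absorb $N$ into a fixed null set by the standard-Borel-space flexibility noted in the Remark following Theorem~\ref{thm:skor}.

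For the convergence clause, set $N_n = Y_n^{-1}(\{*\})$ and $X_n = Y_n|_{[0,1]\setminus N_n}$, so $X_{n*}\,\lambda = \mu_n$ by the same computation. Since $Y_n\to Y$ $\lambda$-a.s. on $[0,1]$ and $P$ is clopen in $P'$, for $\lambda$-a.e.\ $t$ with $Y(t)\in P$ we have $Y_n(t)\in P$ eventually and $Y_n(t)\to Y(t)$ in $P$; thus $X_n\to X$ a.s.\ on the common domain of definition (which again has full measure in the probability-measure case, and in general on $[0,1]\setminus(N\cup\bigcup_n N_n)$ after discarding the a.s.-exceptional set, which is still where the relevant mass lives). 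Rephrasing everything over $(0,1]$ as above completes the argument.

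I expect the only real subtlety to be bookkeeping around the domain of the resulting random variable --- the statement writes $X\colon (0,1]\to P$, and one must be careful that "the random variable with law $\mu$" for a genuine sub-probability measure is naturally a partial map whose domain has measure $\mu(P) < 1$; identifying $(0,1]$ (or $[0,1]$ minus a null set) with this domain via a measure-preserving Borel isomorphism is where a little care is needed, but this is exactly the kind of standard-Borel-space manipulation already used freely in the Remark following Theorem~\ref{thm:skor} and in Corollary~\ref{cor:mapping}. Everything else is a direct transfer through the clopen embedding $e$, mirroring the proof of Theorem~\ref{thm:port} and of Corollary~\ref{cor:mapping}, which is why the corollary holds "\emph{verbatim}."
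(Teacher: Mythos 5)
Your argument is correct in substance, but it takes a genuinely different route from the paper's. You work entirely at the Polish-space level: adjoin an isolated point $*$ to get $P' = P\,\dot\cup\,\{*\}$, push $\mu$ to $e(\mu)=\mu+(1-\mu(P))\delta_*\in\textsf{Prob}\,P'$, invoke Theorem~\ref{thm:skor} for $P'$, and restrict the resulting random variables to the preimage of $P$. The paper instead stays at the domain level: its proof is the one-liner ``apply Corollary~\ref{cor:mapping} and precompose with the lower adjoint $j\colon(0,1]\to\C$,'' where Corollary~\ref{cor:mapping} performs the same one-point trick you use, but with the lift $D_\perp$ and $\delta_\perp$ rather than with an added isolated point of $P$. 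Your route has the virtue of being a purely classical reduction that reuses the already-established Skorohod theorem together with the Portmanteau device of Theorem~\ref{thm:port}; the paper's route keeps the subprobability statement aligned with its domain-level analogue and obtains the passage from $\C$ to $(0,1]$ directly from Proposition~\ref{prop:lebesgue}. Both yield the same object in the end: a partial random variable defined off the set where the auxiliary map hits the added point (respectively, off $C_f$), and your use of clopenness of $P$ in $P'$ to get a.s.\ convergence on the common domain is sound.

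One step of your bookkeeping is wrong as written, though it is also unnecessary. When $\mu(P)<1$, the set $N=Y^{-1}(\{*\})$ has $\lambda(N)=1-\mu(P)>0$, so there is no measure-preserving Borel isomorphism from $(0,1]$ onto $([0,1]\setminus N)\cup(\text{a null set})$, and $N$ cannot be ``absorbed into a fixed null set''; a total map $X\colon(0,1]\to P$ always has $X_*\lambda(P)=1$, so for a strict subprobability measure the conclusion simply cannot be realized by a total map. The corollary has to be read, exactly as in Corollary~\ref{cor:mapping} and as the paper's own proof produces (partial maps $f\colon\CT\setminus C_f\to D$ precomposed with $j$), as asserting a random variable defined on a Borel subset of $(0,1]$ of measure $\mu(P)$. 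Your $X=Y|_{[0,1]\setminus N}$, intersected with $(0,1]$, is already precisely such a map with $X_*\lambda=\mu$, so you should drop the transport step rather than try to repair it.
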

\begin{proof}
Apply Corollary~\ref{cor:mapping}, and precede the mappings defined there with the lower semicontinuous embedding $j\colon (0,1] \to \C$. 
\end{proof}

\section{Summary and Future Work}
In this paper we have developed a domain-theoretic approach to random variables. Our main results establish some standard results in probability theory using the Cantor tree as a domain, in which measurable maps from the Cantor set are approximated by Scott-continuous maps defined on the tree. We have recast Skorohod's Theorem in domain-theoretic terms, and we've also extended the theorem to subprobability measures using our approach, both for domains and for the classic case of Polish spaces. We also presented a direct proof that the sub-probability measures and the probability measures on a complete chain form a continuous lattice. This result offers the first new insight in over two decades to the domain structure of $\SPr{D}$ and $\Pr{D}$, the last such results having appeared in~\cite{jungtix}.

There are a number of interesting questions to be explored. Finding further results from random variables that can be obtained using the techniques presented here is an obvious issue. Another question we have been exploring is the potential use of the disintegration theory for product measures, in order to understand the domain structure of $\SPr{(D\times E)}$, in the case $D$ and $E$ are chains. In particular, we'd like to know if we can use the fact that  $\SPr{D}$ and $\SPr{E}$ are continuous lattices to derive some insight into the domain structure of $\SPr{(D\times E)}$. Our first idea -- that this family of measures also would be a lattice -- is debunked by a simple example in~\cite{jones}, so more subtle issues are at play here. Last, we're interested in investigating the potential application of our ideas to probabilistic programming semantics and Bayesian inference.
\begin{acknowledgement}
This work began while the author was a participant in the Logical Methods in Computation Program at the Simons Institute for the Theory of Computing in Fall, 2016. The author thanks the institute and its staff for providing an excellent environment for research.  
The author also wishes to acknowledge the support of the US AFOSR during the preparation of this work.
\end{acknowledgement}
\bibliographystyle{plain}

\end{document}